\documentclass[12pt]{amsart}
\usepackage{amsmath}
\usepackage{eucal}
\usepackage{amssymb}
\usepackage[all]{xy}
\usepackage{longtable}
\usepackage[osf,sc]{mathpazo}
\usepackage[titletoc]{appendix}
\usepackage{color}
\usepackage{multirow,bigdelim}
\usepackage{stmaryrd}
\usepackage{verbatim}
\usepackage{mathrsfs}

\allowdisplaybreaks

\setlength{\topmargin}{0truein} \setlength{\headheight}{.35truein}
\setlength{\headsep}{.25truein} \setlength{\textheight}{9.25truein}
\setlength{\footskip}{.25truein} \setlength{\oddsidemargin}{0truein}
\setlength{\evensidemargin}{0truein}
\setlength{\textwidth}{6.5truein} \setlength{\voffset}{-0.625truein}
\setlength{\hoffset}{0truein}

\newtheorem{theorem}[equation]{Theorem}
\newtheorem{lemma}[equation]{Lemma}
\newtheorem{proposition}[equation]{Proposition}
\newtheorem{corollary}[equation]{Corollary}

\newtheorem{definition}[equation]{Definition}

\newtheorem{claim}[equation]{Claim}

\theoremstyle{remark}
\newtheorem{remark}[equation]{Remark}

\numberwithin{equation}{subsection}

\allowdisplaybreaks[1]


\newcommand{\DD}{\mathbb{D}}

\newcommand{\bu}{\mathbf{u}}

\DeclareMathAlphabet{\matheur}{U}{eur}{m}{n}

\newcommand{\fs}{\mathfrak{s}}

\newcommand{\DDConv}{\DD^{\text{\fontsize{6pt}{0pt}\selectfont \rm{Conv}}}}
\newcommand{\DDDef}{\DD^{\text{\fontsize{6pt}{0pt}\selectfont \rm{Def}}}}

\DeclareMathOperator{\Aut}{Aut} 
\DeclareMathOperator{\Ker}{Ker} 
\DeclareMathOperator{\Mat}{Mat}

 \DeclareMathOperator{\wt}{wt}
  
\DeclareMathOperator{\Li}{Li}    

\DeclareMathOperator{\ad}{ad}

\DeclareMathOperator{\dep}{dep}

\newcommand{\tr}{\mathrm{tr}}

\begin{document}

\title[]{{\large{A\MakeLowercase{ $v$-adic variant of }A\MakeLowercase{nderson}-B\MakeLowercase{rownawell}-P\MakeLowercase{apanikolas linear independence criterion and its application}}}}

\author{Yen-Tsung Chen}
\address{Department of Mathematics, Pennsylvania State University, University Park, PA 16802, U.S.A.}

\email{ytchen.math@gmail.com}

\thanks{}

\subjclass[2010]{Primary 11R58, 11M32}

\date{\today}

\begin{abstract} 
    For a finite place $v$ of the rational function field $k$ over a finite field, we follow the approach in \cite[Sec.~3]{ABP04} closely to formulate and prove a $v$-adic linear independence criterion. Let $\overline{k}$ be a fixed algebraic closure of $k$. When the finite place $v$ is of degree one, we show that all $\overline{k}$-linear relations among $v$-adic Carlitz multiple polylogarithms at algebraic points arise from $k$-linear relations among these values of the same weight. As an application, we establish a function field analogue of Furusho-Yamashita's conjecture for $v$-adic multiple zeta values whenever the degree of the place $v$ is one.
\end{abstract}

\keywords{}

\maketitle

\tableofcontents

\section{Introduction}
    \subsection{Motivation}
    For an $n$-tuple $\mathfrak{n}=(n_1,\dots,n_r)\in\mathbb{Z}_{>0}^r$, the $\mathfrak{n}$-th multiple polylogarithm is defined by the following series
    \[
        \mathscr{L}_{\mathfrak{n}}(z_1,\dots,z_r):=\sum_{m_1>\cdots>m_r>0}\frac{z_1^{n_1}\cdots z_r^{n_r}}{m_1^{n_1}\cdots m_r^{n_r}}\in\mathbb{Q}\llbracket z_1,\dots,z_r\rrbracket.
    \]
    As a complex analytic function, it converges when $|z_i|<1$ for each $1\leq i\leq r$ and $z_i\in\mathbb{C}$. Since $\mathscr{L}_1(z)=-\log(1-z)$, the study of linear independence among multiple polylogarithms at algebraic points dates back to Baker's celebrated theorem on linear forms in logarithms, which asserts that given any non-zero algebraic numbers $\alpha_1,\dots,\alpha_n\in\overline{\mathbb{Q}}$ such that the quantities $\log(\alpha_1),\dots,\log(\alpha_n)$ are linearly independent over $\mathbb{Q}$, then $1,\log(\alpha_1),\dots,\log(\alpha_n)$ are linearly independent over $\overline{\mathbb{Q}}$. Several results along this direction were established by using Diophantine approximation or tools from transcendence theory. It is remarkable that if $n_1>1$ then the special value of the $\mathfrak{n}$-th multiple polylogarithm at $z_1=\cdots=z_r=1$ recovers the \emph{multiple zeta values}
    \[
        \zeta(n_1,\dots,n_r):=\sum_{m_1>\cdots>m_r>0}\frac{1}{m_1^{n_1}\cdots m_r^{n_r}}\in\mathbb{R}^\times.
    \]
    Note that this value generalizes the Riemann zeta function at positive integers, and have been investigated in recent decades due to its interesting properties and connections to various topics. We refer reader to \cite{BGF19,Zha16} for more details.

    Let $\mathbb{F}_q$ be the finite field with $q$ elements, for $q$ a power of a prime number $p$. Let $A:=\mathbb{F}_q[\theta]$ be the polynomial ring with variable $\theta$ and $k:=\mathbb{F}_q(\theta)$ be its field of fractions. Let $|\cdot|_\infty$ be the normalized non-archimedean absolute value on $k$ so that $|f/g|_\infty:=q^{\deg_{\theta}(f)-\deg_{\theta}(g)}$ for     $f,g\in A$ with $g\neq 0$. We denote by $k_\infty$ the {$\infty$-adic} completion of $k$ with respect to $|\cdot|_\infty$. We further set $\mathbb{C}_\infty$ to be the completion of a fixed algebraic closure $\overline{k_\infty}$ and we fix $\overline{k}$ to be the algebraic closure of $k$ inside $\mathbb{C}_\infty$.
    
    In the case of global function field in positive characteristic, Chang \cite{Cha14} introduced the \emph{Carlitz multiple polylogarithms}, abbreviated as CMPLs, generalizing the notion of \emph{Carlitz polylogarithms} studied by Anderson and Thakur in \cite{AT90}.
    To be more precise, let $L_0:=1$ and $L_i:=(\theta-\theta^q)\cdots(\theta-\theta^{q^i})$ for any $i\geq 1$. For an index $\fs=(s_{1},\dots,s_{r})\in \mathbb{Z}_{>0}^{r}$, the $\fs$-th CMPL is defined by the following series (see~\cite{Cha14}):
    \begin{equation}\label{Def:CMPL}
        \Li_\fs(z_1,\dots,z_r):=\underset{i_1>\cdots>i_r\geq 0}{\sum}\frac{z_1^{q^{i_1}}\dots z_r^{q^{i_r}}}{L_{i_1}^{s_1}\cdots L_{i_r}^{s_r}}\in k\llbracket z_1,\cdots,z_r \rrbracket.
    \end{equation}
    Note that $\Li_{1}(z)=\log_C(z)$ is the \emph{Carlitz logarithm}. Its study was initiated by Carlitz in \cite{Car35}. The weight and the depth of the presentation of $\Li_\fs(z_1,\dots,z_r)$ are defined by $\wt(\fs):=s_1+\cdots+s_r$ and $\dep(\fs):=r$ respectively.
    
    Note that $\Li_\fs(z_1,\dots,z_r)$ converges $\infty$-adically on the set (see \cite[Rem.~5.1.4]{Cha14})
    \begin{equation}\label{Def:Conv._Domain_infty}
        \DDConv_{\fs,\infty}:=\{(z_1,\dots,z_r)\in\mathbb{C}_\infty^r\mid |z_i|_\infty<q^{\frac{s_iq}{q-1}}\}.
    \end{equation}
    Let $\mathcal{L}$ be the $k$-vector space spanned by $1$ and $\Li_\fs(\bu)$ with $\bu\in(\overline{k}^\times)^r\cap\DDConv_{\fs,\infty}$. 
    For $w\geq 1$, let $\mathcal{L}_w$ be the $k$-vector space spanned by $\Li_\fs(\bu)$ with $\bu\in(\overline{k}^\times)^r\cap\DDConv_{\fs,\infty}$ and $\wt(\fs)=w$. It is known that $\mathcal{L}$ forms a $k$-algebra with the fact that $\mathcal{L}_{w_1}\mathcal{L}_{w_2}\subset\mathcal{L}_{w_1+w_2}$ by the stuffle relations (see \cite[Section~5.2]{Cha14}). 
    A linear independence result of CMPLs at algebraic points was established by Chang \cite{Cha14}.
    \begin{theorem}[{\cite[Thm.~5.4.3]{Cha14}}]\label{Thm:Cha14_CMPLs}
        Let $\overline{\mathcal{L}}$ be the $\overline{k}$-algebra generated by $\Li_\fs(\bu)$ with $\bu\in(\overline{k}^\times)^r\cap\DDConv_{\fs,\infty}$ and for $w\geq 1$ let $\overline{\mathcal{L}}_{w}$ be the $\overline{k}$-vector space spanned by $\Li_\fs(\bu)$ of weight $w$ with $\bu\in(\overline{k}^\times)^r\cap\DDConv_{\fs,\infty}$. The following assertions hold.
        \begin{enumerate}
            \item $\overline{\mathcal{L}}=\overline{k}\oplus\left(\oplus_{w\geq 1}\overline{\mathcal{L}}_{w}\right)$ forms a graded $\overline{k}$-algebra.
            \item The canonical map $\overline{k}\otimes_k\mathcal{L}\to \overline{\mathcal{L}}$ is a bijection.
        \end{enumerate}
    \end{theorem}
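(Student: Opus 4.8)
The surjectivity in part~(2) and the product part of~(1) are immediate: by definition $\overline{\mathcal{L}}$ is generated over $\overline{k}$ by $1$ and the values $\Li_{\fs}(\bu)$, all of which lie in $\mathcal{L}$, so $\overline{k}\otimes_k\mathcal{L}\to\overline{\mathcal{L}}$ is onto, and the stuffle relations of \cite[Sec.~5.2]{Cha14} give $\overline{\mathcal{L}}_{w_1}\overline{\mathcal{L}}_{w_2}\subseteq\overline{\mathcal{L}}_{w_1+w_2}$ exactly as for $\mathcal{L}$. Everything else follows from a single statement: \emph{the $\overline{k}$-vector space of linear relations among $1$ and any finite set of values $\Li_{\fs_1}(\bu_1),\dots,\Li_{\fs_n}(\bu_n)$ is spanned by relations that have coefficients in $k$ and are homogeneous with respect to weight}. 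Indeed, taking coefficients in $k$ this already gives $\mathcal{L}=k\oplus\bigl(\bigoplus_{w\geq1}\mathcal{L}_w\bigr)$; the $\overline{k}$-descent part gives injectivity of the canonical map; and tensoring the first with $\overline{k}$ and using the second yields $\overline{\mathcal{L}}=\overline{k}\oplus\bigl(\bigoplus_{w\geq1}\overline{\mathcal{L}}_w\bigr)$. So both parts of the theorem reduce to this assertion.

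I would establish the assertion via the Anderson--Brownawell--Papanikolas linear independence criterion. For $\fs=(s_1,\dots,s_r)$ and $\bu=(u_1,\dots,u_r)\in(\overline{k}^\times)^r\cap\DDConv_{\fs,\infty}$, recall from \cite{Cha14} the block lower-triangular matrix
\[
    \Phi_{\fs,\bu}=\begin{pmatrix}
        (t-\theta)^{\wt(\fs)} & & & \\
        u_1^{(-1)}(t-\theta)^{\wt(\fs)} & (t-\theta)^{s_2+\cdots+s_r} & & \\
         & \ddots & \ddots & \\
         & & u_r^{(-1)}(t-\theta)^{s_r} & 1
    \end{pmatrix}\in\Mat_{r+1}(\overline{k}[t]),
\]
together with a rigid analytic trivialization $\Psi_{\fs,\bu}$ whose entries are entire functions --- after the normalization coming from the Anderson--Thakur analysis \cite{AT90} of such series --- and whose value at $t=\theta$ recovers $\Li_{\fs}(\bu)$ once one multiplies by the appropriate power of the Carlitz period $\tpi$. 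First I would splice together, from the finitely many pairs $(\fs_i,\bu_i)$ and the Carlitz motive, a single matrix $\Phi\in\Mat_\ell(\overline{k}[t])$ with $\det\Phi=c\,(t-\theta)^N$ for some $c\in\overline{k}^\times$, $N\geq0$, and a rigid analytic trivialization $\Psi$, so arranged that a distinguished row of $\Psi(\theta)$ has $1,\Li_{\fs_1}(\bu_1),\dots,\Li_{\fs_n}(\bu_n)$ among its entries.

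Next, given a relation $c_0+\sum_i c_i\Li_{\fs_i}(\bu_i)=0$ with $c_i\in\overline{k}$, I would rewrite it as a $\overline{k}$-linear relation among the entries of that row of $\Psi(\theta)$ and apply the ABP criterion to obtain a row vector $P(t)\in\Mat_{1\times\ell}(\overline{k}[t])$ with $P(\theta)$ equal to the coefficient vector of the relation and $P\Psi=0$ as functions. Substituting the Frobenius difference equation $\Psi^{(-1)}=\Phi\Psi$ shows that $P^{(-1)}\Phi-P$ also annihilates $\Psi$; exploiting the block lower-triangular shape of $\Phi$, whose diagonal is a list of scalars $(t-\theta)^{w}$ (the various partial weights) together with a final $1$, one deduces on the one hand that $P$ is weight homogeneous --- so the original relation splits into relations among values of a common weight --- and on the other hand, by tracking $P$ together with its Frobenius twists along $t=\theta,\theta^{q},\theta^{q^{2}},\dots$, that these relations descend to $k$. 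This proves the assertion, hence the theorem.

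The main obstacle is exactly this last step: extracting from $P\Psi=0$ and the precise shape of $\Phi$ both the weight homogeneity and the descent of the coefficients from $\overline{k}$ to $k$. Conceptually it amounts to the fact that the pre-$t$-motive generated by the CMPL data is an iterated extension of the trivial motive by Carlitz tensor powers $C^{\otimes w}$, so that all of its linear relations are governed by the unipotent radical of its $t$-motivic Galois group and by $\Ext$-groups already defined over $k$; concretely it can be carried out, following \cite[Sec.~3]{ABP04}, by a direct but delicate manipulation of the difference equation. The remaining inputs to the ABP criterion --- entireness of the normalized trivializing functions and the determinant condition $\det\Phi=c\,(t-\theta)^N$ --- are routine consequences of the explicit series defining $\Li_{\fs}$ and the estimates in \cite{AT90}.
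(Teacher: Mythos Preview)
Your overall strategy is correct and is essentially the one Chang uses in \cite{Cha14} (this paper does not reprove Theorem~\ref{Thm:Cha14_CMPLs}; it cites it and then runs the same machinery $v$-adically for Theorem~\ref{Thm:CMPLs}). The organizational difference is that Chang, and this paper in Definition~\ref{Def:MZ}, abstract the ABP data into an ``MZ/MPL property'': besides $\psi^{(-1)}=\Phi\psi$ and the shape of $\psi(\theta)$, one records that the last column of $\Phi$ is $(0,\dots,0,f(t))^\tr$ and, crucially, that $\psi(\theta^{q^N})=(0,\dots,0,(cZ\tpi^w)^{q^N})^\tr$ for every $N\geq1$. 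This last fact, which comes from the simple zeros of $\Omega$ at $\theta^{q^N}$, is precisely the mechanism your ``last step'' is missing.

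Concretely: for the descent to $k$ at a fixed weight (Lemma~\ref{Lem:PreGMZ1} here), after ABP produces $P$ with $P\psi=0$, one normalizes so one distinguished entry is $1$, forms $(\mathbf{P}-f(t)^{-1}\mathbf{P}^{(-1)}\Phi)\psi=0$, and specializes at $t=\theta^{q^N}$ for $N\gg0$; because only the last entry in each block of $\psi(\theta^{q^N})$ survives and the corresponding entries of the new row are $\mathbf{p}_j-\mathbf{p}_j^{(-1)}$, linear independence forces $\mathbf{p}_j\in\FF_q(t)$ and hence $\mathbf{p}_j(\theta)\in k$. For weight homogeneity across different weights (Lemma~\ref{Lem:PreGMZ2} here), one first multiplies the block for weight $w_i$ by $\Omega^{w_{\max}-w_i}$ so that all blocks acquire the common factor $\tpi^{w_{\max}}$ at $t=\theta$; then at $t=\theta^{q^N}$ the extra $\Omega$-powers annihilate every block except the one of maximal weight, and specializing $P\psi=0$ there isolates a nontrivial relation among the top-weight values alone. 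Your phrase ``tracking $P$ along $t=\theta,\theta^q,\dots$'' and the remark about iterated extensions by $C^{\otimes w}$ point in the right direction, but without the explicit vanishing pattern of $\psi(\theta^{q^N})$ the argument does not close; this is the genuine content you flagged as the obstacle.
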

    The $\infty$-adic multiple zeta values, abbreviated as MZVs, were defined by Thakur in \cite{Tha04}, generalizing the notion of the Carlitz zeta values in \cite{Car35}. More precisely, for any index $\fs=(s_1,\dots,s_r)\in\mathbb{Z}_{>0}^r$, the $\infty$-adic MZV at $\fs$ is defined by the series
    \begin{equation}\label{Eq:MZV}
        \zeta_A(\fs):=\sum\frac{1}{a_1^{s_1}\cdots a_r^{s_r}}\in k_\infty,
    \end{equation}
    where $(a_1,\dots,a_r)\in A^r$ with $a_i$ monic and $\deg_\theta a_i$ strictly decreasing. The weight and the depth of the presentation $\zeta_A(\fs)$ are defined by $\wt(\fs)$ and $\dep(\fs)$ respectively. It was shown by Thakur in \cite{Tha09} that $\zeta_A(\fs)$ is non-vanishing for any index $\fs=(s_1,\dots,s_r)\in\mathbb{Z}^r_{>0}$.
    Let $\mathcal{Z}$ be the $k$-vector space spanned by $1$ and all $\infty$-adic MZVs. For $w\geq 1$, let $\mathcal{Z}_w$ be the $k$-vector space spanned by MZVs of weight $w$. Thakur showed in \cite{Tha10} that $\mathcal{Z}$ forms a $k$-algebra by sum-shuffle relations. In particular, one has $\mathcal{Z}_{w_1}\mathcal{Z}_{w_2}\subset\mathcal{Z}_{w_1+w_2}$ for $w_1\geq 1$ and $w_2\geq 1$. As a consequence of Theorem.~\ref{Thm:Cha14_CMPLs}, Chang successfully established the following function field analogue of Goncharov's direct sum conjecture \cite{Gon97} for $\infty$-adic MZVs.
    \begin{theorem}[{\cite[Thm.~2.2.1]{Cha14}}]\label{Thm:Cha14}
        Let $\overline{\mathcal{Z}}$ be the $\overline{k}$-algebra generated by all $\infty$-adic MZVs and for $w\geq 1$ let $\overline{\mathcal{Z}}_w$ be the $\overline{k}$-vector space spanned by $\infty$-adic MZVs of weight $w$. Then we have
        \begin{enumerate}
            \item $\overline{\mathcal{Z}}=\overline{k}\oplus\left(\oplus_{w\geq 1}\overline{\mathcal{Z}}_w\right)$ forms a graded $\overline{k}$-algebra.
            \item The canonical map $\overline{k}\otimes_k\mathcal{Z}\to \overline{\mathcal{Z}}$ is a bijection.
        \end{enumerate}
    \end{theorem}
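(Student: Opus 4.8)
The plan is to deduce Theorem~\ref{Thm:Cha14} from Theorem~\ref{Thm:Cha14_CMPLs} by realizing $\infty$-adic MZVs as special values of CMPLs at algebraic points. The crucial input is the Anderson--Thakur formula together with its generalization to the multiple setting: for every index $\fs=(s_1,\dots,s_r)\in\mathbb{Z}_{>0}^r$ of weight $w=\wt(\fs)$, the value $\zeta_A(\fs)$ can be written as a $k$-linear combination of CMPL values $\Li_\fs(\bu)$ with $\bu\in(\overline{k}^\times)^r\cap\DDConv_{\fs,\infty}$, the points $\bu$ being manufactured out of the coefficients of the Anderson--Thakur interpolation polynomials (hence algebraic and lying in the prescribed convergence domain), and the scalars lying in $A$ after clearing a suitable product of Carlitz factorials. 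In particular every such CMPL value occurring has weight $w$. Granting this, $\mathcal{Z}_w\subseteq\mathcal{L}_w$ for all $w\geq 1$; since $1\in\mathcal{L}$ and both $\mathcal{Z}$ and $\mathcal{L}$ are $k$-subalgebras of $\mathbb{C}_\infty$, we obtain an inclusion of $k$-algebras $\mathcal{Z}\hookrightarrow\mathcal{L}$, which after base change to $\overline{k}$ respects weights in the sense that $\overline{\mathcal{Z}}_w\subseteq\overline{\mathcal{L}}_w$.

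For part~(1): by the sum-shuffle relations $\mathcal{Z}_{w_1}\mathcal{Z}_{w_2}\subseteq\mathcal{Z}_{w_1+w_2}$ (which persist after extending scalars to $\overline{k}$), the $\overline{k}$-algebra $\overline{\mathcal{Z}}$ generated by all $\infty$-adic MZVs equals $\overline{k}\cdot 1+\sum_{w\geq 1}\overline{\mathcal{Z}}_w$. Since $\overline{\mathcal{Z}}_w\subseteq\overline{\mathcal{L}}_w$ and, by Theorem~\ref{Thm:Cha14_CMPLs}(1), the sum $\overline{k}\oplus\bigoplus_{w\geq 1}\overline{\mathcal{L}}_w$ is direct, it follows that the sum $\overline{k}\cdot 1+\sum_{w\geq 1}\overline{\mathcal{Z}}_w$ is direct as well. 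Hence $\overline{\mathcal{Z}}=\overline{k}\oplus\bigoplus_{w\geq 1}\overline{\mathcal{Z}}_w$, and together with $\overline{\mathcal{Z}}_{w_1}\overline{\mathcal{Z}}_{w_2}\subseteq\overline{\mathcal{Z}}_{w_1+w_2}$ this exhibits $\overline{\mathcal{Z}}$ as a graded $\overline{k}$-algebra.

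For part~(2): the canonical map $\overline{k}\otimes_k\mathcal{Z}\to\overline{\mathcal{Z}}$ is surjective by construction, so it remains to prove injectivity. Since $\overline{k}$ is a field extension of $k$ it is flat, so tensoring the inclusion $\mathcal{Z}\hookrightarrow\mathcal{L}$ with $\overline{k}$ gives an injection $\overline{k}\otimes_k\mathcal{Z}\hookrightarrow\overline{k}\otimes_k\mathcal{L}$. Composing with the bijection $\overline{k}\otimes_k\mathcal{L}\iso\overline{\mathcal{L}}$ of Theorem~\ref{Thm:Cha14_CMPLs}(2) shows that $\overline{k}\otimes_k\mathcal{Z}\to\overline{\mathcal{L}}$ is injective; as this map factors through $\overline{\mathcal{Z}}\subseteq\overline{\mathcal{L}}$, the map $\overline{k}\otimes_k\mathcal{Z}\to\overline{\mathcal{Z}}$ is injective, hence bijective.

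The main obstacle is the first step, namely the reduction of multiple zeta values to Carlitz multiple polylogarithms at algebraic points of the same weight; this is where the Anderson--Thakur interpolation polynomials enter and where the genuine work lies (it is the positive-characteristic counterpart of expressing $\zeta(n)$ through classical polylogarithms). Once that identity is available, everything else is formal bookkeeping with the graded structure of $\overline{\mathcal{L}}$. I would also be careful to check that the algebraic points produced by the multiple Anderson--Thakur formula do lie in $\DDConv_{\fs,\infty}$, so that the relevant CMPL values are defined and genuinely belong to $\mathcal{L}_w$.
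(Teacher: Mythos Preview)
Your proposal is correct and matches the approach indicated in the paper. The paper does not reprove this cited result from \cite{Cha14}, but it explicitly states that Theorem~\ref{Thm:Cha14} is obtained as a consequence of Theorem~\ref{Thm:Cha14_CMPLs} via Chang's formula \cite[Thm.~5.5.2]{Cha14} expressing each $\zeta_A(\fs)$ as a $k$-linear combination of CMPL values of the same weight at algebraic points in $\DDConv_{\fs,\infty}$ (the Anderson--Thakur interpolation in the multiple setting); the paper carries out the identical deduction in the $v$-adic case (proof of Theorem~\ref{Main_Thm}), showing $\mathcal{Z}_{v,w}\subset\mathcal{L}_{v,w}$ and then invoking Theorem~\ref{Thm:CMPLs}. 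One small imprecision: in the formula the indices $\fs_\ell$ vary (only the weight is preserved), so you should write $\Li_{\fs_\ell}(\bu_\ell)$ rather than $\Li_\fs(\bu)$, but you clearly have the right picture.
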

    The primary goal in this article is to establish the $v$-adic analogue of Theorem~\ref{Thm:Cha14_CMPLs} and Theorem~\ref{Thm:Cha14} for any finite place $v$ of degree one.

\subsection{The main result}    
    
    
    
    
    Given a monic irreducible polynomial $\varpi_v\in A$ which corresponds to a finite place $v$ of $k$. Let $|\cdot|_v$ be the associated norm to the finite place $v$ such that $|\varpi_v|_v=q_v^{-1}$ where $q_v:=|A/vA|$ is the cardinality of the residue field. We set $k_v$ to be the completion of $k$ with respect to $|\cdot|_v$ and define $\mathbb{C}_v$ to be the completion of a fixed algebraic closure of $k_v$. Throughout this article, we fix an embedding from $\overline{k}$ into $\mathbb{C}_v$ once a finite place $v$ is given.
    
    We denote by $\Li_\fs(z_1,\cdots,z_r)_v$ when we regard this power series as a $v$-adic function. Note that $\Li_\fs$ converges on the set (see \cite[Sec.~2.2]{CM19})
    \begin{equation}\label{Def:Conv._Domain_v}
        \DDConv_{\fs,v}:=\{(z_1,\dots,z_r)\in\mathbb{C}_v^r\mid |z_1|_v<1,|z_\ell|_v\leq 1\mbox{ for }2\leq\ell\leq r\}.
    \end{equation}
    Let $\mathcal{L}_v$ be the $k$-vector space spanned by $1$ and $\Li_\fs(\bu)_v$ with $\bu\in(\overline{k}^\times)^r\cap\DDConv_{\fs,v}$. For $w\geq 1$, let $\mathcal{L}_{v,w}$ be the $k$-vector space spanned by $\Li_\fs(\bu)_v$ with $\bu\in(\overline{k}^\times)^r\cap\DDConv_{\fs,v}$ and $\wt(\fs)=w$. It is known that $\mathcal{L}_v$ forms a $k$-algebra and $\mathcal{L}_{v,w_1}\mathcal{L}_{v,w_1}\subset\mathcal{L}_{v,w_1+w_2}$ by the stuffle relations (c.f. \cite[Section~5.2]{Cha14}, \cite[Prop.~2.2.3]{CCM22}). 
        
    The main result of the present paper can be stated as follows.
    \begin{theorem}\label{Thm:CMPLs}
        Let $\overline{\mathcal{L}}_v$ be the $\overline{k}$-algebra generated by $\Li_\fs(\bu)_v$ with $\bu\in(\overline{k}^\times)^r\cap\DDConv_{\fs,v}$ and for $w\geq 1$ let $\overline{\mathcal{L}}_{v,w}$ be the $\overline{k}$-vector space spanned by $\Li_\fs(\bu)_v$ of weight $w$ with $\bu\in(\overline{k}^\times)^r\cap\DDConv_{\fs,v}$. If the finite place $v$ is of degree one, then the following assertions hold.
        \begin{enumerate}
            \item $\overline{\mathcal{L}}_v=\overline{k}\oplus\left(\oplus_{w\geq 1}\overline{\mathcal{L}}_{v,w}\right)$ forms a graded $\overline{k}$-algebra.
            \item The canonical map $\overline{k}\otimes_k\mathcal{L}_v\to \overline{\mathcal{L}}_v$ is a bijection.
        \end{enumerate}
    \end{theorem}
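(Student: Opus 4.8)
The plan is to follow the architecture of Chang's proof of Theorem~\ref{Thm:Cha14_CMPLs} step by step, substituting for the $\infty$-adic transcendence input of Anderson--Brownawell--Papanikolas its $v$-adic analogue. There are two ingredients. The first is a $v$-adic linear independence criterion of ABP type: given $\Phi\in\Mat_\ell(\overline{k}[t])$ with $\det\Phi=c\,(t-\theta)^{s}$ for some $c\in\overline{k}^{\times}$ and $s\geq 0$, and a column vector $\psi$ whose entries are $v$-adic analytic functions lying in the $v$-adic counterpart of the ABP ring $\mathbb{E}$ and satisfying $\psi^{(-1)}=\Phi\psi$, every $\overline{k}$-linear relation $\mathbf{P}\psi(\theta)=0$ with $\mathbf{P}\in\Mat_{1\times\ell}(\overline{k})$ lifts to a relation defined over $\overline{k}[t]$, i.e.\ there is $\mathbf{Q}\in\Mat_{1\times\ell}(\overline{k}[t])$ with $\mathbf{Q}\psi=0$ and $\mathbf{Q}(\theta)=\mathbf{P}$. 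The second ingredient is the algebraic ``motivic'' package attached to CMPLs: the Frobenius difference equations $\Psi^{(-1)}=\Phi\Psi$ with $\Phi$ over $\overline{k}[t]$ are exactly the ones built by Chang, since the $\Phi$-matrices are purely algebraic; only the rigid analytic trivializations $\Psi$ have to be reconstructed $v$-adically.

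\textbf{Step 1: the $v$-adic ABP criterion.} I would develop the $v$-adic function theory in parallel with \cite[Sec.~3]{ABP04}: fix the embedding $\overline{k}\hookrightarrow\mathbb{C}_v$, introduce the $v$-adic Tate algebra together with the $v$-adic avatar of $\mathbb{E}$ on which the twisting $f\mapsto f^{(-1)}$ operates, and prove the $v$-adic versions of the ABP propagation-of-vanishing estimates, which control how a $\overline{k}[t]$-linear combination of the entries of $\psi$ that vanishes to large order at $t=\theta$ must vanish identically. The degree-one hypothesis on $v$ is used precisely here: when $\deg v=1$ one has $\theta\equiv\zeta\pmod v$ with $\zeta\in\mathbb{F}_q$, so the entire Frobenius orbit $\{\theta^{q^{j}}\}_{j\geq 0}$ collapses to a single residue; this is what keeps the relevant $v$-adic size estimates, and the placement of $t=\theta$ relative to the discs of convergence of the deformation series, in line with the $\infty$-adic situation, whereas for $\deg v>1$ the orbit is $d$-periodic modulo $v$ and the residue field $\mathbb{F}_{q^{d}}$ carries extra constants. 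I expect this step to be the main obstacle.

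\textbf{Step 2: $v$-adic trivializations for CMPLs.} For an index $\fs$ and $\bu\in(\overline{k}^{\times})^{r}\cap\DDConv_{\fs,v}$ I would take the same $\Phi_{\fs,\bu}\in\Mat_\ell(\overline{k}[t])$ as in \cite{Cha14}, but solve $\Psi^{(-1)}=\Phi_{\fs,\bu}\Psi$ using the $v$-adically convergent deformation series $\mathfrak{L}_{\fs}(\bu)(t)$ of the CMPL; one checks that these series converge on a $v$-adic disc around $t=\theta$ (again invoking $\deg v=1$), lie in the $v$-adic ABP ring, and satisfy $\mathfrak{L}_{\fs}(\bu)(\theta)=\Li_{\fs}(\bu)_v$ up to an explicit factor in $\overline{k}^{\times}$. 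Note that since the Anderson--Thakur function $\Omega$ does not converge at $t=\theta$ over $\mathbb{C}_v$, one cannot reuse Chang's trivialization verbatim and must realize the entries of $\psi$ as such deformation series directly. Assembling these over a finite set of indices and points, and exploiting the stuffle structure of $\mathcal{L}_v$, produces a single pair $(\Phi,\psi)$ to which Step~1 applies.

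\textbf{Step 3: descent and conclusion.} Applying the $v$-adic ABP criterion, any $\overline{k}$-linear relation among $\Li_{\fs}(\bu)_v$'s of a fixed weight lifts to a $\overline{k}[t]$-linear relation among the corresponding $\mathfrak{L}_{\fs}(\bu)(t)$. A Galois descent argument identical to Chang's -- the space of such relations is stable under $\Gal(\overline{k}/k)$, hence defined over $k$, and specializes at $t=\theta$ -- shows the relation comes from a $k$-linear relation among the $\Li_{\fs}(\bu)_v$. This yields injectivity of $\overline{k}\otimes_k\mathcal{L}_v\to\overline{\mathcal{L}}_v$, surjectivity being formal, which is part~(2). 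For the grading in part~(1) I would rerun the argument while keeping track of the weight: in the $\infty$-adic theory the weight $w$ is recorded by the factor $\widetilde{\pi}^{\,w}$ in front of the period entries and is separated by transcendence of $\widetilde{\pi}$, whereas here it is recorded by the $(t-\theta)^{w}$-block in the Frobenius equation, and a mixed-weight $\overline{k}$-relation, once lifted to the deformation series, is forced to split weight-by-weight by the graded structure of the $\Phi$-matrices. The genuinely new point relative to \cite{Cha14} is the absence of a $v$-adic Carlitz period, so this weight-separation has to be read off from the difference equations themselves rather than from transcendence of a period; together with Step~1 I expect it to require the most care.
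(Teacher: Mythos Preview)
Your Step~1 is essentially what the paper does: a $v$-adic ABP criterion (its Theorem~\ref{Thm:vABP}) is proved by transporting the estimates of \cite[Sec.~3]{ABP04} to $\mathcal{E}_v$. But Steps~2 and~3 contain a real gap.

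You propose to keep Chang's matrices $\Phi_{\fs,\bu}$ with $\det\Phi=c(t-\theta)^{s}$ and only replace the trivialization $\psi$. This cannot work: the first entry of any $\psi$ solving $\psi^{(-1)}=\Phi\psi$ must satisfy $\psi_1^{(-1)}=(t-\theta)^{w}\psi_1$, whose nonzero solutions are scalar multiples of $\Omega^{w}$, and you correctly note that $\Omega$ is not in $\mathcal{E}_v$. So with Chang's $\Phi$ there is no admissible $\psi$ at all, and in particular no way to ``realize the entries of $\psi$ as deformation series directly''. The paper's key idea, which your proposal misses, is to change the $\Phi$-matrix as well: one replaces $(t-\theta)$ by $(1-\varpi_v t)$ throughout and evaluates at $t=\varpi_v^{-1}$ rather than $t=\theta$. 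Since $|\varpi_v|_v<1$, the modified Anderson--Thakur series $\Omega_{\varpi_v}(t)=\prod_{i\geq1}(1-\varpi_v^{q^{i}}t)$ \emph{does} lie in $\mathcal{E}_v$, and its value $\tilde{\pi}_{\varpi_v}:=\Omega_{\varpi_v}(\varpi_v^{-1})$ is shown (via the $v$-adic ABP criterion itself) to be transcendental. The deformation series $\mathfrak{L}_{\fs,\bu}(t)$ are then built from $\Omega_{\varpi_v}$ and shown to be $v$-adically entire with $\mathfrak{L}_{\fs,\bu}(\varpi_v^{-q^{N}})=(\tilde{\pi}_{\varpi_v}^{\,w}\Li_{\fs}(\bu)_v)^{q^{N}}$ for all $N\geq0$.

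Consequently your Step~3 is also off: there \emph{is} a $v$-adic Carlitz-type period, and the weight separation is carried out exactly as in \cite{Cha14}, via transcendence of $\tilde{\pi}_{\varpi_v}$ together with the vanishing of $\Omega_{\varpi_v}$ at $t=\varpi_v^{-q^{N}}$ (this kills the lower-weight blocks upon specialization). No new ``reading off weights from the difference equations'' is needed. Finally, your diagnosis of where the degree-one hypothesis enters is not quite right: the $v$-adic ABP criterion holds for arbitrary finite $v$. The restriction $\deg v=1$ is used only at the specialization step, where one needs $\varpi_v=\theta+\lambda_v$ with $\lambda_v\in\mathbb{F}_q$ so that $\varpi_v-\varpi_v^{q^{j}}=\theta-\theta^{q^{j}}$, which is what makes $\mathfrak{L}_{\fs,\bu}(\varpi_v^{-1})$ recover $\tilde{\pi}_{\varpi_v}^{\,w}\Li_{\fs}(\bu)_v$ on the nose.
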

    
    An immediate consequence of Theorem \ref{Thm:CMPLs} is the following.
    
    \begin{corollary}
        Let $\fs=(s_1,\dots,s_r)\in\mathbb{Z}_{>0}^{r}$ be an index and $\bu=(u_1,\dots,u_r)\in (\overline{k}^\times)^r\cap\DDConv_{\fs,v}$. If the finite place $v$ is of degree one, then $\Li_\fs(\bu)_v$ is either zero or transcendental over $k$.
    \end{corollary}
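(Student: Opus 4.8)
The plan is to read the corollary straight off Theorem~\ref{Thm:CMPLs}; no new ingredient is needed, so the argument will be short. First I would set $w:=\wt(\fs)=s_1+\cdots+s_r$ and observe that $w\geq 1$ since $\fs\in\mathbb{Z}_{>0}^r$. By the very definition of $\overline{\mathcal{L}}_{v,w}$ as the $\overline{k}$-span of the weight-$w$ values $\Li_{\fs'}(\bu')_v$, the value $\Li_\fs(\bu)_v$ is itself one of these spanning vectors, so $\Li_\fs(\bu)_v\in\overline{\mathcal{L}}_{v,w}\subset\overline{\mathcal{L}}_v$.

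Next I would suppose that $\Li_\fs(\bu)_v$ is algebraic over $k$. Under the fixed embedding $\overline{k}\hookrightarrow\mathbb{C}_v$ this means $\Li_\fs(\bu)_v\in\overline{k}$, i.e.\ it lies in the degree-zero component of the graded $\overline{k}$-algebra $\overline{\mathcal{L}}_v$. By Theorem~\ref{Thm:CMPLs}(1) the decomposition $\overline{\mathcal{L}}_v=\overline{k}\oplus\big(\oplus_{w'\geq 1}\overline{\mathcal{L}}_{v,w'}\big)$ is a direct sum inside $\mathbb{C}_v$, so in particular $\overline{k}\cap\overline{\mathcal{L}}_{v,w}=\{0\}$ because $w\geq 1$. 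Combining this with the previous paragraph forces $\Li_\fs(\bu)_v=0$. Hence $\Li_\fs(\bu)_v$ is either $0$ or transcendental over $k$, which is exactly the assertion.

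Alternatively, one could invoke part (2) instead of part (1): if $\Li_\fs(\bu)_v=\alpha$ with $\alpha\in\overline{k}^\times$, then $1\otimes\Li_\fs(\bu)_v-\alpha\otimes 1\in\overline{k}\otimes_k\mathcal{L}_v$ maps to $0$ under the canonical map, so by its injectivity $\Li_\fs(\bu)_v$ and $1$ must already be $k$-linearly dependent in $\mathcal{L}_v$, whence $\Li_\fs(\bu)_v\in k$, and one finishes with the grading as before. I do not expect a genuine obstacle here: all of the depth is concentrated in Theorem~\ref{Thm:CMPLs}, and the only point that needs a moment's care is checking that $\Li_\fs(\bu)_v$ really lands in the weight-$w$ graded piece with $w\geq 1$, which is immediate from the definitions.
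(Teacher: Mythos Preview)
Your argument is correct and is exactly the ``immediate consequence'' the paper has in mind: the value lies in $\overline{\mathcal{L}}_{v,w}$ with $w\geq 1$, so by the direct-sum decomposition of Theorem~\ref{Thm:CMPLs}(1) it cannot lie in $\overline{k}$ unless it is zero. The paper gives no further details beyond calling it immediate (and the analogous corollary to Theorem~\ref{Thm:GMZ} is phrased the same way), so there is nothing to compare.
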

    
    To describe an application of Theorem~\ref{Thm:CMPLs}, we briefly review the definition of $v$-adic MZVs introduced by Chang and Mishiba in \cite{CM21}. 
    Based on the formula of Chang \cite{Cha14} expressing $\infty$-adic MZVs in terms of CMPLs at integral points, Chang and Mishiba introduced the Carlitz multiple  star polylogarithms, abbreviated as CMSPLs, given in (\ref{Def:CMSPL}) and derived the formula of $\infty$-adic MZVs in terms of CMSPLs at integral points.
    This formula generalized the result of Anderson and Thakur in the depth one case \cite{AT90}.
    Let $\mathfrak{p}\in\mathbb{Z}$ be a prime number. Inspired by the definition of Furusho's $\mathfrak{p}$-adic MZVs \cite{Fur04} using Coleman's integration theory \cite{Col82} to analytically continue the classical polylogarithms $\mathfrak{p}$-adically, Chang and Mishiba regarded CMSPLs as a $v$-adic function defined by the same series and then used the logarithmic interpretation of CMSPLs to extend the defining domain so that all integral points can be included. 
    Then they defined the $v$-adic MZVs $\zeta_A(\fs)_v$ by using the same formula of $\infty$-adic MZVs for any index $\fs\in\mathbb{Z}_{>0}^r$ (see Theorem~\ref{Thm:MZVs_to_CMSPLs} and Definition~\ref{Def:vMZV}).
    
    Let $\mathcal{Z}_v$ be the $k$-vector space spanned by $1$ and all $v$-adic MZVs. For $w\geq 1$ let $\mathcal{Z}_{v,w}$ be the $k$-vector space spanned by $v$-adic MZVs of weight $w$. It is shown in \cite{CCM22} that $v$-adic MZVs forms a $k$-algebra with multiplication law given by sum-shuffle relations as the case of $\infty$-adic MZVs. Moreover, we have $\mathcal{Z}_{v,w_1}\mathcal{Z}_{v,w_2}\subset \mathcal{Z}_{v,w_1+w_2}$ for $w_1\geq 1$ and $w_2\geq 1$. As an application of Theorem~\ref{Thm:CMPLs}, we formulate and prove a function field analogue of Furusho-Yamashita's conjecture \cite[Conj.~5]{Yam10} when $v$ is of degree one.
    \begin{theorem}\label{Main_Thm}
        Let $\overline{\mathcal{Z}}_v$ be the $\overline{k}$-algebra generated by all $v$-adic MZVs and for $w\geq 1$ let $\overline{\mathcal{Z}}_{v,w}$ be the $\overline{k}$-vector space spanned by $v$-adic MZVs of weight $w$. If $v$ is of degree one, then the following assertions hold.
        \begin{enumerate}
            \item $\overline{\mathcal{Z}}_v=\overline{k}\oplus\left(\oplus_{w\geq 1}\overline{\mathcal{Z}}_{v,w}\right)$ forms a graded $\overline{k}$-algebra.
            \item The canonical map $\overline{k}\otimes_k\mathcal{Z}_v\to \overline{\mathcal{Z}}_v$ is a bijection.
        \end{enumerate}
    \end{theorem}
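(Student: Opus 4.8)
The plan is to follow the deduction of Theorem~\ref{Thm:Cha14} from Theorem~\ref{Thm:Cha14_CMPLs} in the $\infty$-adic case, with Theorem~\ref{Thm:CMPLs} now serving as the linear independence engine. The only genuinely new input required is a $v$-adic incarnation of Chang's formula, namely the inclusion $\mathcal{Z}_{v,w}\subseteq\mathcal{L}_{v,w}$ for every $w\geq 1$; once this is in place, assertions (1) and (2) follow by a purely formal argument combining the grading and flatness of $\overline{k}$ over $k$.

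First I would establish $\mathcal{Z}_{v,w}\subseteq\mathcal{L}_{v,w}$. By Theorem~\ref{Thm:MZVs_to_CMSPLs} and Definition~\ref{Def:vMZV}, any $v$-adic MZV $\zeta_A(\fs)_v$ with $\wt(\fs)=w$ is a $k$-linear combination of $v$-adic CMSPL values at integral points, all of weight $w$, and the star-to-non-star identity rewrites each such CMSPL as a finite sum of ordinary CMPLs obtained by merging consecutive arguments (multiplying the corresponding coordinates and summing the corresponding exponents), an operation that preserves the weight. The only delicate point is that the integral points occurring here need not lie in the convergence polydisc $\DDConv_{\fs,v}$ — which is precisely why the logarithmic interpretation is invoked in Definition~\ref{Def:vMZV}. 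Using the $t$-module description behind that interpretation together with the functional equation $\log_G\circ\phi_a=\partial\phi_a\circ\log_G$, one multiplies the coordinates of an integral point by a suitable power of $\varpi_v$ to move it into the $v$-adic region of convergence while keeping it algebraic and the weight unchanged (cf.\ \cite{CM21,CCM22}); the resulting identity expresses $\zeta_A(\fs)_v$ as a $k$-linear combination of convergent values $\Li_{\fs'}(\bu)_v$ with $\bu\in(\overline{k}^\times)^r\cap\DDConv_{\fs',v}$ and $\wt(\fs')=w$. Hence $\zeta_A(\fs)_v\in\mathcal{L}_{v,w}$, so $\mathcal{Z}_{v,w}\subseteq\mathcal{L}_{v,w}$, and extending scalars gives $\overline{\mathcal{Z}}_{v,w}\subseteq\overline{\mathcal{L}}_{v,w}$.

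Granting this, the formal deduction runs as follows. Since $\mathcal{Z}_{v,w_1}\mathcal{Z}_{v,w_2}\subseteq\mathcal{Z}_{v,w_1+w_2}$ by the sum-shuffle relations, one has $\overline{\mathcal{Z}}_v=\overline{k}+\sum_{w\geq 1}\overline{\mathcal{Z}}_{v,w}$ with the product respecting the weight filtration. By the previous step the subspaces $\overline{k}$ and $\overline{\mathcal{Z}}_{v,w}$ ($w\geq 1$) lie inside the corresponding summands of the \emph{direct} sum $\overline{\mathcal{L}}_v=\overline{k}\oplus(\oplus_{w\geq 1}\overline{\mathcal{L}}_{v,w})$ of Theorem~\ref{Thm:CMPLs}(1), so the sum $\overline{k}+\sum_{w\geq 1}\overline{\mathcal{Z}}_{v,w}$ is direct; this is assertion (1), and restricting to $k$-coefficients also yields $\mathcal{Z}_v=k\oplus(\oplus_{w\geq 1}\mathcal{Z}_{v,w})$. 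For (2), the canonical map $\overline{k}\otimes_k\mathcal{Z}_v\to\overline{\mathcal{Z}}_v$ is surjective because $\mathcal{Z}_v$ is a $k$-algebra with $\overline{\mathcal{Z}}_v=\overline{k}\cdot\mathcal{Z}_v$, and by the gradings it suffices to prove injectivity in each weight $w$. There, $\mathcal{Z}_{v,w}\hookrightarrow\mathcal{L}_{v,w}$ together with freeness of $\overline{k}$ over $k$ gives an injection $\overline{k}\otimes_k\mathcal{Z}_{v,w}\hookrightarrow\overline{k}\otimes_k\mathcal{L}_{v,w}$, and composing with the weight-$w$ component of the bijection $\overline{k}\otimes_k\mathcal{L}_v\xrightarrow{\sim}\overline{\mathcal{L}}_v$ of Theorem~\ref{Thm:CMPLs}(2) produces an injection $\overline{k}\otimes_k\mathcal{Z}_{v,w}\hookrightarrow\overline{\mathcal{L}}_{v,w}$ with image exactly $\overline{\mathcal{Z}}_{v,w}$; hence $\overline{k}\otimes_k\mathcal{Z}_{v,w}\to\overline{\mathcal{Z}}_{v,w}$ is bijective, and summing over $w$ proves (2).

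The formal deduction is routine once Theorem~\ref{Thm:CMPLs} is available; the substantive point is the first step, namely controlling the $v$-adic analytic continuation underlying Definition~\ref{Def:vMZV} so that it stays inside $\mathcal{L}_{v,w}$ — that is, verifying that the $t$-module functional equations used to push the integral points into the convergence region land one at \emph{algebraic} points of the \emph{same} weight with coefficients still in $k$. This requires care with the $v$-adic radii of convergence of the relevant logarithms, and it is also the place where the degree-one hypothesis on $v$ ultimately enters, through Theorem~\ref{Thm:CMPLs}.
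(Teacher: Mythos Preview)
Your proposal is correct and follows essentially the same route as the paper: establish the inclusion $\mathcal{Z}_{v,w}\subseteq\mathcal{L}_{v,w}$ and then invoke Theorem~\ref{Thm:CMPLs}. The only difference is one of packaging: the paper does not carry out the convergence-region argument you sketch but instead cites \cite[Cor.~3.2.11]{Che23} (see also \cite[Rem.~5.2.1]{CCM22}) for the inclusion $\zeta_A(\fs)_v\in\mathcal{L}_{v,w}^\star$ and then appeals to Proposition~\ref{Prop:CMSPLs=CMPLs} ($\mathcal{L}_{v,w}^\star=\mathcal{L}_{v,w}$), after which it declares the result immediate from Theorem~\ref{Thm:CMPLs} without writing out the graded/flatness deduction you supply.
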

    
    For an positive integer $s$, we say that $s$ is $q$-even if $s$ is divisible by $q-1$ and otherwise it is called $q$-odd. In the case of $r=1$, Goss showed in \cite{Gos79} that $\zeta_A(s)_v=0$ for each $q$-even $s\in\mathbb{Z}_{>0}$, and Yu proved in \cite{Yu91} that $\zeta_A(s)_v$ is transcendental over $k$ for each $q$-odd $s\in\mathbb{Z}_{>0}$.
    An immediate consequence of Theorem \ref{Main_Thm} is the higher depth generalization of the result of Goss and Yu when $v$ is of degree one.
    
    \begin{corollary}
        Let $\fs=(s_1,\dots,s_r)\in\mathbb{Z}_{>0}^{r}$ be an index. If the finite place $v$ is of degree one, then $\zeta_A(\fs)_v$ is either zero or transcendental over $k$.
    \end{corollary}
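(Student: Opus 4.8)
The plan is to mimic, $v$-adically, the way Chang deduces Theorem~\ref{Thm:Cha14} from Theorem~\ref{Thm:Cha14_CMPLs}, now using Theorem~\ref{Thm:CMPLs} in place of Theorem~\ref{Thm:Cha14_CMPLs}. The one substantive input beyond Theorem~\ref{Thm:CMPLs} is the weight-preserving containment $\mathcal{Z}_{v,w}\subseteq\mathcal{L}_{v,w}$ for every $w\geq 1$ (equivalently $\overline{\mathcal{Z}}_{v,w}\subseteq\overline{\mathcal{L}}_{v,w}$); once this is available, Theorem~\ref{Main_Thm} follows formally, and the corollary is then immediate, because a nonzero $v$-adic MZV $\zeta_A(\fs)_v$ lies in $\overline{\mathcal{Z}}_{v,\wt(\fs)}$ with $\wt(\fs)\geq 1$, which meets $\overline{k}$ trivially by the grading asserted in part~(1). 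The hypothesis $\deg v=1$ is used nowhere except inside Theorem~\ref{Thm:CMPLs}.

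To establish the containment, I would start from the $v$-adic form of Chang's formula (Theorem~\ref{Thm:MZVs_to_CMSPLs} together with Definition~\ref{Def:vMZV}), which presents $\zeta_A(\fs)_v$ as a $k$-linear combination of $v$-adic CMSPLs at integral points, all of weight $\wt(\fs)$, the coefficients being built from Carlitz factorials and hence lying in $k$; then apply the elementary inclusion--exclusion identity rewriting a CMSPL as a $\{\pm1\}$-coefficient combination of CMPLs of the same weight. This reduces the claim to showing that each of the $v$-adic CMPLs that occurs---evaluated at an integral point, hence generically outside $\DDConv_{\fs,v}$---lies in $\mathcal{L}_{v,w}$. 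Here one uses the logarithmic interpretation underlying the very definition of these $v$-adic values: $\Li_\fs(\cdot)_v$ at an integral point is a designated coordinate of a $v$-adic logarithm on an associated $t$-module built from Carlitz tensor powers, and unwinding this, together with the functional equations of that logarithm, rewrites it as a $k$-linear combination of $v$-adic CMPLs of the same weight at algebraic points lying in the convergence domain. I expect this step to be the main obstacle: the labor is in the careful bookkeeping that the rewriting preserves weight, keeps the coefficients in $k$, and is compatible with the fixed embedding $\overline{k}\hookrightarrow\mathbb{C}_v$, most of it being the $v$-adic transcription of the Chang--Mishiba constructions recalled in the excerpt.

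Granting the containment, part~(1) is formal: by Theorem~\ref{Thm:CMPLs}(1) the sum $\overline{k}\oplus\bigl(\oplus_{w\geq 1}\overline{\mathcal{L}}_{v,w}\bigr)$ is direct, so $\overline{\mathcal{Z}}_{v,w}\subseteq\overline{\mathcal{L}}_{v,w}$ forces $\overline{k}+\sum_{w\geq 1}\overline{\mathcal{Z}}_{v,w}$ to be direct too; since $1$ together with all $v$-adic MZVs span $\mathcal{Z}_v$ and the sum-shuffle relations give $\mathcal{Z}_{v,w_1}\mathcal{Z}_{v,w_2}\subseteq\mathcal{Z}_{v,w_1+w_2}$, we obtain $\overline{\mathcal{Z}}_v=\overline{k}\cdot\mathcal{Z}_v=\overline{k}\oplus\bigl(\oplus_{w\geq 1}\overline{\mathcal{Z}}_{v,w}\bigr)$, and extending scalars in the sum-shuffle relations makes this a graded $\overline{k}$-algebra.

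For part~(2), surjectivity of $\overline{k}\otimes_k\mathcal{Z}_v\to\overline{\mathcal{Z}}_v$ is clear. For injectivity, factor this map as
\[
\overline{k}\otimes_k\mathcal{Z}_v\longrightarrow\overline{k}\otimes_k\mathcal{L}_v\longrightarrow\overline{\mathcal{L}}_v,
\]
where the first arrow is induced by the inclusion $\mathcal{Z}_v\subseteq\mathcal{L}_v$ (a consequence of the containment above) and is injective because $\overline{k}$ is flat over $k$, while the second is injective by Theorem~\ref{Thm:CMPLs}(2). Hence the composite, and therefore $\overline{k}\otimes_k\mathcal{Z}_v\to\overline{\mathcal{Z}}_v$, is injective, completing the proof of Theorem~\ref{Main_Thm}; the corollary then follows as indicated in the first paragraph.
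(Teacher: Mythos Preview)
Your proposal is correct and follows the same route as the paper: deduce the corollary from Theorem~\ref{Main_Thm}, and deduce Theorem~\ref{Main_Thm} from Theorem~\ref{Thm:CMPLs} via the weight-preserving containment $\mathcal{Z}_{v,w}\subseteq\mathcal{L}_{v,w}$. The only difference is packaging: where you sketch the reduction of $\zeta_A(\fs)_v$ to $v$-adic CMPLs at points of $\DDConv_{\fs,v}$ via the logarithmic interpretation and functional equations, the paper simply cites \cite[Cor.~3.2.11]{Che23} (cf.\ \cite[Rem.~5.2.1]{CCM22}) for $\zeta_A(\fs)_v\in\mathcal{L}_{v,w}^\star$ and then invokes Proposition~\ref{Prop:CMSPLs=CMPLs}.
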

    
    \subsection{Strategy and Organization}
    Let $\mathfrak{p}\in\mathbb{Z}$ be a prime number. In the $\mathfrak{p}$-adic transcendence theory in characteristic zero or $v$-adic transcendence theory over function fields in positive characteristic, one might work with functions which are defined only on a small region, even though their complex or $\infty$-adic counterpart are entire. 
    This problem also occurs in our study of Theorem \ref{Thm:CMPLs}. For instance, the function used in the proof of \cite{Cha14} is defined by a power series which is $\infty$-adically entire with algebraic coefficients (see \cite[Lem.5.3.1]{Cha14} for more details). 
    But when we regard it as a $v$-adic function defined by the same series, it is not $v$-adically entire anymore. This problem is the main difficulty to establish Theorem~\ref{Thm:CMPLs}.
    
    The strategy and organization are given as follows. In Section 2, we prove a linear independence criterion which can be regarded as $v$-adic analogue of \cite[Thm.~3.1.1]{ABP04}. As most arguments of the proof are parallel to the $\infty$-adic version, all the details are given in the appendix. In Section $3$, we generalize the notion of MZ (multizeta) property which was studied in \cite{Cha14} and we develop the essential tools for our later use. Section $4$ is occupied by the most technical part. We construct a \emph{$v$-adic entire} function so that the desired value appears in its specialization at a certain algebraic point. Since our construction heavily relies on the specific form of the corresponding irreducible polynomial $\varpi_v$, the restriction on the degree of $v$ is necessary for our argument. In Section $4$, we give the detailed proof of Theorem~\ref{Thm:CMPLs} and Theorem~\ref{Main_Thm}.

    
    
    \section*{Acknowledgement}
    The content of the present article is based on author's Ph.D. dissertation, and the author is grateful to his advisor C.-Y. Chang for fruitful discussions. The author also appreciates O. Gezmis, Y. Mishiba, and C. Namoijam for valuable comments and suggestions. Parts of this work were done when the author visited Osaka University in the summer of 2019 and Texas A\&M University from August 2021-May 2022. The former one was partially supported by the College of Science at National Tsing Hua University and the later one was funded by the Ministry of Science and Technology. The author thanks S. Yasuda, Y. Sugiyama, and M. Papanikolas for their hospitality.

\section{Preliminaries}
\subsection{Notation and the setup}
    \begin{itemize}
		\setlength{\leftskip}{0.8cm}
        \setlength{\baselineskip}{18pt}
		\item[$\mathbb{F}_q$] :=  a fixed finite field with $q$ elements, for $q$ a power of a prime number $p$.
		\item[$\infty$] :=  the point at infinity of the projective line $\mathbb{P}^1_{/\mathbb{F}_q}$.
		\item[$A$] :=  $\mathbb{F}_q[\theta]$, the ring of rational functions of $\mathbb{P}^1_{/\mathbb{F}_q}$ regular away from $\infty$.
		\item[$k$] :=  $\mathbb{F}_q(\theta)$, the function field of $\mathbb{P}^1_{/\mathbb{F}_q}$.
		\item[$|\cdot|_\infty$] := the nomalized absolute value on $k$ associated to $\infty$ so that $|\theta|_\infty=q$.
		\item[$k_\infty$] :=  the completion of $k$ at the place $\infty$.
		\item[$\mathbb{C}_\infty$] :=  the $\infty$-adic completion of a fixed algebraic closure of $k_\infty$.
		\item[$\overline{k}$] := a fixed algebraic closure of $k$ inside $\mathbb{C}_\infty$.
		\item[$\mathcal{E}$] := $\{f=\sum_{i=0}^\infty a_it^i\in\overline{k}\llbracket t\rrbracket\mid \lim_{i\to\infty}\sqrt[i]{|a_i|_\infty}=0,~[k_\infty(f_0,f_1,\dots):k_\infty]<\infty\}$.
	\end{itemize}
 
    Throughout this article, we denote by $v$ a fixed finite place of $k$ and $\varpi_v\in A$ the associated monic irreducible polynomial with degree $\epsilon_v$. Let $k_v$ be the completion of $k$ at the place $v$ and $\mathbb{C}_v$ be the completion of a fixed algebraic closure $\overline{k_v}$. We denote by $|\cdot|_v$ the normalized $v$-adic norm so that $|\varpi_v|_v=q_v^{-1}$, where $q_v:=q^{\epsilon_v}$. We fix an embedding $\iota_v:\overline{k}\hookrightarrow\mathbb{C}_v$, and we regard $\overline{k}$ as a subfield of $\mathbb{C}_v$ via this embedding. In particular, for $\alpha\in\overline{k}$, we realize $|\alpha|_v:=|\iota(\alpha)|_v$. The $v$-adic Tate algebra is denoted by
    \[
        \mathbb{T}_v:=\{\sum_{i\geq 0}a_it^i\in\mathbb{C}_v\llbracket t \rrbracket\mid\lim_{i\to\infty}|a_i|_v=0\},
    \]
    which is complete with respect to the $v$-adic Gauss norm $\|\cdot\|_v$ defined by 
    \[
        \|\sum_{i\geq 0}a_it^i\|_v:=\sup_{i\geq 0}|a_i|_v.
    \]
    The ring of $v$-adic entire series is denoted by 
    \[
        \mathcal{E}_v:=\{\sum_{i=0}^{\infty}a_it^i\in\overline{k}\llbracket t \rrbracket\mid \lim_{i\to\infty}|a_i|_v^{1/i}=0,[k_v(a_0,a_1,\dots):k_v]<\infty\}.
    \]
    By abuse of the notation, for $B=(B_{ij})\in\Mat_{m\times n}(\mathbb{T}_v)$, we set $\|B\|_v:=\max_{i,j}\{\|B_{ij}\|_v\}$.

\subsection{ABP criterion}
    In \cite{ABP04}, Anderson, Brownawell, and Papanikolas established a linear independence criterion for special values in global function fields. 
    Later on, Chang generalized this criterion into a more general situation in \cite{Cha09} which can be regarded as a function field analogue of the Siegel-Shidlovskii theorem for classical $E$-functions. Many applications such as determining all algebraic relations among special $\Gamma$-values \cite{ABP04}, algebraic independence of Carlitz logarithms at algebraic points \cite{Pap08} and algebraic independence of zeta values in positive characteristic \cite{CY07,CPY11} can be achieved.

    For the convenience of our later use, given an algebraically closed field $\mathbb{K}$ that contains $\mathbb{F}_q$, consider the Laurent series field $\mathbb{K}(\!(t)\!)$. For $n\in\mathbb{Z}$, we define the $n$-fold Frobenius twisting by
    \begin{align*}
        \mathbb{K}(\!(t)\!)&\to\mathbb{K}(\!(t)\!)\\
        f=\sum_{i>N}a_it^i&\mapsto f^{(n)}:=\sum_{i>N}a_i^{q^n}t^i.
    \end{align*}
    In the present article, we only consider the case of $\mathbb{K}=\mathbb{C}_\infty$ or $\mathbb{K}=\mathbb{C}_v$. The refined version of the ABP linear independence criterion can be stated as follows.
    
    \begin{theorem}[{\cite[Thm.~3.1.1]{ABP04}},~{\cite[Thm.~1.2.(1)]{Cha09}}]\label{Thm:ABP}
        Fix a square matrix $\Phi(t)\in\Mat_{\ell}(\overline{k}[t])$ and a vector $\psi(t)\in\Mat_{\ell\times 1}(\mathcal{E})$
        satisfying $\psi^{(-1)}=\Phi\psi$.
        Let $\gamma\in\overline{k}^\times\setminus\overline{\mathbb{F}}_q^\times$ be such that $\det\Phi(\gamma^{(-i)})\neq 0$ for all $i\in\mathbb{Z}_{>0}.$ Then, for any $\rho\in\Mat_{1\times\ell}(\overline{k})$ such that $\rho\psi(\gamma)=0$, there exists $P(t)\in\Mat_{1\times\ell}(\overline{k}[t])$ such that $P(\gamma)=\rho$ and $P\psi=0$.
    \end{theorem}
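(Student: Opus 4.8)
The plan is to follow the architecture of \cite[Sec.~3]{ABP04}, incorporating the refinement of \cite{Cha09}. Put
\[
    \mathfrak{M}:=\{P\in\Mat_{1\times\ell}(\overline{k}[t])\mid P\psi=0\},
\]
a $\overline{k}[t]$-module which is stable under the operator $P\mapsto P^{(-1)}\Phi$, since $(P^{(-1)}\Phi)\psi=P^{(-1)}(\Phi\psi)=P^{(-1)}\psi^{(-1)}=(P\psi)^{(-1)}$. The inclusion $\{P(\gamma)\mid P\in\mathfrak{M}\}\subseteq\{\rho\mid\rho\psi(\gamma)=0\}$ is immediate; the content is the reverse inclusion, so fix a nonzero $\rho$ with $\rho\psi(\gamma)=0$ and aim to produce $P\in\mathfrak{M}$ with $P(\gamma)=\rho$.

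First I would propagate the relation to every Frobenius twist of $\gamma$. Extracting $q$-th roots gives $(\rho\psi(\gamma))^{1/q}=\rho^{(-1)}\psi^{(-1)}(\gamma^{(-1)})=\rho^{(-1)}\Phi(\gamma^{(-1)})\psi(\gamma^{(-1)})$, and as this vanishes while $\Phi(\gamma^{(-1)})$ is invertible, $\rho_{1}:=\rho^{(-1)}\Phi(\gamma^{(-1)})$ is a nonzero row vector with $\rho_{1}\psi(\gamma^{(-1)})=0$. Iterating, $\rho_{i}:=\rho_{i-1}^{(-1)}\Phi(\gamma^{(-i)})$ (with $\rho_{0}=\rho$) is nonzero and satisfies $\rho_{i}\psi(\gamma^{(-i)})=0$ for all $i\geq 0$; the hypothesis $\det\Phi(\gamma^{(-i)})\neq 0$ is precisely what prevents this sequence from collapsing to $0$.

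The analytic core is a Siegel's-lemma construction disciplined by a vanishing theorem. For a large integer $N$ one seeks $P=\rho+(t-\gamma)Q$ with $Q\in\Mat_{1\times\ell}(\overline{k}[t])$ of degree $\leq N$ (so that $P(\gamma)=\rho$ automatically) such that the entire function $E:=P\psi\in\mathcal{E}$ vanishes to order comparable to $N$ at $t=\gamma$. The order-zero coefficient of $E$ at $\gamma$ is already $\rho\psi(\gamma)=0$, so this is a system of $\sim N$ linear conditions on the $\sim\ell N$ coefficients of $Q$; by the pigeonhole principle (Siegel's lemma) it has a solution whose coefficients have $|\cdot|_\infty$ bounded polynomially in $N$, once consistency is arranged --- and here one invokes the module structure of $\mathfrak{M}$ together with the propagated data $\rho_{0},\rho_{1},\dots$ (first passing through the easier assertion $\mathfrak{M}\neq\{0\}$). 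One then shows $E\equiv 0$: as $E\in\mathcal{E}$ is entire, the non-archimedean theory of Newton polygons bounds its number of zeros in the disk of radius $\max(1,|\gamma|_\infty)$ in terms of $\deg P$ and the height of $P$; meanwhile the functional equation $E^{(-1)}=P^{(-1)}\Phi\psi$ and the invertibility of the $\Phi(\gamma^{(-i)})$ let one transfer the high-order vanishing at $\gamma$ to the twists $\gamma^{(-i)}$ and bootstrap its order, while a Liouville inequality --- each nonzero value $E(\gamma^{(-i)})$ lies in the global function field $k(\gamma)$, so the product formula bounds $|E(\gamma^{(-i)})|_\infty$ below by a negative power of its height --- controls the values from below. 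For $N$ large these bounds are incompatible, forcing $E\equiv 0$, hence $P\psi=0$ with $P(\gamma)=\rho$.

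The step I expect to be the main obstacle is this vanishing theorem, and inside it the simultaneous optimization of parameters: the number of imposed vanishing conditions must be large enough to force $E\equiv 0$, yet small enough that Siegel's lemma still returns a solution of height compatible with the Liouville and Newton-polygon estimates. The hypotheses enter exactly here --- $\gamma\notin\overline{\mathbb{F}}_{q}^{\times}$ guarantees the twists $\gamma^{(-i)}$ are genuinely distinct so that the growth estimates are non-degenerate, and $\det\Phi(\gamma^{(-i)})\neq 0$ makes the relation propagate cleanly --- and the improvement of \cite{Cha09} over \cite{ABP04} lies in analyzing $\mathfrak{M}$ carefully enough that these two hypotheses alone suffice. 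A secondary but pervasive difficulty is the bookkeeping of Frobenius twists, which must be tracked consistently across $\psi$, $\Phi$, and the evaluation maps.
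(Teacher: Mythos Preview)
Your overall architecture---Siegel's lemma plus a vanishing theorem driven by Liouville lower bounds against analytic upper bounds---matches \cite{ABP04}, and the propagation of $\rho\psi(\gamma)=0$ to the twists $\gamma^{(-i)}$ is correct and is used. The gap is in the specific auxiliary construction. You propose to make $E=P\psi$ vanish to high order at the single point $t=\gamma$, but this cannot force $E\equiv 0$: an entire function in $\mathcal{E}$ may vanish to arbitrarily high order at a point (e.g.\ $\Omega(t)^{M}$ at $t=\theta^{q}$), and the Newton-polygon/Schwarz--Jensen machinery relates the sup norm on a disk to the \emph{distinct} zeros inside it, not to a multiplicity at one point. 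Your transfer mechanism does not repair this: from $E^{(-1)}=P^{(-1)}\Phi\psi$ one learns only that the \emph{different} function $(P^{(-1)}\Phi)\psi$ vanishes to high order at $\gamma^{(-1)}$, not that $E$ itself does, so no bootstrapping of zeros of $E$ follows. (Your Liouville step is also misplaced: you phrase it as a lower bound on nonzero values $E(\gamma^{(-i)})$, but in any working scheme those values are arranged to be zero; the Liouville inequality has to be applied to a different quantity.)

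What the proof in \cite{ABP04}---mirrored verbatim in the paper's appendix for the $v$-adic analogue---does instead is construct an auxiliary row $h_N\in\Mat_{1\times\ell}(\overline{k}[t])$ of degree $<(1-\tfrac{1}{2\ell})N$, \emph{without} normalizing its value at $\gamma$, so that $E_N:=h_N\psi$ vanishes at the $N$ \emph{distinct} points $\gamma^{q^{-(N+\mu)}}$, $0\le\mu<N$. This vanishing is imposed indirectly by requiring $h_N(\sigma^{0}\Phi^{\ad})\cdots(\sigma^{N+\mu-1}\Phi^{\ad})(\sigma^{N+\mu}\vartheta)$ to vanish at $t=\gamma^{q^{-(N+\mu)}}$, where $\vartheta\in\Mat_{\ell\times(\ell-1)}(\overline{k})$ has maximal rank and $\rho\vartheta=0$; the adjugate identity together with $\det\Phi(\gamma^{(-i)})\neq 0$ and the hypothesis $\rho\psi(\gamma)=0$ then yields $E_N(\gamma^{q^{-(N+\mu)}})=0$. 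Schwarz--Jensen now forces the leading Taylor coefficient $\lambda_N$ of $E_N$ to decay exponentially in $N$ (here $\gamma\notin\overline{\mathbb{F}}_q$ makes the twists genuinely distinct), while a second Siegel step produces a bounded-height $\overline{k}[t]$-linear relation among $E_N,\sigma E_N,\dots,\sigma^{\ell}E_N$ whose constant term gives a polynomial over $R_1$ annihilating $\lambda_N$, hence a Liouville lower bound $|\lambda_N|^{-1}=O(1)$. The clash forces $E_N\equiv 0$ for some $N$; only then is $P$ extracted as $P:=(\sigma^{-(N+\mu)}h_N)(\sigma^{-(N+\mu)}\Phi^{\ad})\cdots(\sigma^{-1}\Phi^{\ad})$ for a suitable $\mu$, and one checks $P(\gamma)\vartheta=0$ so that $P(\gamma)$ is a nonzero scalar multiple of $\rho$. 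So the normalization $P(\gamma)=\rho$ is obtained at the end by rescaling, not imposed at the start.
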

    
    We find that the same spirit of Theorem~\ref{Thm:ABP} can be carried out to the following $v$-adic setting, which can be used for our purpose.
    
    \begin{theorem}\label{Thm:vABP}
        Fix a square matrix $\Phi(t)\in\Mat_{\ell}(\overline{k}[t])$ and a vector $\psi(t)\in\Mat_{\ell\times 1}(\mathcal{E}_v)$ satisfying $\psi^{(-1)}=\Phi\psi.$
        Let $\gamma\in\overline{k}^\times\setminus\overline{\mathbb{F}}_q^\times$ such that $\det\Phi(\gamma^{(-1)})\neq 0$ for all $i\in\mathbb{Z}_{>0}$. Then, for $\rho\in\Mat_{1\times\ell}(\overline{k})$ such that $\rho\psi(\gamma)=0$, there exists $P(t)\in\Mat_{1\times\ell}(\overline{k}[t])$ such that
        $P(\gamma)=\rho$ and $P\psi=0$.
    \end{theorem}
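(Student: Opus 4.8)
The plan is to mirror the proof of the $\infty$-adic criterion in \cite[Sec.~3]{ABP04}, replacing $\mathbb{C}_\infty$ by $\mathbb{C}_v$ and $\mathcal{E}$ by $\mathcal{E}_v$ throughout, and to check that every analytic estimate used there survives the switch to the $v$-adic absolute value. Concretely, one first reduces to the case $\rho\neq 0$ and normalizes so that some entry of $\rho$ equals $1$. The heart of the argument is a \emph{vanishing-to-interpolation} dichotomy: assuming no such polynomial vector $P$ exists, one constructs, for each large parameter, an auxiliary function by solving a linear system of Siegel type whose unknowns are the coefficients of a row vector $E(t)\in\Mat_{1\times\ell}(\overline{k}[t])$ of controlled degree, arranged so that $f:=E\psi\in\mathcal{E}_v$ vanishes to high order at the twists $\gamma,\gamma^{(-1)},\gamma^{(-2)},\dots$ (using the functional equation $\psi^{(-1)}=\Phi\psi$ together with $\det\Phi(\gamma^{(-i)})\neq 0$ to propagate vanishing along the Frobenius orbit). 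The hypothesis $\gamma\notin\overline{\mathbb{F}}_q^\times$ guarantees these twists are pairwise distinct and escape to the boundary, which is exactly what makes the orbit infinite and the interpolation nontrivial.

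The next step is the quantitative dichotomy. On one hand, since $f\in\mathcal{E}_v$, the coefficients $a_i$ of $f$ satisfy $|a_i|_v^{1/i}\to 0$, so $f$ is a $v$-adic entire function and its values are governed by a $v$-adic maximum-modulus / Schwarz-type estimate: the order of vanishing of a nonzero $v$-adic entire function on a disc of radius $R$ is bounded in terms of $\log\|f\|$ on a slightly larger disc. Combining this with the size estimates coming from the Siegel-type construction (heights of the coefficients of $E$, the archimedean-free product formula over the number field — here, function field — $k_v(a_0,a_1,\dots)$, which is finite by the definition of $\mathcal{E}_v$), one derives a contradiction \emph{unless} $f\equiv 0$. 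The only genuinely $v$-adic input is that all the relevant inequalities — the nonarchimedean triangle inequality, the Gauss-norm multiplicativity on $\mathbb{T}_v$, and the entire-function growth bound — hold verbatim, indeed more cleanly, in the ultrametric setting; in particular there is no loss from archimedean absolute values, so the $\infty$-adic estimates carry over with the same (or better) constants.

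Once $f=E\psi\equiv 0$ with $E\neq 0$ and $E(\gamma)$ a prescribed multiple of $\rho$, one is almost done; a short linear-algebra cleanup (dividing out the gcd of the entries of $E$, or rescaling) produces $P$ with $P\psi=0$ and $P(\gamma)=\rho$ exactly, which is the assertion. I expect the main obstacle to be verifying the $v$-adic entire-function estimate with the degree and height bookkeeping matched to the Siegel construction: one must ensure that $f$, built from $\psi\in\Mat_{\ell\times 1}(\mathcal{E}_v)$ and a polynomial vector over $\overline{k}$, again lies in $\mathcal{E}_v$ (so that $[k_v(\text{coeffs}):k_v]<\infty$ is preserved — this uses that only finitely many algebraic quantities enter) and that the growth of $\|f\|_v$ on discs of radius $\sim|\gamma|_v^{-N}$ is small enough to beat the number of vanishing conditions. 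Since the excerpt explicitly says the argument is parallel to the $\infty$-adic one with details deferred to the appendix, the proposal is simply to transcribe \cite[Thm.~3.1.1]{ABP04} (in Chang's refined form \cite[Thm.~1.2.(1)]{Cha09}) line by line, flag at each analytic step that the nonarchimedean inequality is at least as strong, and record the one structural point that $\mathcal{E}_v$ is closed under multiplication by $\overline{k}[t]$ and under the operations used to propagate vanishing along the $\gamma^{(-i)}$.
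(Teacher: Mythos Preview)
Your proposal is correct and follows essentially the same route as the paper's appendix: a Thue--Siegel auxiliary construction over the integral closure $R_1$, the $v$-adic Schwarz--Jensen estimate, a Liouville-type lower bound, and the resulting dichotomy forcing $E_N=h_N\psi\equiv 0$. Two small caveats in your summary: the Frobenius orbit $\gamma^{q^{-i}}$ does \emph{not} escape to the boundary but accumulates inside a bounded disc (which is precisely why infinitely many zeros force vanishing), and the passage from $h_N\psi\equiv 0$ to $P$ with $P(\gamma)=\rho$ is not just a gcd/rescaling cleanup but requires twisting $h_N$ and multiplying by a product of adjugates $(\sigma^{-j}\Phi^{\mathrm{ad}})$ as in \cite[Sec.~3]{ABP04}; since you plan to transcribe that argument line by line, both points will be handled automatically.
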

    
    As we pointed out in the introduction, the structure of the proof for Theorem~\ref{Thm:vABP} is essentially the same as the proof of Theorem~\ref{Thm:ABP}. To be self-contained, we provide detailed proof in the appendix.

\section{Main results and its applications}
\subsection{The graded space of MPLs}
    As an application of Theorem \ref{Thm:ABP}, Chang introduced the notion of MZ (multizeta) property in \cite{Cha14} and developed a linear independence criterion of values with this property. In this subsection, we follow Chang's strategy \cite{Cha14} to formulate a generalized version of MZ property and then use Theorem \ref{Thm:vABP} to establish a $v$-adic linear independence criterion. To begin with, we introduce the following series which plays a crucial role for our later study.
    
    \begin{definition}
        Let $\alpha\in\mathbb{C}_v^\times$ with $|\alpha|_v<1$. We define the $v$-adic Anderson-Thakur series associated to $\alpha$ to be the power series
        \[
            \Omega_\alpha(t):=\prod_{i\geq 1}(1-\alpha^{q^i}t)\in\mathbb{C}_v\llbracket t \rrbracket.
        \]
    \end{definition}
    
    \begin{remark}
        Note that in the $\infty$-adic case, if we choose $\alpha=\varpi_\infty:=1/\theta$, then the power series 
        \[
            \Omega_{\varpi_\infty}(t):=\prod_{i=1}^\infty(1-\frac{t}{\theta^{q^i}})\in\mathcal{E}
        \]
        induces an entire function on $\mathbb{C}_\infty$.
        It is essentially the same function
        \[
            \Omega(t):=(-\theta)^{-q/(q-1)}\Omega_{\varpi_\infty}(t)\in\mathcal{E}
        \]
        defined in \cite[Sec.~3.1.2]{ABP04}. Note that
        \[
            \frac{-1}{\Omega(\theta)}=\frac{-1}{(-\theta)^{-q/(q-1)}\Omega_{\varpi_\infty}(\varpi_\infty^{-1})}=:\Tilde{\pi}
        \]
        is a period of the Carlitz module (see \cite[Cor.5.1.4]{ABP04}).
    \end{remark}
    
    The basic property of $\Omega_\alpha$ is established in the following proposition.
        
    \begin{proposition}[cf.~{\cite[Sec.~3.1.2]{ABP04}}]\label{Prop:Basic_Ex}
        Let $\alpha\in\overline{k}^\times$ such that $|\alpha|_v<1$. Then we have
        \[
            \Omega_\alpha\in\mathcal{E}_v~\mbox{ and }~\Omega_\alpha^{(-1)}=(1-\alpha t)\Omega_\alpha.
        \]
        In particular, $\Tilde{\pi}_\alpha:=\Omega_\alpha(\alpha^{-1})\in\overline{k}_v^\times$ is transcendental over $k$.
    \end{proposition}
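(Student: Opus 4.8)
The plan is to verify the three claims of Proposition~\ref{Prop:Basic_Ex} in turn, all of which are formal consequences of the product formula $\Omega_\alpha(t)=\prod_{i\geq 1}(1-\alpha^{q^i}t)$ together with the hypothesis $|\alpha|_v<1$. First I would expand the product into a power series $\Omega_\alpha(t)=\sum_{j\geq 0}c_j t^j$ and estimate the coefficients: $c_j$ is (up to sign) the $j$-th elementary symmetric function of $\{\alpha^{q^i}\}_{i\geq 1}$, hence a sum of monomials each of which is $\alpha$ raised to a sum of $j$ distinct powers $q^{i_1},\dots,q^{i_j}$ with $i_1<\cdots<i_j$. The minimal such exponent is $q^1+q^2+\cdots+q^j$, which grows like $q^{j+1}/(q-1)$, so by the ultrametric inequality $|c_j|_v\leq |\alpha|_v^{q+q^2+\cdots+q^j}$, and since $|\alpha|_v<1$ this gives $|c_j|_v^{1/j}\to 0$. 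Because $\alpha\in\overline{k}$, all coefficients lie in the finite extension $k_v(\alpha)$ of $k_v$, so $\Omega_\alpha\in\mathcal{E}_v$ as claimed. (If one wants the membership in $\mathbb{C}_v\llbracket t\rrbracket$ stated in the definition to be upgraded, the same estimate shows the product converges coefficientwise.)

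Next I would establish the functional equation $\Omega_\alpha^{(-1)}=(1-\alpha t)\Omega_\alpha$. Applying the inverse Frobenius twist $f\mapsto f^{(-1)}$, which raises each coefficient to the $1/q$ power and fixes $t$, to the product $\prod_{i\geq 1}(1-\alpha^{q^i}t)$ gives $\prod_{i\geq 1}(1-\alpha^{q^{i-1}}t)=\prod_{i\geq 0}(1-\alpha^{q^i}t)=(1-\alpha t)\prod_{i\geq 1}(1-\alpha^{q^i}t)$, which is exactly $(1-\alpha t)\Omega_\alpha$. One should note that the twisting is continuous for the relevant topology (it is an isometry up to the $q$-th power normalization on coefficients), so it commutes with the infinite product; this is the only mild analytic point. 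Here I would take care to match the twisting convention used earlier in the excerpt, where $f^{(n)}$ raises coefficients to the $q^n$ power, so that $f^{(-1)}$ takes $q$-th roots.

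Finally, for the transcendence of $\tilde\pi_\alpha:=\Omega_\alpha(\alpha^{-1})$ over $k$, I would feed the functional equation into the $v$-adic ABP criterion, Theorem~\ref{Thm:vABP}. Take $\ell=1$, $\Phi(t)=(1-\alpha t)\in\Mat_1(\overline{k}[t])$, and $\psi(t)=\Omega_\alpha(t)\in\Mat_{1\times1}(\mathcal{E}_v)$, which satisfies $\psi^{(-1)}=\Phi\psi$ by the previous step. Set $\gamma=\alpha^{-1}\in\overline{k}^\times$; one checks $\gamma\notin\overline{\mathbb{F}}_q^\times$ since otherwise $\alpha$ would be a root of unity, contradicting $|\alpha|_v<1$ (which forces $|\alpha|_v$ to be a nontrivial power of $q$), and $\det\Phi(\gamma^{(-i)})=1-\alpha\cdot\alpha^{-q^{-i}}\neq 0$ for all $i>0$ because $\alpha^{1-q^{-i}}=1$ would again force $\alpha$ to be a root of unity. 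Now suppose $\tilde\pi_\alpha$ were algebraic over $k$, hence over $\overline{\mathbb{F}}_q(\theta)$; then there is a nonzero $\rho\in\overline{k}$ with $\rho\,\psi(\gamma)=0$ would be the wrong formulation — instead the standard move is: if $\tilde\pi_\alpha\in\overline{k}$, apply Theorem~\ref{Thm:vABP} to the pair with the vector $(\psi,1)^{\mathrm{tr}}$ and $\rho=(1,-\tilde\pi_\alpha)$, obtaining $P(t)=(P_1(t),P_2(t))\in\Mat_{1\times2}(\overline{k}[t])$ with $P(\gamma)=\rho$ and $P_1\Omega_\alpha+P_2=0$ as functions; evaluating the resulting polynomial identity and comparing with the known infinitely many zeros of $\Omega_\alpha$ (at $t=\alpha^{-q^i}$, $i\geq 1$) forces $P_1=0$, hence $P_2=0$, contradicting $P_2(\gamma)=-\tilde\pi_\alpha\neq0$. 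The main obstacle is purely bookkeeping: assembling the correct augmented $\Phi$ and $\psi$ so that Theorem~\ref{Thm:vABP} applies cleanly, and verifying the nonvanishing of $\tilde\pi_\alpha$ itself (which follows since the product $\prod_{i\geq1}(1-\alpha^{q^i-1})$ has all factors of $v$-adic absolute value $1$, being $1$ plus something of absolute value $<1$, so the product is a unit). Everything else is a direct translation of the $\infty$-adic argument of \cite[Sec.~3.1.2]{ABP04} to the $v$-adic setting now that Theorem~\ref{Thm:vABP} is available.
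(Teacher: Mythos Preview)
Your argument is correct in substance, and for the first two claims it is essentially the paper's argument (direct estimate on the product, then a one-line twist computation); your coefficient bound via elementary symmetric functions is in fact cleaner than the paper's, which instead reads off a recursion $c_i^{1/q}=c_i-\alpha c_{i-1}$ from the functional equation to get $c_i\in\overline{k}$.

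For the transcendence of $\tilde\pi_\alpha$ you take a genuinely different, more economical route. The paper assumes a minimal relation $\sum_{i=0}^{\ell}\rho_i\tilde\pi_\alpha^{\,i}=0$ with $\rho_0\rho_\ell\neq 0$, builds the $(\ell+1)$-dimensional system $\psi=(1,\Omega_\alpha,\dots,\Omega_\alpha^{\ell})^{\tr}$, $\Phi=\diag(1,(1-\alpha t),\dots,(1-\alpha t)^{\ell})$, and then specializes the resulting relation $\sum_i P_i\Omega_\alpha^{\,i}=0$ at the zeros $t=\alpha^{-q^N}$ of $\Omega_\alpha$ to kill $P_0$, contradicting $\rho_0\neq 0$. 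Your approach instead observes that if $\tilde\pi_\alpha\in\overline{k}$ then already the $2$-dimensional system $\psi=(\Omega_\alpha,1)^{\tr}$, $\Phi=\diag((1-\alpha t),1)$, $\rho=(1,-\tilde\pi_\alpha)$ satisfies the hypotheses of Theorem~\ref{Thm:vABP}, which is shorter and avoids powers of $\Omega_\alpha$ entirely. One small slip: from $P_1\Omega_\alpha+P_2=0$, evaluating at the zeros $t=\alpha^{-q^i}$ of $\Omega_\alpha$ forces $P_2(\alpha^{-q^i})=0$ for all $i\geq 1$, hence $P_2\equiv 0$ \emph{first}; the contradiction with $P_2(\alpha^{-1})=-\tilde\pi_\alpha\neq 0$ then follows immediately (after which $P_1\equiv 0$ is a consequence, not a hypothesis). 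Swapping the order of ``$P_1=0$, hence $P_2=0$'' in your write-up fixes this.
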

        
    \begin{proof}
        Note that for any sequence $\{a_n\}_{n\geq 1}\subset\mathbb{C}_v$, $\prod_{n\geq 1}(1+a_n)$ converges in $\mathbb{C}_v$ if and only if $\lim_{n\to\infty}|a_n|_v=0$. Since for each $x\in\mathbb{C}_v$ we have $\lim_{i\to\infty}|\alpha^{q_v^i}x|_v=0$, $\Omega_\alpha$ defines an entire function on $\mathbb{C}_v$ . 
        On the other hand, if we express $\Omega_\alpha(t)=\sum_{i\geq 0}c_it^i$ for $c_i\in \overline{k}_v^\times$, then the functional equation of $\Omega_\alpha$ implies that
        \[
            c_0^{q^{-1}}+\alpha^{-1}c_0=0\mbox{ and } c_i^{q^{-1}}+\alpha^{-1}c_i=c_{i-1}\mbox{ for }i>0.
        \]
        This implies that $\Omega_\alpha\in\overline{k}\llbracket t\rrbracket$ and hence the first assertion $\Omega_\alpha\in\mathcal{E}_v$ follows. 
        Note that the difference equation $\Omega_\alpha^{(-1)}=(1-\alpha t)\Omega_\alpha$ follows immediately from direct computations. 
        
        To prove the transcendence of $\Tilde{\pi}_\alpha$, we suppose on the contrary that $\Tilde{\pi}_\alpha\in\overline{k}$. Thus, there exists $\{\rho_i\}_{i=0}^\ell\subset k$ such that 
        $$\sum_{i=0}^\ell\rho_i\Tilde{\pi}_\alpha^i=0,~(\rho_0\rho_\ell\neq 0,~\ell>0).$$ 
        Let 
        $$\rho:=(\rho_0,\dots,\rho_\ell)\in\Mat_{1\times (\ell+1)}(\overline{k})$$
        and 
        $$\psi(t):=(1,\Omega_\alpha,\dots,\Omega_\alpha^\ell)^\tr\in\Mat_{(\ell+1)\times 1}(\mathcal{E}_v).$$ 
        Then $\rho\psi(\alpha^{-1})=0$ and $\psi^{(-1)}=\Phi\psi$, where
        $$\Phi=
        \begin{pmatrix}
            1 & 0 & \cdots & 0 \\
            0 & (1-\alpha t) &  & \vdots\\
            \vdots &  & \ddots & 0\\
            0 & \cdots & 0 & (1-\alpha t)^\ell
        \end{pmatrix}
        \in\Mat_{(\ell+1)\times(\ell+1)}(\overline{k}[t]).$$ 
        By Theorem~\ref{Thm:vABP}, there exists $P(t)=(P_0(t),\dots,P_\ell(t))\in\Mat_{1\times(\ell+1)}(\overline{k}[t])$ such that 
        $$P_i(\alpha^{-1})=\rho_i~\mbox{ and }~\sum_{i=0}^\ell P_i\Omega_\alpha^i=0.$$ 
        Since $\Omega_\alpha(t)$ has a simple zero at $\alpha^{-q^i}$ for all $i\in\mathbb{Z}_{>0}$, specializing at $t=\alpha^{-q^{i}}$ of $\sum_{i=0}^\ell P_i\Omega_\alpha^i=0$, we obtain that $P_0(\alpha^{-q^i})=0$ for all $i\in\mathbb{Z}_{>0}$. Thus, $P_0(t)$ vanishes identically and it contradicts to the assumption $\rho_0=P_0(\alpha^{-1})\neq 0$.
    \end{proof}
    
    Now, we are ready to formulate a generalized version of the MZ property.
    
    \begin{definition}[cf.~{\cite[Def.~3.4.1]{Cha14}}]\label{Def:MZ}
        Let $0\neq f(t)\in\overline{k}[t]$, $\alpha\in\overline{k}^\times$ such that $|\alpha|_v<1$. A nonzero element $Z\in\overline{k}_v^\times$ is said to have the $f(t)$-type MPL~(multipolylog)~property of weight $w$ with respect to $\alpha$ if there exists $c\in\overline{k}^\times$, $\Phi(t)\in\Mat_{\ell\times\ell}(\overline{k}[t])$ and $\psi(t)\in\Mat_{\ell\times 1}(\mathcal{E}_v)$ with $\ell\geq 2$ so that
        \begin{enumerate}
            \item $\psi^{(-1)}=\Phi\psi$ and $\Phi$ satisfy the conditions of Theorem~\ref{Thm:vABP} with parameter $\gamma=\alpha^{-1}$.
            \item The last column of $\Phi$ is of the form $(0,\dots,0,f(t))^\tr$.
            \item The first and the last entry of $\psi(\alpha^{-1})$ are $\Tilde{\pi}_\alpha^w$ and $cZ\Tilde{\pi}_\alpha^w$, respectively.
            \item For any positive integer $N$, $\psi(\alpha^{-q^N})$ is of the form $(0,\dots,0,(cZ\Tilde{\pi}_\alpha^w)^{q^N})^\tr$.
        \end{enumerate}
    \end{definition}
    
    In what follows, we introduce the notion of the graded MPL system to study $\overline{k}$-linear relations among the values in $\overline{k}^\times_v$ satisfying the MPL property.
    For column vectors $\psi_1,\dots,\psi_m$ with entries in $\mathbb{C}_v\llbracket t\rrbracket$, we define 
    $$\oplus_{j=1}^m\psi_j:=(\psi_1^\tr,\dots,\psi_m^\tr)^\tr.$$ Finally, for square matrices $\Phi_1,\dots,\Phi_m$ with entries in $\mathbb{C}_v\llbracket t\rrbracket$, we define $\oplus_{j=1}^m\Phi_j$ to be the block diagonal square matrix
    $$
    \begin{pmatrix}
        \Phi_1 &  &  \\
         & \ddots &  \\
         &  & \Phi_m
    \end{pmatrix}.
    $$
    
    \begin{definition}\label{Def:MZ_system}
        Let $\{f_w\}_{w\geq 1}\subset\overline{k}[t]$ be a sequence of nonzero polynomials and $\alpha\in\overline{k}^\times$ such that $|\alpha|_v<1$. Let $V_w$ be a set of elements in $\overline{k}^\times_v$ having the $f_w(t)$-type MPL property of weight $w$ with respect to $\alpha$. We set $G_w$ to be the $k(\alpha)$-vector space spanned by elements in $V_w$ for all $w\geq 1$ and let $G=k(\alpha)+\sum_{w\geq 1}G_w$. Then, the tuple $(G,\{G_w\},\{f_w\},\{V_w\},\alpha)$ is called a $v$-adic graded MPL system.
    \end{definition}
    
    The first goal in this section is to prove the following.
    
    \begin{theorem}\label{Thm:GMZ}
         Let $(G,\{G_w\},\{f_w\},\{V_w\},\alpha)$ be a $v$-adic graded MPL system.
         \begin{enumerate}
             \item Let $\overline{G}_w$ be the $\overline{k}$-vector space spanned by elements in $V_w$ for all $w\geq 1$ and let $\overline{G}=\overline{k}+\sum_{w\geq 1}\overline{G}_w$. Then, $\overline{G}=\overline{k}\oplus_{w\geq 1}\overline{G}_w$ forms a graded $\overline{k}$-vector space, and the canonical map $\overline{k}\otimes_{k(\alpha)} G\to \overline{G}$ is bijective.
             \item If $G_{w_1}G_{w_2}\subset G_{w_1+w_2}$ for all $w_1\geq 1$ and $w_2\geq 1$, then $\overline{G}$ forms a graded $\overline{k}$-algebra.
         \end{enumerate}
    \end{theorem}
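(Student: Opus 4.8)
The plan is to mimic the $\infty$-adic argument of \cite[Sec.~3.4]{Cha14}, with the $v$-adic ABP criterion (Theorem~\ref{Thm:vABP}) playing the role of Theorem~\ref{Thm:ABP}. Part (2) is essentially formal once part (1) is established: if $\overline{k}\otimes_{k(\alpha)}G\to\overline{G}$ is bijective and $G$ is graded with $G_{w_1}G_{w_2}\subset G_{w_1+w_2}$, then $\overline{G}$ inherits the grading and the algebra structure by base change. So the heart of the matter is part (1), and within that the only real content is injectivity of the canonical map $\overline{k}\otimes_{k(\alpha)}G\to\overline{G}$, equivalently: any $\overline{k}$-linear relation among the displayed generators (the $1$ together with finitely many elements $Z\in\bigcup_w V_w$) is already a $k(\alpha)$-linear relation, and moreover relations respect the weight grading.

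The main step proceeds as follows. Suppose we are given finitely many values $Z_1,\dots,Z_n$, with $Z_j\in V_{w_j}$ having the $f_{w_j}(t)$-type MPL property with respect to $\alpha$, together with constants $c_j\in\overline{k}^\times$ and data $\Phi_j\in\Mat_{\ell_j}(\overline{k}[t])$, $\psi_j\in\Mat_{\ell_j\times 1}(\mathcal{E}_v)$ as in Definition~\ref{Def:MZ}, and suppose $\sum_j a_j Z_j = a_0$ for some $a_j\in\overline{k}$ (the general relation; I would first reduce to showing homogeneity, then handle the isobaric case, exactly as in \cite{Cha14}). First I would normalize by the periods $\tilde\pi_\alpha^{w_j}$: since the first entry of $\psi_j(\alpha^{-1})$ is $\tilde\pi_\alpha^{w_j}$ and the last is $c_j Z_j\tilde\pi_\alpha^{w_j}$, I assemble a single pair $(\Phi,\psi)$ by taking $\Phi = (\oplus_j \Phi_j)\oplus (\text{a }1\times 1\text{ block }1)$ and $\psi = (\oplus_j\psi_j)\oplus(1)$ — or, better, following Chang, build a block matrix whose fibre at $\gamma=\alpha^{-1}$ encodes the vector $(1,\tilde\pi_\alpha,\ldots)$ and the $Z_j\tilde\pi_\alpha^{w_j}$ simultaneously — and encode the assumed relation as a vector $\rho\in\Mat_{1\times\ell}(\overline{k})$ with $\rho\psi(\alpha^{-1})=0$. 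The hypotheses (1)--(4) of Definition~\ref{Def:MZ} guarantee that $\Phi$ satisfies the non-vanishing condition $\det\Phi(\gamma^{(-i)})\ne 0$ of Theorem~\ref{Thm:vABP} (this is exactly where conditions (2) and (4), specifying the last column of $\Phi$ and the shape of $\psi(\alpha^{-q^N})$, get used). Applying Theorem~\ref{Thm:vABP}, I obtain $P(t)\in\Mat_{1\times\ell}(\overline{k}[t])$ with $P(\alpha^{-1})=\rho$ and $P\psi=0$ identically.

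The final step extracts the conclusion from the functional identity $P\psi=0$. Specializing $P\psi=0$ at $t=\alpha^{-q^N}$ and using condition (4) — that $\psi_j(\alpha^{-q^N})$ has all entries zero except the last, which is $(c_jZ_j\tilde\pi_\alpha^{w_j})^{q^N}$ — I get, for every $N\ge 1$, a polynomial identity in the coefficients of $P$ evaluated at $\alpha^{-q^N}$ that forces (after dividing by the appropriate power of $\tilde\pi_\alpha^{q^N}$ and letting $N$ vary over infinitely many values, so that a nonzero polynomial cannot vanish at all $\alpha^{-q^N}$) the relevant components of $P$ to vanish identically, and then evaluating the surviving components at $t=\alpha^{-1}$ recovers that the $a_j$'s satisfy a relation with coefficients in $k(\alpha)$ — indeed the $P_i$ are polynomials over $\overline{k}$ but the vanishing-at-all-$\alpha^{-q^N}$ combined with the one specialization at $\alpha^{-1}$ pins the surviving linear combination down to $k(\alpha)$-coefficients, and a weight bookkeeping (tracking the exponent of $\tilde\pi_\alpha$, which is where the grading comes from) shows the original relation decomposes into isobaric pieces. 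The main obstacle is purely administrative: setting up the block matrix $\Phi$ and the vector $\psi$ so that \emph{all} of conditions (1)--(4) of the MPL property combine correctly to verify the determinant hypothesis of Theorem~\ref{Thm:vABP} and so that the weight grading is visible in the period exponents; the analytic input (convergence, entireness) is already packaged into the statement $\psi\in\Mat_{\ell\times1}(\mathcal E_v)$ and into Theorem~\ref{Thm:vABP} itself, so no new $v$-adic estimates are needed here.
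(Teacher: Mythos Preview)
Your two-step outline (first separate weights, then descend from $\overline{k}$ to $k(\alpha)$ within a single weight) matches the paper's structure, which packages these as Lemma~\ref{Lem:PreGMZ2} and Lemma~\ref{Lem:PreGMZ1} respectively. But the mechanisms you sketch for each step are not the ones that actually work, and the gaps are not merely administrative.

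For the weight separation, after a direct sum $\psi=\oplus\psi_{ij}$ and specialization at $t=\alpha^{-q^N}$, condition~(4) gives you $\sum_j P_{j,d_j}(\alpha^{-q^N})(c_jZ_j\tilde\pi_\alpha^{w_j})^{q^N}=0$, which mixes distinct powers $\tilde\pi_\alpha^{w_jq^N}$; ``dividing by the appropriate power of $\tilde\pi_\alpha^{q^N}$'' does not isolate a single weight. The paper instead replaces $\psi_{ij}$ by $\Omega_\alpha^{w_1-w_i}\psi_{ij}$ (and $\Phi_{ij}$ by $(1-\alpha t)^{w_1-w_i}\Phi_{ij}$), where $w_1$ is the maximal weight present: since $\Omega_\alpha$ vanishes at every $\alpha^{-q^N}$, all lower-weight blocks die at those points, so only the top-weight relation survives. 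This trick is essential and absent from your plan.

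For the descent to $k(\alpha)$, your claim that ``vanishing at all $\alpha^{-q^N}$ \ldots\ pins the surviving linear combination down to $k(\alpha)$-coefficients'' is not right: a polynomial vanishing at infinitely many points is identically zero, which is not what you want. The actual argument (Lemma~\ref{Lem:PreGMZ1}) normalizes $P$ so that one last-entry component equals $1$, then subtracts $(\mathbf{P}\psi)^{(-1)}=0$ from $\mathbf{P}\psi=0$ to obtain $(\mathbf{P}-\tfrac{1}{f(t)}\mathbf{P}^{(-1)}\Phi)\psi=0$. Here condition~(2) of Definition~\ref{Def:MZ} (the last column of each $\Phi_j$ is $(0,\dots,0,f(t))^{\mathrm{tr}}$) is what makes the last entry in each block collapse to $\mathbf{p}_{jd_j}-\mathbf{p}_{jd_j}^{(-1)}$; specialization at large $\alpha^{-q^N}$ then forces these to vanish, so $\mathbf{p}_{jd_j}\in\mathbb{F}_q(t)$, and only then does evaluation at $\alpha^{-1}$ land in $k(\alpha)$. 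Note in particular that condition~(2) is \emph{not} used to verify the determinant hypothesis of Theorem~\ref{Thm:vABP} (that is condition~(1)); it is used precisely for this Frobenius-fixed-point step, which your plan does not contain.
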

    
    An immediate consequence is the following corollary.
    
    \begin{corollary}
        Let $(G,\{G_w\},\{f_w\},\{V_w\},\alpha)$ be a $v$-adic graded MPL system such that for all $w_1\geq 1$ and $w_2\geq 1$ we have $G_{w_1}G_{w_2}\subset G_{w_1+w_2}$. Then, for all $w\geq 1$, every nonzero element in $\overline{G}_w$ is transcendental over $k$.
    \end{corollary}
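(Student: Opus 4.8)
The plan is to read the statement off directly from the graded structure established in Theorem~\ref{Thm:GMZ}. First I would invoke Theorem~\ref{Thm:GMZ}(1), which applies to any $v$-adic graded MPL system and hence a fortiori to the one at hand: it produces the internal direct sum decomposition $\overline{G}=\overline{k}\oplus(\bigoplus_{w\geq 1}\overline{G}_w)$, so that $\overline{k}$ is precisely the weight-zero graded piece of $\overline{G}$ and the sum over positive weights is direct. (Under the extra hypothesis $G_{w_1}G_{w_2}\subset G_{w_1+w_2}$ in the corollary, Theorem~\ref{Thm:GMZ}(2) moreover makes $\overline{G}$ a graded $\overline{k}$-algebra, but only the directness of the underlying vector-space grading is needed here.)

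Next, fix $w\geq 1$ and a nonzero element $Z\in\overline{G}_w$, and suppose toward a contradiction that $Z$ is algebraic over $k$. Every element of $\overline{G}_w$ lies in $\mathbb{C}_v$, and via the fixed embedding $\iota_v\colon\overline{k}\hookrightarrow\mathbb{C}_v$ the field $\overline{k}$ is identified with the algebraic closure of $k$ inside $\mathbb{C}_v$; therefore the assumption that $Z$ is algebraic over $k$ forces $Z\in\overline{k}$. But $\overline{k}$ is exactly the degree-zero component of the grading on $\overline{G}$, so $Z\in\overline{G}_w\cap\overline{k}=\{0\}$ by the directness of the decomposition, contradicting $Z\neq 0$. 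Hence every nonzero element of $\overline{G}_w$ is transcendental over $k$, which is the assertion.

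I do not anticipate a genuine obstacle in this deduction. The single point that must be used is the directness of the sum $\overline{k}+\sum_{w\geq 1}\overline{G}_w$: without it, a nonzero element of positive weight that happens to be algebraic over $k$ could a priori coincide with a nonzero element of $\overline{k}$, and the conclusion would fail. That directness, however, is exactly the content of Theorem~\ref{Thm:GMZ}, so the entire substance of the argument is upstream—in the construction of the auxiliary matrices and the application of the $v$-adic ABP criterion (Theorem~\ref{Thm:vABP}) together with the MPL property from Definition~\ref{Def:MZ}—and the corollary itself is a formal consequence.
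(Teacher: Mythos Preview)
Your proposal is correct and matches the paper's approach: the paper states the corollary as ``an immediate consequence'' of Theorem~\ref{Thm:GMZ} without further argument, and your write-up simply spells out that immediate deduction. Your observation that only the direct-sum decomposition of Theorem~\ref{Thm:GMZ}(1) is actually needed (so the multiplicative hypothesis $G_{w_1}G_{w_2}\subset G_{w_1+w_2}$ plays no role in the transcendence conclusion) is a fair sharpening.
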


    To prove Theorem \ref{Thm:GMZ}, we require the following two lemmas, which can be viewed as an extended version of \cite[Sec.~4.1,~4.2]{Cha14} in the $\infty$-adic MZV case. Some technical arguments in the proof are rooted in \cite{Cha14}.
        
    \begin{lemma}\label{Lem:PreGMZ1}
        Let $V$ be a finite set of elements in $\overline{k}_v^\times$ having the $f(t)$-type MPL property of weight $w$ with respect to $\alpha$. Suppose that $V$ is a linearly dependent set over $\overline{k}$. Then $V$ is in fact a linearly dependent set over $k(\alpha)$.
    \end{lemma}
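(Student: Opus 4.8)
The plan is to adapt the argument of \cite[\S 4.1]{Cha14} to the $v$-adic setting, using Theorem~\ref{Thm:vABP} in place of the $\infty$-adic ABP criterion.

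\emph{Reduction.} Since $\overline{k}$-linear (in)dependence concerns finite subsets, we may take $V=\{Z_1,\dots,Z_n\}$ finite. Let $B\subseteq V$ be a maximal $k(\alpha)$-linearly independent subset. If $B\subsetneq V$, then $V$ is already $k(\alpha)$-linearly dependent; so we may assume $B=V$, i.e. $Z_1,\dots,Z_n$ are $k(\alpha)$-linearly independent, and we must derive a contradiction from the assumed $\overline{k}$-linear dependence. Among all nontrivial relations $\sum_i a_iZ_i=0$ with $a_i\in\overline{k}$, fix one of minimal support; after reindexing, $\sum_{i=1}^{r}a_iZ_i=0$ with $a_1,\dots,a_r\in\overline{k}^\times$. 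As $Z_i\neq 0$ we have $r\geq 2$, and by minimality every $\overline{k}$-linear relation among $Z_1,\dots,Z_r$ is an $\overline{k}$-multiple of $(a_1,\dots,a_r)$.

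\emph{Combining the ABP data.} For $1\leq i\leq r$, let $(c_i,\Phi_i,\psi_i,\ell_i)$ witness the $f(t)$-type MPL property of $Z_i$ of weight $w$ with respect to $\alpha$, as in Definition~\ref{Def:MZ}; in particular $\psi_i^{(-1)}=\Phi_i\psi_i$, the last column of $\Phi_i$ equals $(0,\dots,0,f)^{\tr}$ with the \emph{same} $f$ for all $i$, and $\psi_i$ satisfies conditions (3) and (4). Put $\Phi:=\bigoplus_{i=1}^r\Phi_i$ and $\psi:=\bigoplus_{i=1}^r\psi_i$, with $\ell:=\sum_i\ell_i$; then $\psi^{(-1)}=\Phi\psi$, and $\Phi$ satisfies the hypotheses of Theorem~\ref{Thm:vABP} at $\gamma:=\alpha^{-1}$, since $\gamma\in\overline{k}^\times\setminus\overline{\FF}_q^\times$ (because $|\alpha|_v<1$) and $\det\Phi(\gamma^{(-i)})=\prod_{j=1}^r\det\Phi_j(\gamma^{(-i)})\neq 0$ for all $i>0$. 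Let $\rho\in\Mat_{1\times\ell}(\overline{k})$ have $a_ic_i^{-1}$ in the last slot of its $i$-th block and zeros elsewhere; by condition (3), $\rho\psi(\alpha^{-1})=\big(\sum_{i=1}^r a_iZ_i\big)\tilde{\pi}_\alpha^w=0$. Theorem~\ref{Thm:vABP} then provides $P=(P_1\mid\cdots\mid P_r)\in\Mat_{1\times\ell}(\overline{k}[t])$ with $P_i\in\Mat_{1\times\ell_i}(\overline{k}[t])$, such that $P(\alpha^{-1})=\rho$ and $P\psi=0$.

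\emph{Descent --- the main obstacle.} Let $g_i\in\overline{k}[t]$ be the last coordinate of $P_i$, so $g_i(\alpha^{-1})=a_ic_i^{-1}$. Evaluating $P\psi=\sum_{i=1}^r P_i\psi_i=0$ at $t=\alpha^{-q^N}$ for $N\geq 1$ and using condition (4) --- which kills all but the last coordinate of $\psi_i(\alpha^{-q^N})=(0,\dots,0,(c_iZ_i\tilde{\pi}_\alpha^w)^{q^N})^{\tr}$ --- we get $\sum_{i=1}^r g_i(\alpha^{-q^N})(c_iZ_i)^{q^N}=0$; taking $q^N$-th roots (legitimate since $\mathbb{C}_v$ is perfect) and using $g_i(\alpha^{-q^N})^{1/q^N}=g_i^{(-N)}(\alpha^{-1})$ yields the relations $\sum_{i=1}^r g_i^{(-N)}(\alpha^{-1})\,c_iZ_i=0$ for all $N\geq 0$. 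By minimality of the support, each of these is an $\overline{k}$-multiple of $(a_i)$, i.e. $g_i^{(-N)}(\alpha^{-1})\,c_i=\mu_N a_i$ for some $\mu_N\in\overline{k}$ ($\mu_0=1$). The remaining task --- the delicate one --- is to combine this rigidity with condition (2) (which governs how $P\psi=0$ behaves under the Frobenius twist $P\mapsto P^{(-1)}\Phi$, producing last-coordinate polynomials $g_i^{(-1)}f$ that are again uniform in $i$) and with the boundedness of $\deg_t P$, so as to force a nonzero $\overline{k}$-multiple of $(a_1,\dots,a_r)$ to have all entries in $k(\alpha)$; this contradicts the $k(\alpha)$-linear independence of $Z_1,\dots,Z_r$ and finishes the proof. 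I expect this last step to be the crux: contrary to the $\infty$-adic case of \cite{Cha14}, the functions here live only in $\mathcal{E}_v$ and $\mathbb{T}_v$, so the whole descent must be executed $v$-adically, with no recourse to $\infty$-adic entireness.
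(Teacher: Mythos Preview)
Your setup --- the reduction to a minimal $\overline{k}$-relation, the block-diagonal system $(\Phi,\psi)$, and the application of Theorem~\ref{Thm:vABP} to produce $P$ with $P\psi=0$ and $P(\alpha^{-1})=\rho$ --- matches the paper. The gap is the final ``descent'' paragraph: you announce that the crux is to combine the proportionality $g_i^{(-N)}(\alpha^{-1})c_i=\mu_Na_i$ with condition~(2) and with ``boundedness of $\deg_tP$'', but you do not carry this out, and the degree bound is not the right lever. As written, this is not a proof.

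The paper's missing move is a \emph{functional} subtraction rather than further specialization. Divide $P$ by its first-block last coordinate $P_{1d_1}$ (nonzero at $\alpha^{-1}$), obtaining $\mathbf{P}\in\Mat_{1\times\ell}(\overline{k}(t))$ with $\mathbf{p}_{1d_1}=1$ and still $\mathbf{P}\psi=0$. From $\mathbf{P}\psi=0$ and $(\mathbf{P}\psi)^{(-1)}=\mathbf{P}^{(-1)}\Phi\psi=0$ one forms $\mathbf{Q}:=\mathbf{P}-\tfrac{1}{f}\mathbf{P}^{(-1)}\Phi$, which again annihilates $\psi$. This is precisely where condition~(2) is used: because the last column of each $\Phi_j$ is $(0,\dots,0,f)^{\tr}$, the last entry of the $j$-th block of $\mathbf{Q}$ equals $\mathbf{p}_{jd_j}-\mathbf{p}_{jd_j}^{(-1)}$, and in particular the first one vanishes since $\mathbf{p}_{1d_1}=1$. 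Now specialize $\mathbf{Q}\psi=0$ at $t=\alpha^{-q^N}$ for $N$ large enough that all entries of $\mathbf{Q}$ are regular there; by condition~(4) only the last coordinates of each $\psi_j$ survive, and because the $j=1$ contribution is already zero one obtains a relation among $Z_2^{q^N},\dots,Z_r^{q^N}$ alone. Your minimality hypothesis makes $\{Z_2,\dots,Z_r\}$ $\overline{k}$-independent, so these relations are trivial for all large $N$; hence each rational function $\mathbf{p}_{jd_j}-\mathbf{p}_{jd_j}^{(-1)}$ has infinitely many zeros and is identically $0$, i.e.\ $\mathbf{p}_{jd_j}\in\FF_q(t)$. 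Specializing $\mathbf{P}\psi=0$ at $t=\alpha^{-1}$ (where the non-last coordinates $\mathbf{p}_{ji}$ vanish by construction) then gives the nontrivial relation $\sum_j\mathbf{p}_{jd_j}(\alpha^{-1})\,c_jZ_j=0$ with $\mathbf{p}_{jd_j}(\alpha^{-1})\in\FF_q(\alpha)$, which is the desired $k(\alpha)$-dependence. Note finally that your closing worry about $v$-adic analytic subtleties is misplaced: once Theorem~\ref{Thm:vABP} delivers a polynomial $P$ with $P\psi=0$, everything that follows is algebra in $\overline{k}(t)$ and specialization at the algebraic points $\alpha^{-q^N}$, exactly as in \cite{Cha14}.
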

        
    \begin{proof}
        Let $V=\{Z_1,\dots,Z_m\}$. Without loss of generality, we may assume $m\geq 2,~Z_1\in\overline{k}\mbox{-span}\{Z_2,\dots,Z_m\}$, and $V\setminus\{Z_1\}$ is a $\overline{k}$-linear independent set. According to the $f(t)$-type MPL property of weight $w$ with respect to $\alpha$, there exists
        $$\Phi_j(t)\in\Mat_{d_j\times d_j}(\overline{k}[t])\mbox{ and }\psi_j(t)\in\Mat_{d_j\times 1}(\mathcal{E}_v)$$
        associated from $Z_j$ satisfying Definition~\ref{Def:MZ}.
        Let $d:=\sum_{j=1}^md_j$. Consider the matrix $\Phi:=\oplus_{j=1}^{m}\Phi_{j}\in\Mat_{d\times d}(\overline{k}[t])$ and the vector $\psi:=\oplus_{j=1}^{m}\psi_{ij}\in\Mat_{d\times 1}(\mathcal{E}_v)$. Then, we have $\psi^{(-1)}=\Phi\psi$. Moreover, 
        $\psi(\alpha^{-1})$ is of the form
        $$\psi(\alpha^{-1})=\oplus_{j=1}^m(\Tilde{\pi}_\alpha^{w},\star,\dots,\star,c_{j}Z_{j}\Tilde{\pi}_\alpha^{w})^{\tr}$$
        and $\psi(\alpha^{-q^N})$ is of the form
        $$\psi(\alpha^{-q^N})=\oplus_{j=1}^m(0,\dots,0,(c_{j}Z_{j}\Tilde{\pi}_\alpha^{w})^{q^N})^{\tr}$$
        for all positive integers $N$. By using Theorem~\ref{Thm:vABP}, there exist vectors        $$P_j=(P_{j1},\dots,P_{jd_j})\in\Mat_{1\times d_j}(\overline{k}[t])$$
        such that if we put $P=(P_1,\dots,P_m)$, then we have 
        $$P\psi=0,~P_{1d_1}(\alpha^{-1})=1\mbox{ and }P_{ji}(\alpha^{-1})=0\mbox{ for all }1\leq i<m_j.$$ 
        
        Let $\mathbf{P}:=\frac{1}{P_{1d_1}}P$. Then, $\mathbf{P}$ is of the form
        $$\mathbf{P}=(\mathbf{p}_{11},\dots,\mathbf{p}_{1d_1},\dots,\mathbf{p}_{m1},\dots,\mathbf{p}_{md_m})\in\Mat_{1\times d}(\overline{k}(t))$$ 
        where $\mathbf{p}_{1d_1}=1$, and we have        $$\mathbf{P}\psi=0\mbox{ and }\mathbf{p}_{ji}(\alpha^{-1})=0\mbox{ for all }1\leq i<m_j.$$ 
        Subtracting the two equations $\mathbf{P}\psi=0$ and $(\mathbf{P}\psi)^{(-1)}=0$, we have
        $$(\mathbf{P}-\frac{1}{f(t)}\mathbf{P}^{(-1)}\Phi)\psi=0.$$ 
        Since the last column of each matrix $\Phi_j$ is $(0,\dots,0,f(t))^\tr$, we have 
        $$\mathbf{P}-\frac{1}{f(t)}\mathbf{P}^{(-1)}\Phi=(\Tilde{\mathbf{p}}_{11},\dots,\Tilde{\mathbf{p}}_{1d_1},\dots,\Tilde{\mathbf{p}}_{m1},\dots,\Tilde{\mathbf{p}}_{md_m})$$
        where 
        $$\Tilde{\mathbf{p}}_{1d_1}=0\mbox{ and }\Tilde{\mathbf{p}}_{jd_j}=\mathbf{p}_{jd_j}-\mathbf{p}_{jd_j}^{(-1)}\mbox{ for }j=2,\dots,m.$$ 
        
        Now, we claim that $\Tilde{\mathbf{p}}_{jd_j}$ is identically zero for $j=2,\dots,m$. To prove this claim, suppose on the contrary that there exists some $2\leq j\leq m$ such that $\Tilde{\mathbf{p}}_{jd_j}$ is nonzero. Then, we pick $N$ large enough so that all entries of $\mathbf{P}-\frac{1}{f(t)}\mathbf{P}^{(-1)}\Phi$ are regular at $t=\alpha^{-q^N}$, and this can be done because the cardinality of the set $\{\alpha^{-q^N}\}$ is countably infinite. Since $\Tilde{\mathbf{p}}_{jd_j}$ is non-vanishing at $t=\alpha^{-q^N}$, specializing $(\mathbf{P}-\frac{1}{f(t)}\mathbf{P}^{(-1)}\Phi)\psi=0$ at $t=\alpha^{-q^N}$ gives a nontrivial $\overline{k}$-linear relation among $Z_2^{q^N},\dots,Z_m^{q^N}$. By taking the $q^N$-th root, we obtain a nontrivial $\overline{k}$-relation among $Z_2,\dots,Z_m$ and hence a contradiction. Consequently, we have $\mathbf{p}_{jd_j}\in\mathbb{F}_q(t)$ for $j=2,\dots,m$. Note that each entry of $\mathbf{P}$ is regular at $t=\alpha^{-1}$. By specializing $\mathbf{P}\psi=0$ at $t=\alpha^{-1}$, we obtain a nontrivial $F_\alpha$-linear relation among $Z_1,\dots,Z_m$, and the desired result follows.
    \end{proof}
    
    \begin{lemma}\label{Lem:PreGMZ2}
        Let $\{f_1,\dots,f_d\}\subset\overline{k}[t]$ be a finite set of nonzero polynomials, $\{w_1,\dots,w_d\}\subset\mathbb{Z}_{>0}$ be a finite set of distinct positive integers and $\alpha\in\overline{k}^\times$ such that $|\alpha|_v<1$. Let $V_{i}$ be a finite set of values of $\overline{k}^\times_v$ having the $f_i(t)$-type MPL property of weight $w_i$ with respect to $\alpha$ for each $1\leq i\leq d$. If $$\{1\}\cup\bigcup_{i=1}^dV_i$$ is a linearly dependent set over $\overline{k}$, then there exists $1\leq i\leq d$ such that $V_i$ is also a linearly dependent set over $\overline{k}$.
    \end{lemma}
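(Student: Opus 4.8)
The plan is to argue by induction on $d$, the case $d=0$ being vacuous since the singleton $\{1\}$ is never $\overline{k}$-linearly dependent. For the inductive step set $W:=\max\{w_1,\dots,w_d\}$ and let $i^\ast$ be the unique index with $w_{i^\ast}=W$; uniqueness is the first use of the distinctness of the weights. Given a nontrivial relation $\beta_0\cdot 1+\sum_{i=1}^d\sum_{Z\in V_i}\beta_{iZ}Z=0$ with $\beta_0,\beta_{iZ}\in\overline{k}$, I would first homogenize every value to weight $W$ by means of the $v$-adic Anderson--Thakur series, following \cite[Sec.~4.2]{Cha14}. Concretely, for $Z\in V_i$ with data $\Phi_{iZ}\in\Mat_{d_{iZ}\times d_{iZ}}(\overline{k}[t])$, $\psi_{iZ}\in\Mat_{d_{iZ}\times 1}(\mathcal{E}_v)$, $c_{iZ}\in\overline{k}^\times$ as in Definition~\ref{Def:MZ}, I would set $\widehat{\psi}_{iZ}:=\Omega_\alpha^{W-w_i}\psi_{iZ}$, $\widehat{\Phi}_{iZ}:=(1-\alpha t)^{W-w_i}\Phi_{iZ}$, and adjoin one extra $1\times 1$ block $\psi_{\circ}:=\Omega_\alpha^{W}$, $\Phi_{\circ}:=(1-\alpha t)^{W}$ to account for the value $1$. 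By Proposition~\ref{Prop:Basic_Ex} ($\Omega_\alpha\in\mathcal{E}_v$ and $\Omega_\alpha^{(-1)}=(1-\alpha t)\Omega_\alpha$) together with the fact that $\mathcal{E}_v$ is a ring, each $\widehat{\psi}_{iZ}$ has entries in $\mathcal{E}_v$ and satisfies $\widehat{\psi}_{iZ}^{(-1)}=\widehat{\Phi}_{iZ}\widehat{\psi}_{iZ}$; and since $|\alpha|_v<1$ forces $\alpha\notin\overline{\FF}_q$, one checks $\det\widehat{\Phi}_{iZ}(\gamma^{(-j)})=(1-\alpha^{1-q^{-j}})^{(W-w_i)d_{iZ}}\det\Phi_{iZ}(\gamma^{(-j)})\neq 0$ for $\gamma=\alpha^{-1}$ and all $j>0$. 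Letting $\Phi$ and $\psi$ be the block-diagonal matrix and the column vector assembled from all the blocks above (including $\Phi_{\circ},\psi_{\circ}$), the pair $(\Phi,\psi)$ then satisfies the hypotheses of Theorem~\ref{Thm:vABP} with $\gamma=\alpha^{-1}$.

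Next I would invoke Theorem~\ref{Thm:vABP}. The last coordinate of $\widehat{\psi}_{iZ}(\alpha^{-1})$ equals $c_{iZ}Z\Tilde{\pi}_\alpha^{W}$ and the single coordinate of $\psi_{\circ}(\alpha^{-1})$ equals $\Tilde{\pi}_\alpha^{W}$, so multiplying the given relation by $\Tilde{\pi}_\alpha^{W}$ produces a nonzero $\rho\in\Mat_{1\times D}(\overline{k})$ ($D$ the total size, $\rho$ supported in the ``last-coordinate'' slots with entries $\beta_0$ and $\beta_{iZ}/c_{iZ}$) with $\rho\psi(\alpha^{-1})=0$; Theorem~\ref{Thm:vABP} then gives $P\in\Mat_{1\times D}(\overline{k}[t])$ with $P(\alpha^{-1})=\rho$ and $P\psi=0$. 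The decisive step is to specialize $P\psi=0$ at $t=\alpha^{-q^N}$, $N\in\ZZ_{>0}$: because $\Omega_\alpha$ has a simple zero at each such point, $\widehat{\psi}_{iZ}(\alpha^{-q^N})=0$ whenever $w_i<W$ and also for the extra block, whereas for $i=i^\ast$ the twisting factor $\Omega_\alpha^{W-w_{i^\ast}}$ is trivial and Definition~\ref{Def:MZ}(4) gives $\widehat{\psi}_{i^\ast Z}(\alpha^{-q^N})=(0,\dots,0,(c_{i^\ast Z}Z\Tilde{\pi}_\alpha^{W})^{q^N})^{\tr}$. Hence, writing $P_{i^\ast Z,\,\mathrm{last}}$ for the entry of $P$ in the last slot of the block attached to $Z\in V_{i^\ast}$, the specialization reduces to
\[
\sum_{Z\in V_{i^\ast}}P_{i^\ast Z,\,\mathrm{last}}(\alpha^{-q^N})\,(c_{i^\ast Z}Z\Tilde{\pi}_\alpha^{W})^{q^N}=0\qquad\text{for all }N>0,
\]
and here the distinctness of the $w_i$ is essential: it ensures that exactly the top-weight block survives.

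Finally I would split into two cases. If the coefficients $P_{i^\ast Z,\,\mathrm{last}}(\alpha^{-q^N})$ fail to all vanish for some $N$, then dividing by $\Tilde{\pi}_\alpha^{Wq^N}\neq 0$ and extracting $q^N$-th roots (legitimate since $\overline{k}$ is perfect) gives a nontrivial $\overline{k}$-linear relation among the elements of $V_{i^\ast}$, which is the desired conclusion. Otherwise each polynomial $P_{i^\ast Z,\,\mathrm{last}}$ vanishes at the infinitely many distinct points $\alpha^{-q^N}$ and so vanishes identically; evaluating at $t=\alpha^{-1}$ yields $\beta_{i^\ast Z}/c_{i^\ast Z}=P_{i^\ast Z,\,\mathrm{last}}(\alpha^{-1})=0$, i.e.\ $\beta_{i^\ast Z}=0$ for every $Z\in V_{i^\ast}$. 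The surviving relation $\beta_0\cdot 1+\sum_{i\neq i^\ast}\sum_{Z\in V_i}\beta_{iZ}Z=0$ is still nontrivial (otherwise the original one was), so $\{1\}\cup\bigcup_{i\neq i^\ast}V_i$ is $\overline{k}$-linearly dependent, and the induction hypothesis applied to the $d-1$ distinct weights $\{w_i\}_{i\neq i^\ast}$ yields some $i\neq i^\ast$ with $V_i$ $\overline{k}$-linearly dependent.

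I expect the main difficulty to be organizational rather than conceptual: one must verify that the homogenization by $\Omega_\alpha$ keeps all entries inside $\mathcal{E}_v$ and preserves the non-vanishing condition $\det\Phi(\gamma^{(-j)})\neq 0$ required by Theorem~\ref{Thm:vABP}, and that the specialization at $t=\alpha^{-q^N}$ genuinely isolates the single maximal-weight block (this is where the distinctness of the $w_i$ and the zero structure of $\Omega_\alpha$ from Proposition~\ref{Prop:Basic_Ex} are used). In contrast with Lemma~\ref{Lem:PreGMZ1}, no descent of the auxiliary polynomials to $\FF_q(t)$ is needed here, since the conclusion only asks for $\overline{k}$-linear dependence and the separation by weight is already achieved through the zeros of $\Omega_\alpha$.
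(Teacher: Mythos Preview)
Your proposal is correct and follows essentially the same approach as the paper: homogenize all blocks to the maximal weight by multiplying with powers of $\Omega_\alpha$, apply Theorem~\ref{Thm:vABP}, and specialize $P\psi=0$ at $t=\alpha^{-q^N}$ so that the zeros of $\Omega_\alpha$ kill every block except the top-weight one. The only differences are packaging---you run an explicit induction on $d$ and attach a separate $1\times 1$ block $(\Phi_\circ,\psi_\circ)$ for the constant $1$, whereas the paper absorbs the constant into the first coordinate of an existing block and replaces the induction by a ``without loss of generality'' that relabels so the top-weight set $V_1$ actually carries a nonzero coefficient---and these changes do not affect the substance of the argument.
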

        
    \begin{proof}
        After reordering, we may assume that $w_1>\cdots>w_d$. Since        $$\{1\}\cup\bigcup_{i=1}^dV_i$$
        is a linearly dependent set over $\overline{k}$, without loss of generality, we may assume that there is a nontrivial $\overline{k}$-linear relation between $V_1$ and $\{1\}\bigcup_{i=2}^dV_i$. Let        $$V_i=\{Z_{i1},\dots,Z_{im_i}\}\mbox{ for }1\leq i\leq d.$$
        Then, according to the $f_i(t)$-type MPL property of weight $w_i$ with respect to $\alpha$, there exists        $$\Phi_{ij}(t)\in\Mat_{d_{ij}\times d_{ij}}(\overline{k}[t])\mbox{ and }\psi_{ij}(t)\in\Mat_{d_{ij}\times 1}(\mathcal{E}_v)$$
        associated from $Z_{ij}$ satisfying Definition~\ref{Def:MZ}. Now, we consider the matrix        $$\Phi:=\oplus_{i=1}^d(\oplus_{j=1}^{m_j}(1-\alpha t)^{w_1-w_i}\Phi_{ij})$$ 
        and the vector        $$\psi:=\oplus_{i=1}^d(\oplus_{j=1}^{m_j}\Omega_\alpha^{w_1-w_i}\psi_{ij}).$$ Then, we have $\psi^{(-1)}=\Phi\psi$. Moreover, $\psi(\alpha^{-1})$ is of the form
        $$\psi(\alpha^{-1})=\oplus_{j=1}^{m_1}(\Tilde{\pi}_\alpha^{w},\star,\dots,\star,c_{ij}Z_{ij}\Tilde{\pi}_\alpha^{w})^{\tr}\oplus_{i=2}^{d}\left(\oplus_{j=1}^{m_i}(\Tilde{\pi}_\alpha^{w},\star,\dots,\star,c_{ij}Z_{ij}\Tilde{\pi}_\alpha^{w})^{\tr}\right)$$
        and $\psi(\alpha^{-q^N})$ is of the form
        $$\psi(\alpha^{-q^N})=\oplus_{j=1}^{m_1}(0,\dots,0,(c_{ij}Z_{ij}\Tilde{\pi}_\alpha^{w})^{q^N})^{\tr}\oplus_{i=2}^{d}\left(\oplus_{j=1}^{m_i}(0,\dots,0)^{\tr}\right)$$
        for all positive integers $N$. Here, the last equality comes from the fact that $\Omega_\alpha$ has simple zero at $\alpha^{-q^N}$ for all positive integers $N$ and $w_1>w_i$ for each $2\leq i\leq d$. Let 
        $$\rho:=(\rho_{11},\dots,\rho_{1m_1},\dots,\rho_{d1},\dots,\rho_{dm_d})$$
        be a nonzero vector with entries in $\overline{k}$ such that $$\rho\psi(\alpha^{-1})=0\mbox{ where }\rho_{ij}\in\Mat_{1\times d_{ij}}(\overline{k}).$$
        Since we assume that there is a nontrivial $\overline{k}$-linear relation between $$V_1\mbox{ and }\{1\}\cup\bigcup_{i=2}^dV_i,$$ the last entry of $\rho_{1\ell}$ can be chosen to be nonzero for some $1\leq \ell\leq m_1$. Now, we apply Theorem~\ref{Thm:vABP} for each $1\leq i\leq d$. Then, there exists a nonzero vector        $$P:=(P_{11},\dots,P_{1m_1},\dots,P_{d1},\dots,P_{dm_d}),$$
        such that        $$P(\alpha^{-1})=\rho\mbox{ and }P\psi=0$$ 
        where $P_{ij}\in\Mat_{1\times d_{ij}}(\overline{k}[t])$. Note that $P_{1\ell}$ is not identically zero since $\rho_{1\ell}\neq 0$. Now, we choose a positive integer $N$ large enough so that $P_{1\ell}(\alpha^{-q^N})\neq 0$. Then, we obtain a nontrivial $\overline{k}$-linear relation among $Z_{11}^{q^N},\dots,Z_{1m_1}^{q^N}$ when we specialize at $t=\alpha^{-q^N}$. After taking the $q^N$-th root, we obtain a nontrivial $\overline{k}$-linear relation among $Z_{11},\dots,Z_{1m_1}$. Hence, $V_1$ is a linearly dependent set over $\overline{k}$, and the desired result now follows.
    \end{proof}
    
{\noindent\textit{Proof of Theorem~\ref{Thm:GMZ}}.}
    For the first part, suppose on the contrary that
    $$a_0+a_1g_{w_1}+\cdots+a_dg_{w_d}=0$$
    for some $a_i\in\overline{k}$ not all zero and  $g_{w_i}\in\overline{G}_{w_i}$ not all zero. Let $U_{w_i}\subset V_{w_i}$ be a finite set of $F_\alpha$-linear independent elements in $V_{w_i}$ such that $g_{w_i}$ is contained in the $\overline{k}$-vector space spanned by elements in $U_{w_i}$. Lemma \ref{Lem:PreGMZ2} implies that there exists $1\leq i\leq d$ such that $U_{w_i}$ is a linearly dependent set over $\overline{k}$. Lemma \ref{Lem:PreGMZ1} shows that $U_{w_i}$ is a linearly dependent set over $F_\alpha$, which leads to a contradiction. The second part is an immediate consequence of the first part together with the assumption $G_{w_1}G_{w_2}\subset G_{w_1+w_2}$ for all $w_1\geq 1$ and $w_2\geq 1$. Hence, we complete the proof of Theorem \ref{Thm:GMZ}. \qed
        
\subsection{Carlitz multiple polylogarithms}

    Recall the definition of CMPLs given in (\ref{Def:CMPL}) and the $v$-adic convergence domain $\DDConv_{\fs,v}$ given in (\ref{Def:Conv._Domain_v}). In order to prove Theorem~\ref{Thm:CMPLs}, we construct a deformation series for $v$-adic CMPLs. From now on, we assume that $\epsilon_v=1$ and thus $q_v=q$. Moreover, $\varpi_v=\theta+\lambda_v$ with $\lambda_v\in\mathbb{F}_q$. The main purpose in this section is to prove Theorem~\ref{Thm:CMPLs}.
        
    \begin{definition}
        Let $\fs=(s_1,\dots,s_r)\in\mathbb{Z}_{>0}^{r}$ be an index and $\bu=(u_1,\dots,u_r)\in (\overline{k}^\times)^r\cap\DDConv_{\fs,v}$. Then we define
        \begin{align*}
            \mathfrak{L}_{\fs,\bu}(t):=&\sum_{i_1>\cdots>i_r\geq 0}(\Omega_{\varpi_v}^{s_1}u_1)^{(i_1)}\dots(\Omega_{\varpi_v}^{s_r}u_r)^{(i_r)}t^{i_1 s_1+\cdots+i_r s_r}\\
            =\Omega^{\wt(\fs)}_{\varpi_v}&\sum_{i_1>\cdots>i_r\geq 0}\frac{u_1^{q^{i_1}}\cdots u_r^{q^{i_r}}t^{i_1 s_1+\cdots+i_r s_r}}{((1-\varpi_v^{q}t)\cdots(1-\varpi_v^{q^{i_1}}t))^{s_1}\cdots((1-\varpi_v^{q}t)\cdots(1-\varpi_v^{q^{i_r}}t))^{s_r}}.
        \end{align*}
    \end{definition}
    Note that for $1\leq\ell\leq r$, since $\|\Omega_{\varpi_v}\|_v=1$ and $\bu\in(\overline{k}^\times)^r\cap\DDConv_{\fs,v}$, we have
    \[
        \|(\Omega_{\varpi_v}^{s_1}u_1)^{(i_1)}\dots(\Omega_{\varpi_v}^{s_r}u_r)^{(i_r)}t^{i_1 s_1+\cdots+i_r s_r}\|_v=|u_1^{q^{i_1}}\cdots u_r^{q^{i_r}}|_v\to 0~\mathrm{as}~0\leq i_r<\cdots<i_1\to\infty.
    \]
    Thus, we have $\mathfrak{L}_{\fs,\bu}\in\mathbb{T}_v$. The following Proposition shows that $\mathfrak{L}_{\fs,\bu}$ in fact lies in the ring of $v$-adic entire series with algebraic coefficients.
        
    \begin{proposition}\label{pMZ_1}
        Let $\fs=(s_1,\dots,s_r)\in\mathbb{Z}_{>0}^{r}$ be an index and $\bu=(u_1,\dots,u_r)\in (\overline{k}^\times)^r\cap\DDConv_{\fs,v}$. Then we have
        \[
            \mathfrak{L}_{\fs,\bu}(t)\in\mathcal{E}_v.
        \]
    \end{proposition}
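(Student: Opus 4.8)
The plan is to estimate the coefficients of $\mathfrak{L}_{\fs,\bu}(t)=\sum_{N\ge 0}c_Nt^N$ directly. The computation preceding the statement already gives $\mathfrak{L}_{\fs,\bu}\in\mathbb{T}_v$, so it remains to check that all $c_N$ lie in a single finite extension of $k_v$ and that $|c_N|_v^{1/N}\to 0$. For a tuple $\mathbf{i}=(i_1>\cdots>i_r\ge 0)$ write $M(\mathbf{i}):=i_1s_1+\cdots+i_rs_r$. By Proposition~\ref{Prop:Basic_Ex}, $\Omega_{\varpi_v}\in\mathcal{E}_v$; since $\mathcal{E}_v$ is a ring stable under non-negative Frobenius twists, the summand indexed by $\mathbf{i}$, namely $(\Omega_{\varpi_v}^{s_1}u_1)^{(i_1)}\cdots(\Omega_{\varpi_v}^{s_r}u_r)^{(i_r)}t^{M(\mathbf{i})}$, lies in $\mathcal{E}_v$ with coefficients in the finite extension $K:=k(u_1,\dots,u_r)$ of $k$. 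As $M(\mathbf{i})\ge i_1$, only finitely many tuples contribute to a given $c_N$, so $c_N\in K$ for every $N$, and hence $k_v(c_0,c_1,\dots)$ lies in the compositum $K_v$, which is finite over $k_v$.

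The heart of the matter is a growth estimate for each summand. Using $\Omega_{\varpi_v}^{(i)}=\prod_{j\ge i+1}(1-\varpi_v^{q^j}t)$, one rewrites the $\mathbf{i}$-summand as $u_1^{q^{i_1}}\cdots u_r^{q^{i_r}}t^{M(\mathbf{i})}\prod_{j\ge 1}(1-\varpi_v^{q^j}t)^{e_j}$, where $e_j:=\sum_{\ell:\,i_\ell<j}s_\ell$ vanishes for $j\le i_r$ and satisfies $1\le e_j\le\wt(\fs)$ for $j>i_r$. Expanding $\prod_{j\ge 1}(1-\varpi_v^{q^j}t)^{e_j}$ and using $|\varpi_v^{q^j}|_v=q^{-q^j}$ together with the ultrametric inequality, the coefficient of $t^m$ is, for $m\ge 1$, a sum of monomials $\pm\varpi_v^{q^{j_1}+\cdots+q^{j_m}}$ with each $j_a>i_r$ and each index repeated at most $\wt(\fs)$ times, so its $v$-norm is at most $q^{-q^{\lceil m/\wt(\fs)\rceil}}$ — \emph{doubly} exponentially small in $m$. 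Hence, using $|u_\ell|_v\le 1$ for $\ell\ge 2$, the contribution of $\mathbf{i}$ to $c_N$ has $v$-norm at most $|u_1|_v^{q^{i_1}}\cdot q^{-q^{\lceil(N-M(\mathbf{i}))/\wt(\fs)\rceil}}$ when $M(\mathbf{i})<N$, and at most $|u_1|_v^{q^{i_1}}$ when $M(\mathbf{i})=N$.

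To conclude, fix $N$ large and split over the contributing tuples according to whether $M(\mathbf{i})\le N/2$ or $M(\mathbf{i})>N/2$. In the first case $N-M(\mathbf{i})\ge N/2\ge 1$, so the contribution has norm at most $q^{-q^{\lceil N/(2\wt(\fs))\rceil}}$. In the second case $i_1\ge M(\mathbf{i})/\wt(\fs)>N/(2\wt(\fs))$, and writing $|u_1|_v=q^{-c}$ with $c>0$ — which is exactly where the hypothesis $|u_1|_v<1$ built into $\DDConv_{\fs,v}$ is used — the contribution has norm at most $q^{-c\,q^{N/(2\wt(\fs))}}$. By the ultrametric inequality $|c_N|_v\le q^{-c'q^{N/(2\wt(\fs))}}$ with $c':=\min(1,c)>0$, whence $\frac{1}{N}\log_q|c_N|_v\le -c'q^{N/(2\wt(\fs))}/N\to-\infty$. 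Together with the first paragraph this shows $\mathfrak{L}_{\fs,\bu}\in\mathcal{E}_v$.

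The hard part is the second paragraph. A single-exponential decay bound on the coefficients of $\Omega_{\varpi_v}$ would only give $|c_N|_v^{1/N}$ bounded away from $1$, not convergent to $0$; one genuinely needs the doubly-exponential decay $|\varpi_v^{q^j}|_v=q^{-q^j}$ to tame the long tails $t^{N-M(\mathbf{i})}$ of the summands with small $M(\mathbf{i})$, and the strict inequality $|u_1|_v<1$ to control the summands with $M(\mathbf{i})$ close to $N$. Keeping track of the exponents $e_j$ and the repetition constraints in the expansion is the only real calculation.
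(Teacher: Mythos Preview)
Your argument is essentially correct and, for the entire-ness part, parallels the paper's proof closely: both rewrite the $\mathbf{i}$-summand as $u_1^{q^{i_1}}\cdots u_r^{q^{i_r}}t^{M(\mathbf{i})}\prod_{j>i_r}(1-\varpi_v^{q^j}t)^{e_j}$ and then estimate. Your doubly-exponential bound $q^{-q^{\lceil m/\wt(\fs)\rceil}}$ on the $m$-th coefficient of the infinite product, together with the split at $M(\mathbf{i})\lessgtr N/2$, makes explicit what the paper leaves slightly implicit when it asserts that the general term tends to zero for every radius $R$.

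There is, however, a small but real slip in your first paragraph. You claim that each summand has coefficients in $K:=k(u_1,\dots,u_r)$, and hence that $c_N\in K$. This is false: already the coefficient of $t$ in $\Omega_{\varpi_v}$ is $-\sum_{j\ge 1}\varpi_v^{q^j}$, which lies in $k_v\cap\overline{k}$ but not in $k$. What is true (and suffices) is that the coefficients of $\Omega_{\varpi_v}$ lie in $k_v$, so each summand has coefficients in $k_v(u_1,\dots,u_r)\cap\overline{k}$; since only finitely many summands contribute to $c_N$, one still obtains $k_v(c_0,c_1,\dots)\subset k_v(u_1,\dots,u_r)$, which is finite over $k_v$. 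With this correction your conclusion stands.

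The paper handles the algebraicity and finiteness condition differently: it proceeds by induction on the depth $r$ using the functional equations $\mathfrak{L}_{\fs,\bu}^{(-1)}=u_r^{1/q}(1-\varpi_v t)^{s_r}\Omega_{\varpi_v}^{s_r}\mathfrak{L}_{\fs',\bu'}+t^{\wt(\fs)}\mathfrak{L}_{\fs,\bu}$, reducing to $\Omega_{\varpi_v}\in\mathcal{E}_v$ (Proposition~\ref{Prop:Basic_Ex}) at depth one and reading off a recursion for the coefficients. That route sidesteps the question of exactly which field the coefficients live in. Your direct approach is shorter once the slip is fixed, and gives the bonus information that all coefficients lie in a single explicit finite extension of $k_v$.
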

        
    \begin{proof}
        We first check the convergence. By definition we have
        \begin{align*}
            \mathfrak{L}_{\fs,\bu}(t)
            &=\sum_{i_1>\cdots>i_r\geq 0}\left(\frac{\Omega_{\varpi_v}t^{i_1}}{(1-\varpi_v^qt)\cdots(1-\varpi_v^{q^i_1}t)}\right)^{s_1}\cdots\left(\frac{\Omega_{\varpi_v}t^{i_r}}{(1-\varpi_v^qt)\cdots(1-\varpi_v^{q^i_r}t)}\right)^{s_r}u_1^{q^{i_1}}\cdots u_r^{q^{i_r}}\\
            &=\sum_{i_1>\cdots>i_r\geq 0}\left(t^{i_1}\prod_{j_1>i_1}(1-\varpi_v^{q^{j_1}}t)\right)^{s_1}\cdots\left(t^{i_r}\prod_{j_r>i_r}(1-\varpi_v^{q^{j_r}}t)\right)^{s_r}u_1^{q^{i_1}}\cdots u_r^{q^{i_r}}.
        \end{align*}
        If we express 
        $$\left(\prod_{j_1>i_1}(1-\varpi_v^{q^{j_1}}t)\right)^{s_1}\cdots\left(\prod_{j_r>i_r}(1-\varpi_v^{q^{j_r}}t)\right)^{s_r}=\sum_{\ell\geq 0}c_\ell t^\ell,$$
        then we have $$|c_\ell|_vR^\ell\to 0\mbox{ as }\ell\to\infty\mbox{ for all }R>0.$$ Thus, it suffices to determine the radius of convergence of the series 
        $$\sum_{i_1>\cdots>i_r\geq 0,~\ell\geq 0}
        c_\ell u_1^{q^{i_1}}\cdots u_r^{q^{i_r}}t^{i_1s_1+\cdots+i_rs_r+\ell}.$$
        The condition $i_1>\cdots>i_r\geq 0$ and $|u_1|_v<1$ implies that 
        $$|u_1^{q^{i_1}}\cdots u_r^{q^{i_r}}|_vR^{i_1s_1+\cdots+i_rs_r}\leq |u_1|_v^{q^{i_1}}(R^{\wt(\fs)})^{i_1},\mbox{ if }R>1,$$ 
        and 
        $$|u_1^{q^{i_1}}\cdots u_r^{q^{i_r}}|_vR^{i_1s_1+\cdots+i_rs_r}\leq |u_1|_v^{q^{i_1}}R^{i_1},\mbox{ if }0<R\leq 1.$$ 
        But in any case we can conclude that 
        $$|c_\ell u_1^{q^{i_1}}\cdots u_r^{q^{i_r}}|_vR^{i_1s_1+\cdots+i_rs_r+\ell}\to 0\mbox{ as }i_1s_1+\cdots i_rs_r+\ell\to \infty\mbox{ for all }R>0.$$
        In other words, $\mathfrak{L}_{\fs,\bu}$ defines an entire function on $\mathbb{C}_v$. We now check the algebracity for the coefficients. We will prove it by using induction on the depth $r$. For $r=1$, we have
        \begin{equation}\label{functional_eq_1}
            \mathfrak{L}_{\fs,\bu}^{(-1)}=u_1^{q^{-1}}(1-\varpi_vt)^{s_1}\Omega_{\varpi_v}^{s_1}+t^{s_1}\mathfrak{L}_{\fs,\bu}.
        \end{equation}
        Since $\Omega_{\varpi_v}^{s_1}\in\mathcal{E}_v$ by Proposition~\ref{Prop:Basic_Ex}, we have $u_1^{q^{-1}}(1-\varpi_vt)^{s_1}\Omega_{\varpi_v}^{s_1}\in\mathcal{E}_v$. Thus we can express
        \[
            u_1^{q^{-1}}(1-\varpi_vt)^{s_1}\Omega_{\varpi_v}^{s_1}=\sum_{i\geq 0}d_it^i,
        \]
        where $\{d_i\}\subset \overline{k}$ and $[k_v(d_0,d_1,\dots):k_v]<\infty$. Now we express $\mathfrak{L}_{\fs,\bu}=\sum_{i\geq 0}C_it^i$ for some $C_i\in\mathbb{C}_v$. By
        (\ref{functional_eq_1}), we have
        \[
            C_\ell^{1/q}=d_\ell\mbox{, if }\ell<s_1
        \]
        and
        \[
            C_\ell^{1/q}=d_\ell+C_{\ell-s_1}\mbox{, if }\ell\geq s_1.
        \]
        This implies the algebracity for the coefficients of $\mathfrak{L}_{\fs,\bu}$ and we complete the case of $r=1$. Now we assume that the desired property $\mathfrak{L}_{\fs,\bu}\in\mathcal{E}_v$ holds for all $\fs$ with $r=\dep(\fs)\leq\ell-1$ and $\bu\in(\overline{k}^\times)^r\cap\DDConv_{\fs,v}$. To establish the case of $r=\ell$, we note that
        \begin{equation}\label{functional_eq_2}
            \mathfrak{L}_{\fs,\bu}^{(-1)}=u_\ell^{q^{-1}}(1-\varpi_vt)^{s_\ell}\Omega_{\varpi_v}^{s_\ell}\mathfrak{L}_{\fs',\bu'}+t^{s_1+\cdots+s_\ell}\mathfrak{L}_{\fs,\bu}
        \end{equation}
        where $\fs'$ (resp. $\bu'$) is the sub-index (resp. sub-tuple) obtaining from $\fs$ (resp. $\bu$) by deleting the last entry. Then the induction hypothesis and the same argument in the case $r=1$ imply the desired result.
    \end{proof}
    
    In what follows, we study the special values of $\mathfrak{L}_{\fs,\bu}$. In particular, CMPLs appear as specializations of $\mathfrak{L}_{\fs,\bu}$ at the specific algebraic points.
    
    \begin{proposition}\label{pMZ_2}
        Let $\fs=(s_1,\dots,s_r)\in\mathbb{Z}_{>0}^{r}$ be an index and $\bu=(u_1,\dots,u_r)\in (\overline{k}^\times)^r\cap\DDConv_{\fs,v}$. For any non-negative integer $N$, we have
        \[
            \mathfrak{L}_{\fs,\bu}(\varpi_v^{-q^N})=\left(\Tilde{\pi}_{\varpi_v}^{s_1+\cdots+s_r}\Li_\fs(u_1,\dots,u_r)_v\right)^{q^N}.
        \]
    \end{proposition}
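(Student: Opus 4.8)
The plan is to evaluate the twisted series $\Omega_{\varpi_v}^{(i)}$ at the algebraic points $\varpi_v^{-q^N}$ in closed form, substitute into the defining series of $\mathfrak{L}_{\fs,\bu}$, and reorganize the resulting sum as a $q^N$-th power of the series defining $\Tilde{\pi}_{\varpi_v}^{\wt(\fs)}\Li_\fs(\bu)_v$. Everything hinges on the degree-one hypothesis $\varpi_v=\theta+\lambda_v$ with $\lambda_v\in\mathbb{F}_q$: since $\lambda_v^{q^m}=\lambda_v$ for all $m$, one has $\varpi_v^{q^N}-\varpi_v^{q^m}=\theta^{q^N}-\theta^{q^m}=(\theta-\theta^{q^{m-N}})^{q^N}$ for $m\geq N\geq 0$, and this is exactly what converts products of the factors $1-\varpi_v^{q^m}t$ occurring in $\Omega_{\varpi_v}$ into the quantities $L_i=(\theta-\theta^q)\cdots(\theta-\theta^{q^i})$ appearing in the series (\ref{Def:CMPL}).

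First I would record the basic shape of $\Omega_{\varpi_v}$: iterating the functional equation $\Omega_{\varpi_v}^{(-1)}=(1-\varpi_v t)\Omega_{\varpi_v}$ of Proposition~\ref{Prop:Basic_Ex} gives $\Omega_{\varpi_v}^{(i)}(t)=\Omega_{\varpi_v}(t)/\prod_{m=1}^{i}(1-\varpi_v^{q^m}t)$, i.e.\ $\Omega_{\varpi_v}^{(i)}(t)=\prod_{m>i}(1-\varpi_v^{q^m}t)$ as an element of $\mathcal{E}_v$. Specializing at $t=\varpi_v^{-1}$ and using the identity above with $N=0$, the product $\prod_{m=1}^{i}(1-\varpi_v^{q^m-1})=\prod_{m=1}^{i}\frac{\varpi_v-\varpi_v^{q^m}}{\varpi_v}$ equals $L_i/\varpi_v^{i}$, so $\Omega_{\varpi_v}^{(i)}(\varpi_v^{-1})=\varpi_v^{i}\Tilde{\pi}_{\varpi_v}/L_i$. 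Writing $\mathfrak{L}_{\fs,\bu}(t)=\sum_{i_1>\cdots>i_r\geq 0}u_1^{q^{i_1}}\cdots u_r^{q^{i_r}}\prod_{\ell=1}^{r}\bigl(t^{i_\ell}\Omega_{\varpi_v}^{(i_\ell)}(t)\bigr)^{s_\ell}$ and specializing at $t=\varpi_v^{-1}$, the factors $\varpi_v^{i_\ell}$ cancel against $t^{i_\ell}=\varpi_v^{-i_\ell}$ and the sum collapses to $\Tilde{\pi}_{\varpi_v}^{\wt(\fs)}\sum_{i_1>\cdots>i_r\geq 0}\frac{u_1^{q^{i_1}}\cdots u_r^{q^{i_r}}}{L_{i_1}^{s_1}\cdots L_{i_r}^{s_r}}=\Tilde{\pi}_{\varpi_v}^{\wt(\fs)}\Li_\fs(\bu)_v$; this settles the case $N=0$.

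For general $N$ the crucial input is the value of $\Omega_{\varpi_v}^{(i)}$ at $\varpi_v^{-q^N}$. Since $\mathfrak{L}_{\fs,\bu}\in\mathcal{E}_v$ (Proposition~\ref{pMZ_1}) and the relevant infinite products converge $v$-adically, termwise specialization is legitimate even at points where individual factors of $\Omega_{\varpi_v}$ vanish; combining $\Omega_{\varpi_v}^{(i)}=\bigl(\Omega_{\varpi_v}^{(i-N)}\bigr)^{(N)}$ with the elementary fact $f^{(N)}(\varpi_v^{-q^N})=\bigl(f(\varpi_v^{-1})\bigr)^{q^N}$ for $f=\sum a_nt^n\in\overline{k}\llbracket t\rrbracket$ gives $\Omega_{\varpi_v}^{(i)}(\varpi_v^{-q^N})=\bigl(\Omega_{\varpi_v}^{(i-N)}(\varpi_v^{-1})\bigr)^{q^N}$, which vanishes precisely when $i<N$ (then $\Omega_{\varpi_v}^{(i-N)}$ carries a factor $1-\varpi_v t$, killed at $t=\varpi_v^{-1}$) and equals $\bigl(\varpi_v^{i-N}\Tilde{\pi}_{\varpi_v}/L_{i-N}\bigr)^{q^N}$ for $i\geq N$. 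Plugging this into the series for $\mathfrak{L}_{\fs,\bu}(\varpi_v^{-q^N})$, only the terms with $i_r\geq N$ survive; after the shift $m_\ell:=i_\ell-N$ the common $q^N$-th power can be pulled out of the whole sum and the remaining series is identified with the one defining $\Tilde{\pi}_{\varpi_v}^{\wt(\fs)}\Li_\fs(\bu)_v$, giving the asserted formula. An alternative is to induct on $N$, stepping from $N$ to $N+1$ through the functional equations (\ref{functional_eq_1}) and (\ref{functional_eq_2}) — using that $\Omega_{\varpi_v}$ vanishes at $\varpi_v^{-q^N}$ for $N\geq 1$, so the $\Omega_{\varpi_v}$-contributions drop out — with an inner induction on the depth $r$ to handle (\ref{functional_eq_2}).

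I expect the main obstacle to be bookkeeping rather than any single hard idea: one must know that the multiple series defining $\mathfrak{L}_{\fs,\bu}$ and $\Li_\fs(\bu)_v$ are $v$-adically absolutely convergent and may be rearranged and specialized termwise — which is exactly what Proposition~\ref{pMZ_1} and the estimates in its proof supply — and one must keep the Frobenius twists straight when passing between $\Omega_{\varpi_v}^{(i)}(\varpi_v^{-q^N})$ and $\Omega_{\varpi_v}^{(i-N)}(\varpi_v^{-1})$; once these are in hand the computation is routine.
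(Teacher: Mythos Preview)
Your proposal is correct and follows essentially the same route as the paper. Both arguments first treat $N=0$ by reducing the general term of $\mathfrak{L}_{\fs,\bu}(\varpi_v^{-1})$ to the CMPL summand via the degree-one identity $\varpi_v-\varpi_v^{q^m}=\theta-\theta^{q^m}$, and then handle $N>0$ by observing that $\Omega_{\varpi_v}^{(i)}(\varpi_v^{-q^N})=0$ for $i<N$ (so the sum truncates to $i_r\geq N$) and recognizing the surviving sum as the $q^N$-th power of the $N=0$ expression; your use of the identity $f^{(N)}(\varpi_v^{-q^N})=\bigl(f(\varpi_v^{-1})\bigr)^{q^N}$ makes explicit the step the paper compresses into one displayed equality.
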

        
    \begin{proof}
        We begin with the case of $N=0$. Consider the general term of $\mathfrak{L}_{\fs,\bu}(\varpi_v^{-1})$.
        \begin{align*}
            \frac{u_1^{q^{i_1}}\cdots u_r^{q^{i_r}}\varpi_v^{-(i_1 s_1+\cdots+i_r s_r)}}{\prod_{1\leq\ell\leq r}\bigg((1-\varpi_v^{q-1})\cdots(1-\varpi_v^{q^{i_\ell}-1})\bigg)^{s_\ell}}
            &=\frac{u_1^{q^{i_1}}\cdots u_r^{q^{i_r}}}{\prod_{1\leq\ell\leq r}\bigg((\varpi-\varpi_v^{q})\cdots(\varpi-\varpi_v^{q^{i_\ell}})\bigg)^{s_\ell}}\\
            &=\frac{u_1^{q^{i_1}}\cdots u_r^{q^{i_r}}}{\prod_{1\leq\ell\leq r}\bigg((\theta-\theta^{q})\cdots(\theta-\theta^{q^{i_\ell}})\bigg)^{s_\ell}}.
        \end{align*}
        The second equality comes from the fact that $\varpi_v$ is of the form $\theta+\lambda_v$ with $\lambda_v\in\mathbb{F}_q$ and $\lambda_v$ is fixed by $q$-th power map. The case $N=0$ follows immediately.
        Now we consider the case of $N>0$. By definition we have 
        \[
            \left(\Omega_{\varpi_v}^{(i_\ell)}t^{i_\ell}\right)^{s_\ell}=\left(\frac{\Omega_{\varpi_v}t^{i_\ell}}{(1-\varpi_v^qt)\cdots(1-\varpi_v^{q^{i_\ell}}t)}\right)^{s_\ell}=\left(t^{i_\ell}\prod_{j_\ell>i_\ell}(1-\varpi_v^{q^{j_\ell}}t)\right)^{s_\ell}.
        \]
        In particular, if $i_\ell<N$ then 
        \[
            \left(\Omega_{\varpi_v}^{(i_\ell)}t^{i_\ell}\right)^{s_\ell}\mid_{t=\varpi_v^{-q^N}}=\left(t^{i_\ell}\prod_{j_\ell>i_\ell}(1-\varpi_v^{q^{j_\ell}}t)\right)^{s_\ell}\mid_{t=\varpi_v^{-q^N}}=0.
        \]
        Consequently, we have 
        \begin{align*}
            \mathfrak{L}_{\fs,\bu}(\varpi_v^{-q^N})&=\sum_{i_1>\cdots>i_r\geq 0}(\Omega_{\varpi_v}^{s_1}u_1)^{(i_1)}\dots(\Omega_{\varpi_v}^{s_r}u_r)^{(i_r)}t^{i_1 s_1+\cdots+i_r s_r}\mid_{t=\varpi_v^{-q^N}}\\
            &=\sum_{i_1>\cdots>i_r\geq 0}\left(\Omega_{\varpi_v}^{(i_1)}t^{i_1}\right)^{s_1}\cdots\left(\Omega_{\varpi_v}^{(i_1)}t^{i_1}\right)^{s_1}u_1^{q^{i_1}}\cdots u_r^{q^{i_r}}\mid_{t=\varpi_v^{-q^N}}\\
            &=\sum_{i_1>\cdots>i_r\geq N}\left(\Omega_{\varpi_v}^{(i_1)}t^{i_1}\right)^{s_1}\cdots\left(\Omega_{\varpi_v}^{(i_1)}t^{i_1}\right)^{s_1}u_1^{q^{i_1}}\cdots u_r^{q^{i_r}}\mid_{t=\varpi_v^{-q^N}}\\
            &=\left(\sum_{i_1>\cdots>i_r\geq 0}(\Omega_{\varpi_v}^{s_1}u_1)^{(i_1)}\dots(\Omega_{\varpi_v}^{s_r}u_r)^{(i_r)}t^{i_1 s_1+\cdots+i_r s_r}\mid_{t=\varpi_v^{-1}}\right)^{q^N}\\
            &=\left(\Tilde{\pi}_{\varpi_v}^{s_1+\cdots+s_r}\Li_\fs(u_1,\dots,u_r)_v\right)^{q^N}.
        \end{align*}
    \end{proof}
        
    {\textbf{Proof of Theorem~\ref{Thm:CMPLs}}:}
    Recall our notation that $\fs=(s_1,\dots,s_r)\in\mathbb{Z}_{>0}^{r}$ is an index and $\bu=(u_1,\dots,u_r)\in (\overline{k}^\times)^r\cap\DDConv_{\fs,v}$. We define
    $\fs_\ell:=(s_1,\dots,s_\ell),~\bu_\ell:=(u_1,\dots,u_\ell)$ and 
    $$\psi=
        \left(
        \begin{array}{clr}
            \Omega_{\varpi_v}^{s_1+\cdots+s_r}  \\
            \Omega_{\varpi_v}^{s_2+\cdots+s_r}\mathcal{L}_{\fs_1,\bu_1} \\
            \vdots  \\
            \Omega_{\varpi_v}^{s_r}\mathcal{L}_{\fs_{r-1},\bu_{r-1}}\\
            \mathcal{L}_{\fs_{r},\bu_{r}}
        \end{array}
    \right)\in\Mat_{\ell\times 1}(\mathcal{E}_v).
    $$
    Consider the matrix $\Phi\in\Mat_{\ell\times\ell}(\overline{k}[t])$ which is defined by
    $$
    \begin{pmatrix}
    (1-\varpi_vt)^{s_1+\cdots+s_r} & 0 & 0 & \cdots & 0 \\
    u_1^{q^{-1}}(1-\varpi_vt)^{s_1+\cdots+s_r} & t^{s_1}(1-\varpi_vt)^{s_2+\cdots+s_r} & 0 & \cdots & 0\\
    0 & u_2^{q^{-1}}t^{s_1}(1-\varpi_vt)^{s_2+\cdots+s_r} & \ddots &  & \vdots \\
    \vdots &  & \ddots & t^{s_1+\cdots+s_{r-1}}(1-\varpi_vt)^{s_r} & 0\\
    0 & \cdots & 0 & u_r^{q^{-1}}t^{s_1+\cdots+s_{r-1}}(1-\varpi_vt)^{s_r} & t^{s_1+\cdots+s_r}
    \end{pmatrix}.
    $$
    Then one can check directly that $\psi^{(-1)}=\Phi\psi$. For $w\geq 1$, let 
    $$V_w:=\{\Li_\fs(\bu)_v\mid r\in\mathbb{Z}_{>0},~\fs\in\mathbb{Z}_{>0}^r,~\bu\in (\overline{k}^\times)^r\cap\DDConv_{\fs,v},~\wt(\fs)=w\}.$$
    By Proposition~\ref{pMZ_1} and Proposition~\ref{pMZ_2}, we deduce that $(\mathcal{L}_v,\{\mathcal{L}_{v,w}\},\{V_w\},\{t^w\},\varpi_v)$ forms a $v$-adic graded MPL system. Then Theorem \ref{Thm:GMZ} gives the desired result.\\
    \qed
    
\subsection{Multiple zeta values}
    For any index $(s_1,\dots,s_r)\in\mathbb{Z}_{>0}^r$, the $\infty$-adic MZV $\zeta_A(\fs)$ is defined in (\ref{Eq:MZV}). To introduce the definition of $v$-adic MZV, we define the $\fs$-th Carlitz multiple star polylogarithm, abbreviated as CMSPLs, as follows (see~\cite{CM19}):
    \begin{equation}\label{Def:CMSPL}
        \Li_\fs^\star(z_1,\dots,z_r):=\underset{i_1\geq\cdots\geq i_r\geq 0}{\sum}\frac{z_1^{q^{i_1}}\cdots z_r^{q^{i_r}}}{L_{i_1}^{s_1}\cdots L_{i_r}^{s_r}}\in k\llbracket z_1,\cdots,z_r \rrbracket.
    \end{equation}
    We denote by $\Li_\fs^\star(z_1,\cdots,z_r)_v$ when we regard this power series as a $v$-adic function. Note that $\Li_\fs^\star$ also converges on
    $$\DDConv_{\fs,v}=\{(z_1,\dots,z_r)\in\mathbb{C}_v^r\mid |z_1|_v<1,|z_\ell|_v\leq 1\mbox{ for }2\leq\ell\leq r\}.$$
    
    In what follows, we recall an important identity given by Chang and Mishiba in \cite{CM21} which is based on the result established by Chang in \cite{Cha14}.
    
    \begin{theorem}[{\cite[Theorem~5.2.5]{CM21}}, cf. {\cite[Theorem~5.5.2]{Cha14}}]\label{Thm:MZVs_to_CMSPLs}
        For any index $\fs=(s_1,\dots,s_r)\in\mathbb{Z}_{>0}^r$, we have
        \begin{equation}\label{Eq:MZVs_to_CMSPLs}
            \zeta_A(\fs)=\sum_\ell b_\ell\cdot\Li^\star_{\fs_\ell}(\bu_\ell)\in k_\infty
        \end{equation}
        for some explicit $\fs_\ell\in\mathbb{Z}_{>0}^{\dep(\fs_\ell)}$ with $\wt(\fs_\ell)=\wt(\fs)$ and $\dep(\fs_\ell)\leq\dep(\fs)$, $\bu_\ell\in A^{\dep(\fs_\ell)}$ and $b_\ell\in k$.
    \end{theorem}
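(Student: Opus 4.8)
This statement refines \cite[Thm.~5.5.2]{Cha14}, and the plan is to deduce it from the Anderson--Thakur interpolation of Carlitz power sums together with an elementary passage from CMPLs to CMSPLs. The one genuinely deep ingredient is Anderson--Thakur \cite{AT90}: writing $A_+$ for the monic polynomials, $S_i(s):=\sum_{a\in A_+,\ \deg_\theta a=i}a^{-s}$, and letting $H_{s-1}(t)=\sum_{m\ge 0}h_{s,m}t^m\in A[t]$ be the associated Anderson--Thakur polynomial, which satisfies $\deg_t H_{s-1}<sq/(q-1)$, one has
\[
\Gamma_s\,S_i(s)=\frac{H_{s-1}(\theta^{q^i})}{L_i^{\,s}}=\frac{1}{L_i^{\,s}}\sum_{m}h_{s,m}\,(\theta^{m})^{q^i},
\]
equivalently $\Gamma_s\zeta_A(s)=\sum_m h_{s,m}\,\Li_s(\theta^{m})$, where every argument $\theta^m$ lies in $A$ and, by the degree bound, inside $\DDConv_{(s),\infty}$. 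I take this depth-one statement as given.

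First I would reduce an arbitrary index to this input. Splitting the nested sum $\zeta_A(\fs)=\sum a_1^{-s_1}\cdots a_r^{-s_r}$ (over $a_i\in A_+$ with $\deg a_1>\cdots>\deg a_r$) by the degrees $i_1>\cdots>i_r\ge0$ gives $\zeta_A(\fs)=\sum_{i_1>\cdots>i_r\ge0}\prod_{j=1}^r S_{i_j}(s_j)$, and substituting the Anderson--Thakur formula into each factor yields
\[
\Big(\prod_{j=1}^r\Gamma_{s_j}\Big)\zeta_A(\fs)=\sum_{i_1>\cdots>i_r\ge0}\ \prod_{j=1}^r\frac{H_{s_j-1}(\theta^{q^{i_j}})}{L_{i_j}^{\,s_j}}=\sum_{(m_1,\dots,m_r)}\Big(\prod_{j=1}^r h_{s_j,m_j}\Big)\Li_\fs(\theta^{m_1},\dots,\theta^{m_r}),
\]
the outer sum being finite because the $t$-degrees of the $H_{s_j-1}$ are bounded. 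Thus $\zeta_A(\fs)$ is a $k$-linear combination --- the coefficients lie in $k$ since $\Gamma_{s_j}\in A$ and $\prod_j h_{s_j,m_j}\in A$ --- of CMPL values $\Li_\fs(\bu)$ at points $\bu\in A^r\cap\DDConv_{\fs,\infty}$, all of weight $\wt(\fs)$ and depth $r$. This recovers \cite[Thm.~5.5.2]{Cha14}.

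Next I would convert $\Li$ into $\Li^\star$. Decomposing the summation range $i_1\ge\cdots\ge i_r\ge0$ of $\Li_\fs^\star$ according to which consecutive indices coincide writes $\Li_\fs^\star(\bu)$ as the sum over all coarsenings $\fs'$ of $\fs$ --- obtained by merging consecutive blocks, adding the $s_j$ within each block and multiplying the corresponding coordinates of $\bu$ --- of the non-star values $\Li_{\fs'}(\bu')$; Möbius inversion over the Boolean lattice of the $r-1$ separating gaps inverts this triangular system, giving $\Li_\fs(\bu)=\sum_{\fs'}(\pm1)\,\Li_{\fs'}^\star(\bu')$. Every such $\fs'$ has $\wt(\fs')=\wt(\fs)$ and $\dep(\fs')\le r$, every $\bu'$ again lies in $A^{\dep(\fs')}$ (its coordinates are products of coordinates of $\bu$) and in the corresponding convergence domain, and the scalars $\pm1$ lie in $\mathbb{F}_q\subset k$. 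Substituting this into the formula of the previous step and collecting terms produces exactly \eqref{Eq:MZVs_to_CMSPLs}, with $b_\ell\in k$, $\bu_\ell\in A^{\dep(\fs_\ell)}$, $\wt(\fs_\ell)=\wt(\fs)$, and $\dep(\fs_\ell)\le\dep(\fs)$.

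The main work, aside from invoking the Anderson--Thakur formula itself (which is the real theorem here), is the bookkeeping in the first step: one must check that after substitution the iterated sum genuinely collapses into finitely many honest CMPL series --- that the Frobenius twists $(\theta^{m_j})^{q^{i_j}}$ pair correctly with the denominators $L_{i_j}^{\,s_j}$ --- that the new arguments land in $A$ and in $\DDConv_{\fs,\infty}$, which is precisely where the bound $\deg_t H_{s-1}<sq/(q-1)$ enters, and that weights are preserved on the nose. The $\Li\leftrightarrow\Li^\star$ passage and the verification that all scalars remain in $k$ are purely formal.
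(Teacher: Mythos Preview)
The paper does not supply its own proof of this theorem: it is quoted verbatim from \cite[Thm.~5.2.5]{CM21} (which in turn rests on \cite[Thm.~5.5.2]{Cha14}) and is used as a black box to motivate Definition~\ref{Def:vMZV}. So there is no in-paper proof to compare against.

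That said, your outline is precisely the route taken in the cited sources. The first step---factoring $\zeta_A(\fs)$ over degrees into $\sum_{i_1>\cdots>i_r\ge0}\prod_j S_{i_j}(s_j)$ and inserting the Anderson--Thakur interpolation $\Gamma_s S_i(s)=H_{s-1}(t)^{(i)}|_{t=\theta}/L_i^s$ to obtain a finite $k$-linear combination of CMPLs at integral points---is exactly \cite[Thm.~5.5.2]{Cha14}. The second step---the triangular $\Li\leftrightarrow\Li^\star$ conversion via inclusion-exclusion over coarsenings (the same identity underlying Proposition~\ref{Prop:CMSPLs=CMPLs} here)---is what \cite{CM21} layers on top. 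Your bookkeeping remarks (the role of the bound $\deg_t H_{s-1}<sq/(q-1)$ for landing in $\DDConv_{\fs,\infty}$, and the preservation of weight with non-increase of depth under coarsening) are the right checkpoints. Nothing is missing from your sketch.
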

    
    Let $\mathfrak{p}\in\mathbb{Z}$ be a prime number. Inspired by Furusho's strategy for defining $\mathfrak{p}$-adic MZVs, Chang and Mishiba \cite{CM19} extend the defining domain of $\Li_\fs^\star$ to the set
    $$\DDDef_{\fs,v}=\{(z_1,\dots,z_r)\in\mathbb{C}_v^r\mid |z_\ell|_v\leq 1\mbox{ for }1\leq\ell\leq r\}.$$
    In particular, we have $A^r\subset\DDDef_{\fs,v}$. Then the $v$-adic MZVs defined by Chang and Mishiba in \cite{CM21} are given by the following.
    
    \begin{definition}[{\cite[Def.~6.1.1]{CM21}}]\label{Def:vMZV}
        For any index $\fs=(s_1,\dots,s_r)\in\mathbb{Z}_{>0}^r$,
        $$\zeta_A(\fs)_v:=\sum_\ell b_\ell\cdot\Li^\star_{\fs_\ell}(\bu_\ell)_v\in k_v$$ where $\fs_\ell$, $\bu_\ell$ and $b_\ell\in k$ are given in Theorem \ref{Thm:MZVs_to_CMSPLs}.
    \end{definition}
    
    To prove Theorem \ref{Main_Thm}, we need to investigate some properties of $v$-adic CMSPLs. Let $\mathcal{L}_v^\star$ be the $k$-vector space spanned by $1$ and $\Li_\fs^\star(\bu)_v$ with $\bu\in(\overline{k}^\times)^r\cap\DDConv_{\fs,v}$. For $w\geq 1$, let $\mathcal{L}_{v,w}^\star$ be the $k$-vector space spanned by $\Li_\fs^\star(\bu)_v$ with $\bu\in(\overline{k}^\times)^r\cap\DDConv_{\fs,v}$ and $\wt(\fs)=w$. It is known that $\mathcal{L}_v^\star$ forms a $k$-algebra and $\mathcal{L}_{v,w_1}^\star\mathcal{L}_{v,w_2}^\star\subset\mathcal{L}_{v,w_1+w_2}^\star$ by the stuffle relations (see \cite[Prop.~2.2.3]{CCM22}).
    
    Let $w\in\mathbb{Z}_{>0}.$ By using inclusive-exclusive principle on the set $\{i_1\geq\cdots\geq i_r\geq 0\}$ and the definition of $\DDConv_{\fs,v}$, it is not hard to see that $v$-adic CMSPLs of weight $w$ can be written as $\mathbb{F}_q$-linear combinations of $v$-adic CMPLs of weight $w$. For example, $$\{i_1\geq i_2\geq 0\}=\{i_1>i_2\geq 0\}\cup \{i_1=i_2\geq 0\}.$$ It follows that $$\Li^\star_{(s_1,s_2)}(u_1,u_2)_v=\Li_{(s_1,s_2)}(u_1,u_2)+\Li_{(s_1+s_2)}(u_1\cdot u_2).$$
    
    Conversely, $v$-adic CMPLs of weight $w$ can be written as $\mathbb{F}_q$-linear combinations of $v$-adic CMSPLs of weight $w$ by similar argument. (cf. \cite[Prop.~5.2.3]{CM21}). Thus, the following proposition is straightforward.
    
    \begin{proposition}\label{Prop:CMSPLs=CMPLs}
        Let $w\in\mathbb{Z}_{>0}$. Then the following assertion holds.
        $$\mathcal{L}_{v,w}^\star=\mathcal{L}_{v,w}.$$
    \end{proposition}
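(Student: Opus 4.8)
The plan is to prove the two inclusions $\mathcal{L}_{v,w}^\star\subseteq\mathcal{L}_{v,w}$ and $\mathcal{L}_{v,w}\subseteq\mathcal{L}_{v,w}^\star$ separately, in both cases by an inclusion--exclusion (M\"obius inversion) argument on the index of summation, while keeping track of the $v$-adic convergence domain throughout. For $w\geq 1$ neither space involves the constant $1$ among its spanning set, so only the CMPL/CMSPL generators need to be compared.

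For the first inclusion, fix $\fs=(s_1,\dots,s_r)$ with $\wt(\fs)=w$ and $\bu=(u_1,\dots,u_r)\in(\overline{k}^\times)^r\cap\DDConv_{\fs,v}$. I would write the region $\{i_1\geq\cdots\geq i_r\geq 0\}$ as the disjoint union, over all decompositions of $\{1,\dots,r\}$ into consecutive blocks, of the subregions on which the $i_j$ are constant on each block and strictly decreasing from one block to the next. Collapsing a block $\{j,\dots,j+m\}$ merges the relevant factors into a single index entry $s_j+\cdots+s_{j+m}$ and a single variable $u_j\cdots u_{j+m}$, so the contribution of a block decomposition is $\Li_{\fs'}(\bu')_v$, where $\fs'$ is the coarsened index (still of weight $w$) and $\bu'\in(\overline{k}^\times)^{\dep(\fs')}$ is the merged tuple. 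This yields an identity $\Li^\star_\fs(\bu)_v=\sum_{\fs'}\Li_{\fs'}(\bu')_v$ with all coefficients equal to $1$, the sum running over coarsenings of $\fs$. The only point to verify is $\bu'\in\DDConv_{\fs',v}$: products of nonzero algebraic numbers are nonzero algebraic; the first merged block contains $u_1$ with $|u_1|_v<1$ while every other $u_\ell$ satisfies $|u_\ell|_v\leq 1$, so the leading coordinate of $\bu'$ still has $v$-absolute value $<1$ and the remaining coordinates remain $\leq 1$. Hence $\Li^\star_\fs(\bu)_v\in\mathcal{L}_{v,w}$.

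For the reverse inclusion, the identity just established is triangular with respect to the Boolean lattice on the $r-1$ gaps between consecutive entries of $\fs$, so I would M\"obius-invert it to obtain $\Li_\fs(\bu)_v=\sum_{\fs'}(-1)^{c(\fs,\fs')}\Li^\star_{\fs'}(\bu')_v$, where $c(\fs,\fs')$ records the number of merges producing the coarsening $\fs'$. Since $-1\in\FF_q$, this is an $\FF_q$-linear combination; each $\fs'$ has weight $w$; and the same estimate as above places every $\bu'$ in $(\overline{k}^\times)^{\dep(\fs')}\cap\DDConv_{\fs',v}$. Therefore $\Li_\fs(\bu)_v\in\mathcal{L}_{v,w}^\star$, and combining the two inclusions gives $\mathcal{L}_{v,w}^\star=\mathcal{L}_{v,w}$. (The reverse direction also essentially appears in \cite[Prop.~5.2.3]{CM21}.)

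The whole argument is combinatorial bookkeeping, and I expect the only genuine---though minor---point to be checking that collapsing equal indices never ejects a tuple from the convergence domain; this is precisely where the asymmetry between the first coordinate ($|u_1|_v<1$) and the remaining coordinates ($|u_\ell|_v\leq 1$) in the definition of $\DDConv_{\fs,v}$ is used, and it is what makes the merged arguments admissible on both sides.
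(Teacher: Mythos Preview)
Your proposal is correct and matches the paper's own argument, which is only sketched: the paper likewise invokes the inclusion--exclusion decomposition of $\{i_1\geq\cdots\geq i_r\geq 0\}$ into strict chains via block coarsenings (illustrated for $r=2$), notes that the convergence domain is preserved by appealing to the definition of $\DDConv_{\fs,v}$, and cites \cite[Prop.~5.2.3]{CM21} for the reverse direction. Your write-up simply fills in the details the paper leaves implicit, including the explicit verification that the merged tuple stays in $\DDConv_{\fs',v}$.
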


    By using Proposition~\ref{Prop:CMSPLs=CMPLs} together with Theorem~\ref{Thm:CMPLs}, we are able to give the proof of Theorem~\ref{Main_Thm}.
    
    {\textbf{Proof of Theorem~\ref{Main_Thm}}:~}
    Let $\fs\in\mathbb{Z}_{>0}^r$ be any index and $w=\wt(\fs)\in\mathbb{Z}_{>0}$. It is known from \cite[Cor.~3.2.11]{Che23} (see also \cite[Rem.5.2.1~]{CCM22}) that $\zeta_A(\fs)_v\in\mathcal{L}_{v,w}^\star$. Then Proposition \ref{Prop:CMSPLs=CMPLs} implies that $\zeta_A(\fs)_v\in\mathcal{L}_{v,w}$ and thus $\mathcal{Z}_{v,w}\subset\mathcal{L}_{v,w}$.
    The desired result now follows from Theorem~\ref{Thm:CMPLs} immediately.\\
    \qed

    \begin{remark}
        Recall that $\overline{\mathcal{Z}}$ is the $\overline{k}$-algebra generated by all $\infty$-adic MZVs. It is shown in \cite[Thm.~1.2.3]{CCM22} that the natural map $\phi_v:=(\zeta_A(\fs)\mapsto\zeta_A(\fs)_v):\overline{\mathcal{Z}}\to\overline{\mathcal{Z}}_v$ is a well-defined surjective $\overline{k}$-algebra homomorphism. Moreover, they assert a conjecture \cite[Conj.~5.4.1]{CCM22} that the kernel of $\phi_v$ is the principal ideal generated by $\zeta_A(q-1)$. As an immediate consequence of Theorem \ref{Main_Thm}, we provide some information for the kernel of $\phi_v$. More precisely, Theorem~\ref{Main_Thm} implies that if $\epsilon_v=1$, then $\Ker\phi_v\subset\overline{\mathcal{Z}}$ is a homogeneous ideal.
    \end{remark}
    
    
    
\appendix
\section{}
\subsection{Some Lemmas}
    Throughout this section, let $k_0/k$ be a finite separable extension and $k_1$ be the perfection of $k_0$ in $\overline{k}$. Let $R_v\subset k$ be the ring of rational functions regular away from $v$. Let $R_1$ be the integral closure of $R_v$ in $k_1$ and $R_0:=k_0\cap R_1$. Note that $\overline{k}$ is the union of all its subfields of the form $k_1$. 
    For the convenience of our later use, we put 
    $$||x||_v:=\max_{\tau\in\Aut(\overline{k}/k)}|\tau x|_v\mbox{ for all }x\in\overline{k},$$
    $$||f||_v:=\max_{i=0}^r||a_i||_v\mbox{ for all }f=\sum_{i=0}^ra_it^i\in\overline{k}[t],$$
    and 
    $$||M||_v:=\max_{i,j}||M_{ij}||_v\mbox{ for all } M\in\Mat_{\ell\times\ell}(\overline{k}[t]).$$ 
    We further set 
    $$\deg_t M:=\max_{i,j}(\deg_tM_{ij})\mbox{ for all } M\in\Mat_{\ell\times\ell}(\overline{k}[t]).$$

To prove Theorem \ref{Thm:vABP}, we require some lemmas.
        
    \begin{lemma}[cf. {\cite[~(2.5)]{ABP04}}]\label{Schwarz_Jensen_formula}
        Fix $0\neq f\in\mathcal{E}_v$. Let $\{\omega_i\}$ be an enumeration of zeros of $f$ in $\mathbb{C}_v$ and $\lambda$ be the constant term of $f$. Then, we have        $$\sup_{x\in\mathbb{C}_v,|x|_v\leq r}|f(x)|_v=|\lambda|_v\cdot r^{\#\{i\mid\omega_i=0\}}\prod_{i:0<|\omega_i|_v<r}\frac{r}{|\omega_i|_v},~r\in\mathbb{R}_{>0}.$$ 
        In particular, if $r>1$ and$\{\omega_{i_j}\}_{j=1}^{n}\subset \{\omega_i\}$ is a finite subset of zeros of $f$ in $\mathbb{C}_v$, then 
        $$\sup_{x\in\mathbb{C}_v,|x|_v\leq r}|f(x)|_v\geq |\lambda|_v\cdot\prod_{j=1}^n\frac{r}{|\omega_{i_j}|_v}.$$
    \end{lemma}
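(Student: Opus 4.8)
The plan is to transcribe, essentially word for word, the proof of the $\infty$-adic statement \cite[(2.5)]{ABP04}: the only facts about the coefficient field that enter are that $\mathbb{C}_v$ is complete and algebraically closed, and that an element of $\mathcal{E}_v$ is a genuine entire function on $\mathbb{C}_v$ (which is exactly how $\mathcal{E}_v$ was set up). Write $f=\sum_{i\ge 0}a_it^i$ and fix $r\in\mathbb{R}_{>0}$. Let $m\ge 0$ be the order of vanishing of $f$ at $t=0$ and $\lambda=a_m$ the first nonzero Taylor coefficient, so that $\#\{i\mid\omega_i=0\}=m$ when the zeros $\omega_i$ are listed with multiplicity (if $f(0)\ne 0$ this is just $m=0$, $\lambda=a_0$, matching the literal ``constant term''). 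The first step is the non-archimedean maximum modulus identity: restricted to the closed disc $\{|x|_v\le r\}$ the function $f$ lies in the Tate algebra of that disc, and there
\[
\sup_{|x|_v\le r}|f(x)|_v=\|f\|_{v,r}:=\max_{i\ge 0}|a_i|_v\,r^i .
\]
The inequality ``$\le$'' is the ultrametric triangle inequality, and ``$\ge$'' (with the supremum actually attained) is standard once $\mathbb{C}_v$ is algebraically closed. I will also use two further standard facts about this disc algebra: the Gauss norm $\|\cdot\|_{v,r}$ is multiplicative, and an element with no zeros on $\{|x|_v\le r\}$ is a unit there.

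Next I would factor out the zeros of $f$ in the closed disc. Since $f\ne 0$ is entire it has only finitely many zeros in $\{|x|_v\le r\}$; dividing them out one at a time (each quotient remaining entire) gives $f=P\cdot g$ with
\[
P(t)=\lambda\,t^{m}\prod_{0<|\omega_i|_v\le r}(1-t/\omega_i)\in\mathbb{C}_v[t],
\]
the product over the zeros in the disc counted with multiplicity, and with $g$ a unit in the disc algebra satisfying $g(0)=1$. From multiplicativity, $\|g\|_{v,r}\,\|g^{-1}\|_{v,r}=1$; combining with $\|g\|_{v,r}\ge|g(0)|_v=1$ and $\|g^{-1}\|_{v,r}\ge|g(0)^{-1}|_v=1$ forces $\|g\|_{v,r}=1$, hence $\|f\|_{v,r}=\|P\|_{v,r}$. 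Finally $\|P\|_{v,r}=|\lambda|_v\cdot\|t^m\|_{v,r}\cdot\prod_i\|1-t/\omega_i\|_{v,r}$, where $\|t^m\|_{v,r}=r^m$ and $\|1-t/\omega_i\|_{v,r}=\max(1,\,r/|\omega_i|_v)$; this last quantity equals $1$ precisely when $|\omega_i|_v=r$ and equals $r/|\omega_i|_v$ when $|\omega_i|_v<r$, so the zeros on the boundary circle drop out and we obtain the asserted formula, namely $\sup_{|x|_v\le r}|f(x)|_v=|\lambda|_v\,r^{m}\prod_{0<|\omega_i|_v<r}r/|\omega_i|_v$.

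For the ``in particular'' statement, suppose $r>1$ and let $\{\omega_{i_j}\}_{j=1}^n$ be a finite subset of the (nonzero) zeros. Split $\prod_{j=1}^n r/|\omega_{i_j}|_v$ according to whether $|\omega_{i_j}|_v<r$ or $|\omega_{i_j}|_v\ge r$: the factors of the second kind are $\le 1$, while the indices of the first kind form a sub-multiset of $\{i\mid 0<|\omega_i|_v<r\}$, whose remaining factors in the main formula are all $>1$; since moreover $r^{m}\ge 1$ because $r>1$, multiplying through by $|\lambda|_v$ gives $|\lambda|_v\prod_{j=1}^n r/|\omega_{i_j}|_v\le\sup_{|x|_v\le r}|f(x)|_v$. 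I do not expect a genuine obstacle here: the entire argument is a transcription of the $\infty$-adic case, and the only points needing a little care are the bookkeeping of zeros of absolute value exactly $r$ (which must be checked to contribute trivially to $\|P\|_{v,r}$) and the convention for $\lambda$ and for $\#\{i\mid\omega_i=0\}$ when $f$ vanishes at the origin. The imported inputs — the maximum modulus identity, multiplicativity of the Gauss norm, and ``zeroless implies unit'' in a Tate algebra — are precisely those used in \cite{ABP04} and carry over verbatim with $\infty$ replaced by $v$.
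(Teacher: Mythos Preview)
Your proposal is correct and takes essentially the same approach as the paper: the paper's proof is a single sentence invoking the analogue of the Weierstrass factorization theorem \cite[Thm.~2.14]{Gos96}, and your factorization $f=P\cdot g$ together with the Gauss-norm computation is precisely that argument spelled out in detail. Your remark on the convention for $\lambda$ is well taken---the paper's later use of the lemma (in Claim~3 of the proof of Theorem~\ref{Thm:vABP}) indeed takes $\lambda_N$ to be the leading Taylor coefficient at the origin, matching your interpretation.
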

        
    \begin{proof}
        The desired result follows immediately by an analogue of the classical Weierstrass factorization theorem \cite[Thm.~2.14]{Gos96}.
    \end{proof}
        
    \begin{lemma}[cf. {\cite[~(3.3.2)]{ABP04}}]\label{lower_bound_from_size}
        Let $0\neq x\in R_1$. Then, we have 
        $$||x||_v\geq 1,~|x|_v\geq ||x||_v^{1-[k_0:k]}.$$
    \end{lemma}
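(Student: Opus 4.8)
Lemma \ref{lower_bound_from_size} is a standard product formula estimate, and I would prove it in two parts as stated.

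\medskip

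\textbf{The approach.} The plan is to exploit the product formula on the function field $k_0$: for a nonzero element $x \in k_0^\times$, the product of all normalized absolute values (over all places of $k_0$) equals $1$. Since $x \in R_1$, which is integral over $R_v$, the element $x$ is $w$-integral for every place $w$ of $k_0$ lying over a finite place of $k$ other than $v$; equivalently, the only places of $k_0$ at which $x$ can have a pole are the places above $v$. I would set up the argument relative to $k_0$ rather than $\overline{k}$, using that $\|x\|_v$ records the largest of the $v$-adic absolute values of the Galois conjugates of $x$, which translates into a statement about the places of $k_0$ above $v$.

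\medskip

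\textbf{Key steps.} First I would establish $\|x\|_v \ge 1$. For this, note that $x \in R_1$ means every Galois conjugate $\tau x$ is also integral over $R_v$, hence $|\tau x|_w \le 1$ for every place $w$ of $\overline{k}$ (or of a suitable finite $k_1$) not lying above $v$. Applying the product formula to $\tau x$ for the conjugate achieving the maximum, the contribution of the places away from $v$ is $\le 1$, so the $v$-adic contribution must be $\ge 1$; unwinding the normalization gives $\|x\|_v = \max_\tau |\tau x|_v \ge 1$. (If $x$ is in fact a unit away from $v$ one gets finer information, but $\ge 1$ suffices.) Second, for the inequality $|x|_v \ge \|x\|_v^{1-[k_0:k]}$: write $\|x\|_v = |\tau_0 x|_v$ for some $\tau_0 \in \Aut(\overline{k}/k)$. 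The places of $k_0$ above $v$ are exactly $\{v \circ \tau : \tau\}$ up to the usual identifications, and there are at most $[k_0:k]$ of them (counting is where $[k_0:k]$ enters); the product formula for $x$ over $k_0$ then reads (product of $v$-adic-type absolute values over places above $v$) $\cdot$ (product over the remaining places) $= 1$, and the second factor is $\le 1$ by integrality. Hence the product over places above $v$ is $\ge 1$. Bounding all but one of these factors above by $\|x\|_v$ (each conjugate absolute value is $\le \|x\|_v$) and isolating the factor $|x|_v$, one gets $|x|_v \cdot \|x\|_v^{[k_0:k]-1} \ge |x|_v \cdot (\text{product of the other factors}) \ge 1$, which rearranges to the claimed bound.

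\medskip

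\textbf{Main obstacle.} The genuinely delicate point is bookkeeping the normalizations: the absolute values $|\cdot|_v$ on $\overline{k}$ and the various completions are normalized so that $|\varpi_v|_v = q_v^{-1}$, whereas the product formula is naturally phrased with absolute values weighted by local degrees $[k_{0,w} : k_v]$. One must check that the product of the quantities $|\tau x|_v$ over a set of coset representatives for the places of $k_0$ above $v$, when each is raised to the correct local degree, recovers the $v$-component of the product formula; and that the bound on the number of such places by $[k_0:k]$ is applied to the exponents correctly so that the worst case yields exactly the exponent $1 - [k_0:k]$. Once the normalization is pinned down, the estimate is immediate from integrality plus the product formula, and no further input is needed.
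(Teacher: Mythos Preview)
Your approach is correct and shares the same underlying idea as the paper's proof: integrality of $x$ (and its conjugates) at every place away from $v$ forces the $v$-adic contributions to compensate. The paper, however, packages this more cleanly via the norm rather than the product formula on $k_0$: for $x\in R_0$ one observes that $N_{k_0/k}(x)\in R_0\cap k=R_v$, whence $|N_{k_0/k}(x)|_v\ge 1$; since $|N_{k_0/k}(x)|_v=\prod_\sigma|\sigma x|_v$ with the product running over the $[k_0:k]$ distinct $k$-embeddings $\sigma:k_0\hookrightarrow\overline{k}$, bounding every factor by $\|x\|_v$ gives $\|x\|_v\ge 1$, and bounding all but the identity factor by $\|x\|_v$ gives $|x|_v\cdot\|x\|_v^{[k_0:k]-1}\ge 1$. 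This sidesteps exactly the normalization bookkeeping you flagged as the main obstacle---you never have to match places of $k_0$ above $v$ with embeddings or track local degrees. The passage from $R_0$ to $R_1=\bigcup_{\mu\ge 0} R_0^{q^{-\mu}}$ is then immediate by applying the $R_0$ case to $x^{q^\mu}$ and taking $q^\mu$-th roots, a step your sketch does not make explicit but which is needed since $R_1$ is not itself contained in a finite extension of $k$.
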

        
    \begin{proof}
        Let $0\neq x\in R_0$. Then 
        $$||x||_v=(||x||_v^{[k_0:k]})^{1/[k_0:k]}\geq(\prod_{\tau\in\Aut(k_0/k)}|\tau x|_v)^{1/[k_0:k]}=|N_{k_0/k}(x)|_v^{1/[k_0:k]}\geq 1.$$ 
        The last inequality comes from the fact that $N_{k_0/k}(x)\in R_0\cap k=R_v$. However, we have
        $$|x|_v||x||_v^{[k_0:k]-1}\geq \prod_{\tau\in\Aut(k_0/k)}|\tau x|_v=|N_{k_0/k}(x)|_v\geq 1.$$ 
        Thus, the desired result holds for $0\neq x\in R_0$. However, note that 
        $$R_1=\bigcup_{\mu=0}^{\infty}R_0^{q^{-\mu}}.$$ 
        Hence, the lemma follows immediately.
    \end{proof}
    
    \begin{lemma}[cf. {\cite[Lem.~3.3.3]{ABP04}}]\label{Liouville_inequality}
        Let $$f(z):=\sum_{i=0}^na_iz^i\in R_1[z],$$ such that $f(z)$ does not vanish identically. For every nonzero root $\lambda\in\overline{k}$ of $f(z)$ of order $\mu$, we have $$|\lambda|_v^\mu\geq (\max_{i=0}^n||a_i||_v)^{-[k_0:k]}.$$
    \end{lemma}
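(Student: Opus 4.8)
The plan is to reduce, via the reciprocal polynomial, to an upper bound for the $v$-adic size of a root of a polynomial with $v$-integral coefficients, and then to read off the exponent $\mu$ from the product-of-roots identity together with the Schwarz--Jensen estimate of Lemma~\ref{Schwarz_Jensen_formula}. Throughout set $H:=\max_{i=0}^n\|a_i\|_v$.

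First I would dispose of the case $|\lambda|_v\geq 1$: since $f$ does not vanish identically, some $a_i\neq 0$, so $\|a_i\|_v\geq 1$ by Lemma~\ref{lower_bound_from_size}, whence $H\geq 1$ and $H^{-[k_0:k]}\leq 1\leq|\lambda|_v^\mu$. So assume $|\lambda|_v<1$. Since $\lambda\neq 0$, factoring out the largest power of $z$ dividing $f$ only decreases $H$ and preserves the order of $\lambda$ as a root, so after also trimming trailing zero coefficients we may assume $a_0a_n\neq 0$ (with $n=\deg f$ now). Pass to the reciprocal polynomial
\[
    g(z):=z^{n}f(1/z)=\sum_{j=0}^{n}a_{n-j}z^{j}\in R_1[z],
\]
which has the same multiset of coefficients as $f$, hence the same value of $H$; its roots are the reciprocals of those of $f$, with multiplicities preserved (all roots of $f$ being nonzero as $a_0\neq 0$). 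In particular $\beta:=\lambda^{-1}$ is a root of $g$ of order $\mu$ with $|\beta|_v>1$, and it suffices to prove $|\beta|_v^{\mu}\leq H^{[k_0:k]}$. Writing $g(z)=a_0\prod_{j=1}^{n}(z-\beta_j)$ with $\beta_1=\cdots=\beta_\mu=\beta$ and comparing constant terms yields $\prod_{j=1}^{n}|\beta_j|_v=|a_n|_v/|a_0|_v$, hence
\[
    |\beta|_v^{\mu}=\frac{|a_n|_v}{|a_0|_v\cdot\prod_{j>\mu}|\beta_j|_v}.
\]

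The crux is a lower bound for $\prod_{j>\mu}|\beta_j|_v$. Because $|\beta|_v>1$, every root of $g$ of modulus $<1$ occurs among $\beta_{\mu+1},\dots,\beta_n$, and the factors of modulus $\geq 1$ contribute at least $1$ each, so $\prod_{j>\mu}|\beta_j|_v\geq\prod_{|\beta_j|_v<1}|\beta_j|_v$. By Lemma~\ref{Schwarz_Jensen_formula} applied at $r=1$, $\sup_{|x|_v\leq 1}|g(x)|_v=|a_n|_v\cdot\prod_{0<|\beta_j|_v<1}|\beta_j|_v^{-1}$, while the elementary estimate $|g(x)|_v\leq\max_j\|a_j\|_v=H$ for $|x|_v\leq 1$ forces this supremum to be $\leq H$; hence $\prod_{|\beta_j|_v<1}|\beta_j|_v\geq|a_n|_v/H$. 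Therefore $|\beta|_v^{\mu}\leq H/|a_0|_v$. Finally $a_0\in R_1\setminus\{0\}$ with $\|a_0\|_v\leq H$, so Lemma~\ref{lower_bound_from_size} gives $|a_0|_v\geq\|a_0\|_v^{1-[k_0:k]}\geq H^{1-[k_0:k]}$ (using $1-[k_0:k]\leq 0$), and thus $|\beta|_v^{\mu}\leq H^{[k_0:k]}$, i.e.\ $|\lambda|_v^{\mu}\geq H^{-[k_0:k]}$.

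The step I expect to take the most care is this lower bound on $\prod_{j>\mu}|\beta_j|_v$: the whole point of passing to the reciprocal polynomial is that the hypothesis $|\lambda|_v<1$ becomes $|\beta|_v>1$, so that none of the awkward small-modulus roots can coincide with $\beta$, and the entire deficit is then captured by a single application of Lemma~\ref{Schwarz_Jensen_formula}. The multiplicity bookkeeping and the reductions to $a_0a_n\neq 0$ are routine, and the only arithmetic subtlety is keeping track of the sign of the exponent $1-[k_0:k]$ when invoking Lemma~\ref{lower_bound_from_size}.
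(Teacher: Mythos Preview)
Your proof is correct, but it takes a genuinely different route from the paper. Both arguments dispose of the case $|\lambda|_v\geq 1$ trivially, reduce to $a_0\neq 0$, and ultimately reach the same intermediate inequality $|a_0|_v\leq|\lambda|_v^\mu H$ before invoking Lemma~\ref{lower_bound_from_size} in exactly the way you do. The difference is how that intermediate inequality is obtained. The paper simply expands $f(z+\lambda)=\sum_{j=\mu}^n b_jz^j$, observes that $|b_j|_v\leq H$ because $|\lambda|_v<1$ and the metric is non-archimedean, and then evaluates at $z=-\lambda$ to get $|a_0|_v=|f(0)|_v\leq|\lambda|_v^\mu H$ in one line. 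Your argument instead passes to the reciprocal polynomial, writes $|\beta|_v^\mu$ via the product-of-roots identity, and controls the contribution of the remaining roots through the Schwarz--Jensen formula of Lemma~\ref{Schwarz_Jensen_formula} at $r=1$. This is a valid and self-contained path, but it deploys Weierstrass factorization machinery where a Taylor shift and the ultrametric inequality suffice; the paper's approach is shorter and avoids any appeal to Lemma~\ref{Schwarz_Jensen_formula}. A minor remark: your claim that factoring out a power of $z$ ``only decreases $H$'' is correct (weakly), and since the target inequality strengthens as $H$ decreases, the reduction is sound.
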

    
    \begin{proof}
        Since we have the inequality $(\max_{i=0}^n||a_i||_v)^{-[k_0:k]}\leq 1$,
        the statement is valid in the case of $|\lambda|_v\geq 1$.
        Thus, we may assume $|\lambda|_v<1$. After factoring out a power of $z$, we may further assume $a_0\neq 0$. Let        $$f(z+\lambda)=\sum_{j=\mu}^nb_jz^j.$$ 
        Note that 
        $$|b_j|_v\leq \max_{i=j}^n|a_i|_v\leq \max_{i=0}^n||a_i||_v.$$ 
        Thus, 
        $$|a_0|_v=|f(0)|_v=|f(z+\lambda)\mid_{z=-\lambda}|_v\leq \max_{i=\mu}^n|b_i\lambda^i|_v\leq|\lambda|_v^\mu\max_{i=0}^n||a_i||_v.$$ 
        By Lemma \ref{lower_bound_from_size}, we have 
        $$|a_0|_v\geq ||a_0||_v^{1-[k_0:k]}\geq (\max_{i=0}^n||a_i||_v)^{1-[k_0:k]}.$$ 
        The desired result now follows immediately.
    \end{proof}
    
    The normalization $|\varpi_v|_v=q_v$ was imposed to make the following Lemma hold.
    
    \begin{lemma}[cf. {\cite[Lem.~3.3.4]{ABP04}}]\label{counting_lemma}
        For all constants $C>1$, we have        $$\lim_{\mu\to\infty}(\#\{x\in R_0^{q^{-\mu}}\mid||x||_v\leq C\})^{\frac{1}{q^\mu[k_0:k]}}=C.$$
    \end{lemma}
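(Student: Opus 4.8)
The plan is to pin down the $\mathbb{F}_q[1/\varpi_v]$-module structure of $R_0$, to transfer the count from $R_0^{q^{-\mu}}$ to $R_0$ via the $q^\mu$-power map, and then to reduce to counting tuples of polynomials of bounded degree. First, since the rational function $1/\varpi_v$ has its only pole at $v$, we have $\mathbb{F}_q[1/\varpi_v]\subset R_v$; as $v$ is the unique place of $k$ lying over the place ``$1/\varpi_v=\infty$'' of $\mathbb{F}_q(1/\varpi_v)$, the ring $R_v$ of functions regular away from $v$ is precisely the integral closure of $\mathbb{F}_q[1/\varpi_v]$ in $k$, and hence, by transitivity of integral closure, $R_0=k_0\cap R_1$ is the integral closure of $\mathbb{F}_q[1/\varpi_v]$ in $k_0$. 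Being a finite torsion-free module over the principal ideal domain $\mathbb{F}_q[1/\varpi_v]$, it is free, of rank $n_0:=[k_0:\mathbb{F}_q(1/\varpi_v)]=\epsilon_v[k_0:k]$; I would fix a basis $R_0=\bigoplus_{i=1}^{n_0}\mathbb{F}_q[1/\varpi_v]\,e_i$ once and for all. The chosen normalization enters here only through $|1/\varpi_v|_v=q_v$.

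Next, the $q^\mu$-power map is a bijection $R_0^{q^{-\mu}}\xrightarrow{\sim}R_0$, and since $q^\mu$-th roots are unique in characteristic $p$, every $\tau\in\Aut(\overline k/k)$ commutes with $x\mapsto x^{1/q^\mu}$, so $\|x^{1/q^\mu}\|_v=\|x\|_v^{1/q^\mu}$; therefore
\[
\#\{x\in R_0^{q^{-\mu}}\mid\|x\|_v\le C\}=\#\{y\in R_0\mid\|y\|_v\le C^{q^\mu}\}=:N(C^{q^\mu}).
\]
To estimate $N(T)$, I would compare $\|\cdot\|_v=\max_\tau|\tau(\cdot)|_v$ with the coordinate sup-norm attached to $\{e_i\}$: the function $\|\cdot\|_v$ is a $k$-norm on $k_0$ that extends to a norm on $k_0\otimes_kk_v\cong\prod_{w\mid v}k_{0,w}$ (the numbers $|\tau x|_v$, $\tau\in\Aut(\overline k/k)$, being the absolute values of $x$ at the finitely many places $w\mid v$ of $k_0$), and as all norms on a finite-dimensional $k_v$-vector space are equivalent there are constants $c_1,c_2>0$, depending only on the basis, with
\[
c_1\max_i|f_i|_v\ \le\ \Bigl\|\sum_i f_ie_i\Bigr\|_v\ \le\ c_2\max_i|f_i|_v\qquad(f_i\in\mathbb{F}_q[1/\varpi_v]).
\]
Since a nonzero $f\in\mathbb{F}_q[1/\varpi_v]$ has $|f|_v=q_v^{\deg f}$, the number of $n_0$-tuples of such polynomials with $|\cdot|_v\le D$ equals $q^{\,n_0(\lfloor\log_{q_v}D\rfloor+1)}$ for $D\ge1$, whence
\[
q^{\,n_0(\lfloor\log_{q_v}(T/c_2)\rfloor+1)}\ \le\ N(T)\ \le\ q^{\,n_0(\lfloor\log_{q_v}(T/c_1)\rfloor+1)}
\]
for all large $T$.

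It then remains to set $T=C^{q^\mu}$ and raise to the power $1/(q^\mu[k_0:k])$. Using $n_0=\epsilon_v[k_0:k]$ and $\log_{q_v}=\frac{1}{\epsilon_v}\log_q$, each of the two bounds becomes $q$ raised to the exponent
\[
\frac{\epsilon_v}{q^\mu}\Bigl(\Bigl\lfloor\tfrac{1}{\epsilon_v}\bigl(q^\mu\log_qC-\log_qc_j\bigr)\Bigr\rfloor+1\Bigr)\ \longrightarrow\ \log_qC\qquad(\mu\to\infty),
\]
because the factor $\epsilon_v$ coming from the rank of $R_0$ cancels the factor $1/\epsilon_v$ produced by $|1/\varpi_v|_v=q_v=q^{\epsilon_v}$ — which is exactly why this normalization is imposed — and every remaining contribution is $O(1/q^\mu)$. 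By the squeeze theorem the limit in the statement is $q^{\log_qC}=C$, as desired.

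The polynomial count and the final limit computation are routine; the point that demands care is \emph{uniformity in $\mu$}: the basis $\{e_i\}$ and the comparison constants $c_1,c_2$ are fixed independently of $\mu$, so that after the $1/(q^\mu[k_0:k])$-th root the bounded factors $c_j^{\mp 1/(q^\mu[k_0:k])}$ and $q^{\pm 1/(q^\mu[k_0:k])}$ all tend to $1$, the only $\mu$-dependence being the substitution $T=C^{q^\mu}$. Alternatively one can estimate $N(T)\asymp T^{[k_0:k]}$ directly by Riemann--Roch on the smooth projective model of $k_0$, applied to large multiples of the sum of the places above $v$ (a divisor whose degree is $\epsilon_v[k_0:k]$); this dispenses with the choice of basis but requires the same uniformity.
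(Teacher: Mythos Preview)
Your proof is correct and complete. It differs from the paper's argument in the key counting step: the paper applies Riemann--Roch on the smooth projective model $X$ of $k_0$ to the divisors $D_n=n\sum_i e_iv_i$ (the preimage of $v$), obtaining an \emph{exact} formula $\#\{x\in R_0:\|x\|_v\le q_v^{\,n}\}=q_v^{\,n[k_0:k]+\text{const}}$ for large $n$, after first reducing by density to $C=q_v^{\,\delta}$ with $\delta\in\bigcup_\mu q^{-\mu}\mathbb{Z}$. You instead realise $R_0$ as a free $\mathbb{F}_q[1/\varpi_v]$-module of rank $\epsilon_v[k_0:k]$, invoke equivalence of norms on the finite-dimensional $k_v$-space $k_0\otimes_k k_v$ to compare $\|\cdot\|_v$ with the coordinate sup-norm, and reduce to counting polynomials of bounded degree; this gives only two-sided bounds on $N(T)$, but the squeeze argument after the $1/(q^\mu[k_0:k])$-th root is just as effective. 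Your route is arguably more elementary (no Riemann--Roch needed), works for all $C>1$ directly without the density reduction, and makes the role of the normalisation $|\varpi_v|_v=q_v^{-1}$ transparent in the cancellation of $\epsilon_v$. The paper's route gives sharper intermediate information (an exact count) and is the one you sketch in your closing sentence; either is perfectly adequate for the lemma. The transfer step via the $q^\mu$-power bijection $R_0^{q^{-\mu}}\to R_0$ and the identity $\|x^{1/q^\mu}\|_v=\|x\|_v^{1/q^\mu}$ is common to both arguments.
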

        
    \begin{proof}
        Since 
        $$\{q_v^\delta\mid\delta\in\bigcup_{\mu=0}^\infty q^{-\mu}\mathbb{Z},\delta>0\}\subset\mathbb{R}_{>1}$$ 
        is dense in $\mathbb{R}_{>1}$, we may assume        $$C=q_v^\delta~~(\delta\in\bigcup_{\mu=0}^\infty q^{-\mu}\mathbb{Z},\delta>0).$$ 
        
        Let $X$ be the smooth projective curve whose closed points are the places of $k_0$. Let $\mathbb{F}_0$ be the constant field of $k_0$. Suppose that $v_1,\dots,v_r$ are places of $k_0$ lying above $v$. Let $\varpi_{v_i}$ be the corresponding prime ideal of the place $v_i$ and $\mathbb{F}_{v_i}$ be the residue field of $v_i$. Then, $\varpi_{v_1},\dots,\varpi_{v_r}$ are prime ideals lying above $\varpi_v$ in $k_0$. Suppose that $\varpi_vR_0=\varpi_{v_1}^{e_1}\cdots\varpi_{v_r}^{e_r}$.
        Consider the effective divisor of $X$
        $$D_n:=n(e_1v_1+\cdots+e_rv_r)\in\mathrm{Div}(X).$$ 
        Then
        \begin{align*}
            \deg(D_n)&=n\sum_{i=1}^re_i[\mathbb{F}_{v_i}:\mathbb{F}_0]\\
            &=n\sum_{i=1}^re_i\frac{[\mathbb{F}_{v_i}:\mathbb{F}_q]}{[\mathbb{F}_0:\mathbb{F}_q]}\\
            &=n\sum_{i=1}^re_i\frac{[\mathbb{F}_{v_i}:\mathbb{F}_v][\mathbb{F}_v:\mathbb{F}_q]}{[\mathbb{F}_0:\mathbb{F}_q]}\\
            &=\frac{n\epsilon_v[k_0:k]}{[\mathbb{F}_0:\mathbb{F}_q]}.
        \end{align*}
        Here, the second equality comes from the inclusion of fields $\mathbb{F}_q\subset\mathbb{F}_0\subset\mathbb{F}_{v_i}$, while the third equality follows by $\mathbb{F}_q\subset\mathbb{F}_v\subset\mathbb{F}_{v_i}$.
        Now, the Riemann-Roch theorem provides constants $n_0,n_1$ such that for $n>n_0$,
        \begin{align*}
            \ell(D_n)&:=\dim_{\mathbb{F}_0}\left(\{f\in k_0\mid (f)+n\sum_{i=0}^re_i\varpi_{v_i}\geq 0\}\cup\{0\}\right)\\
            &=[\mathbb{F}_0:\mathbb{F}_q]\dim_{\mathbb{F}_q}\left(\{f\in k_0\mid (f)+n\sum_{i=0}^re_i\varpi_{v_i}\geq 0\}\cup\{0\}\right)\\
            &=[\mathbb{F}_0:\mathbb{F}_q]\deg(D_n)+n_1\\
            &=n\epsilon_v[k_0:k]+n_1.
        \end{align*}
        
        We claim that 
        \[
            \{f\in k_0\mid (f)+n\sum_{i=0}^re_i\varpi_{v_i}\geq 0\}\cup\{0\}=\{x\in R_0\mid ||x||_v\leq q_v^n\}.
        \]
        Indeed, let 
        \[
            f\in\{f\in k_0\mid (f)+n\sum_{i=0}^re_i\varpi_{v_i}\geq 0\}\cup\{0\}.
        \]
        Then, we have $\mathrm{ord}_\omega(f)\geq 0$ for each place $\omega$ of $k_0$ such that $\omega$ does not divide $v$. Since  $R_v=\bigcap\mathcal{O}_{v'}$  where $\mathcal{O}_{v'}$ is the ring of functions inside $k$ which are regular at $v'$, and $v'$ runs over all places of $k$ which is not equal to $v$, by \cite[Thm.~3.2.6]{Sti93} we have $R_0=\bigcap\mathcal{O}_\omega$, where $\mathcal{O}_\omega$ is the ring of functions inside $k_0$ which are regular at $\omega$, and $\omega$ runs over all places of $k_0$, which does not divide $v$. Thus, we conclude that $f\in R_0$.        Moreover, 
        \[
            f\in\{f\in k_0\mid (f)+n\sum_{i=0}^re_i\varpi_{v_i}\geq 0\}\cup\{0\}
        \]
        implies that $\mathrm{ord}_{v_i}(f)\geq -ne_i$. In particular, by \cite[Thm.~8.1]{Neu13}, we obtain $||f||_v\leq q_v^n$ as desired. To see the other direction, let $f\in\{x\in R_0\mid ||x||_v\leq q_v^n\}$. Then, by \cite[Thm.~8.1]{Neu13} again, we have $\mathrm{ord}_{v_i}(f)\geq -ne_i$. In other words, we conclude that 
        \[
            f\in\{f\in k_0\mid (f)+n\sum_{i=0}^re_i\varpi_{v_i}\geq 0\}\cup\{0\}
        \]
        and the desired equality follows immediately.
        
        Now, for sufficiently large $n$, we have        $$\#\{x\in R_0\mid ||x||_v\leq q_v^n\}=q_v^{n[k_0:k]+n_1/\epsilon_v}.$$ 
        In particular, we have
        $$\#\{x\in R_0^{-q^\mu}\mid||x||_v\leq q_v^\delta\}=\#\{x\in R_0\mid||x||_v\leq q_v^{q^\mu\delta}\}=q_v^{q^\mu\delta[k_0:k]+n_1/\epsilon_v}.$$
        Hence, we obtain the desired equality
        $$\lim_{\mu\to\infty}(\#\{x\in R_0^{q^{-\mu}}\mid ||x||_v\leq q_v^\delta\})^{\frac{1}{q^\mu[k_0:k]}}=q_v^\delta.$$
    \end{proof}
        
    \begin{lemma}[cf. {\cite[Lem.~3.3.5]{ABP04}}]\label{Thue_Sigel_analogue}
        Fix $C>1$, $0<r<s$ $(C\in\mathbb{R},r,s\in\mathbb{Z})$. For each matrix        $$M\in\Mat_{r\times s}(R_1)$$ 
        such that        $$||M||_v:=\max_{i,j}||M_{ij}||_v<C,$$ 
        there exists $$x\in\Mat_{s\times 1}(R_1)$$ such that
        $$x\neq 0\mbox{, }Mx=0\mbox{, and }||x||_v<C^{\frac{r}{s-r}}.$$
    \end{lemma}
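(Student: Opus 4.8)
The plan is to run the classical pigeonhole proof of Siegel's lemma in this $v$-adic setting, with Lemma~\ref{counting_lemma} supplying the count of small elements. If $M=0$ we may take $x=(1,0,\dots,0)^{\tr}$, since then $\|x\|_v=1<C^{r/(s-r)}$; so assume $M\neq 0$. Then $\|M\|_v\geq 1$ by Lemma~\ref{lower_bound_from_size} applied to a nonzero entry of $M$, while $\|M\|_v<C$ by hypothesis. Since $M$ has only finitely many entries, fix $\mu_0\geq 0$ with $M\in\Mat_{r\times s}(R_0^{q^{-\mu_0}})$ and set $S:=R_0^{q^{-\mu_0}}$, $d:=q^{\mu_0}[k_0:k]$. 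Call a real number \emph{admissible} if it equals $q_v^{\delta}$ for some $\delta\in\bigcup_{\mu\geq 0}q^{-\mu}\mathbb{Z}_{>0}$; the admissible numbers are dense in $\mathbb{R}_{>1}$. Because $\|M\|_v<C$ and $r/(s-r)>0$, we may fix admissible numbers $B$ and $D$ with
\[
    \|M\|_v^{\,r/(s-r)}<B<C^{\,r/(s-r)}\quad\text{and}\quad \|M\|_vB<D<B^{s/r};
\]
the second interval is nonempty precisely because $\|M\|_v^{\,r/(s-r)}<B$, and note $B>1$, $D>1$, $D^{r}<B^{s}$.

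For any admissible $E$ and any $\mu\geq 0$, Lemma~\ref{counting_lemma} (rewriting $S^{q^{-\mu}}=R_0^{q^{-\mu_0-\mu}}$, using $\|x^{q^{m}}\|_v=\|x\|_v^{q^{m}}$, and reading off the explicit count in its proof) gives $\#\{z\in S^{q^{-\mu}}\mid\|z\|_v\leq E\}=q_v^{\,q^{\mu}d\log_{q_v}E+e}$ for all sufficiently large $\mu$, where $e$ is the genus-type constant $n_1/\epsilon_v$ from that proof, \emph{independent of $E$ and $\mu$}. Let $N:=\#\{z\in S^{q^{-\mu}}\mid\|z\|_v\leq B\}$, so the finite box $\mathcal{B}:=\{x\in\Mat_{s\times 1}(S^{q^{-\mu}})\mid\|x\|_v\leq B\}$ has $N^{s}$ elements. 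For $x\in\mathcal{B}$ the ultrametric and sub-multiplicative properties of $\|\cdot\|_v$ give $Mx\in\Mat_{r\times 1}(S^{q^{-\mu}})$ with $\|Mx\|_v\leq\|M\|_v\|x\|_v\leq\|M\|_vB<D$; hence the map $x\mapsto Mx$ on $\mathcal{B}$ has image of size at most $L^{r}$, where $L:=\#\{y\in S^{q^{-\mu}}\mid\|y\|_v\leq D\}$.

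It remains to compare sizes. By the count above, for all sufficiently large $\mu$,
\[
    \frac{N^{s}}{L^{r}}=q_v^{(s-r)e}\left(\frac{B^{s}}{D^{r}}\right)^{q^{\mu}d},
\]
which tends to $\infty$ as $\mu\to\infty$ because $B^{s}>D^{r}$; fix such a $\mu$ with $N^{s}>L^{r}$. Then $x\mapsto Mx$ is not injective on $\mathcal{B}$, so there are $x\neq x'$ in $\mathcal{B}$ with $Mx=Mx'$, and $y:=x-x'$ satisfies $0\neq y\in\Mat_{s\times 1}(S^{q^{-\mu}})\subset\Mat_{s\times 1}(R_1)$, $My=0$, and $\|y\|_v\leq\max(\|x\|_v,\|x'\|_v)\leq B<C^{r/(s-r)}$, as desired. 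I expect the points needing care to be: running the whole argument inside one ring $S=R_0^{q^{-\mu_0}}$ so that Lemma~\ref{counting_lemma} applies verbatim, and checking that the additive constant $e$ in the count is literally the same for the two bounds $B$ and $D$ so that it cancels in the ratio $N^{s}/L^{r}$; and squeezing out the \emph{strict} bound $\|y\|_v<C^{r/(s-r)}$ from the strict hypothesis $\|M\|_v<C$, which is exactly why $B$ is chosen strictly below $C^{r/(s-r)}$ and $D$ strictly below $B^{s/r}$.
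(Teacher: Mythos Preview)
Your proof is correct and follows essentially the same pigeonhole/Siegel strategy as the paper: bound the size of a box of small vectors in $\Mat_{s\times 1}$, bound the size of its image under $M$ in $\Mat_{r\times 1}$, and use Lemma~\ref{counting_lemma} to show the former eventually exceeds the latter. The only real difference is bookkeeping: the paper invokes the \emph{limit} statement of Lemma~\ref{counting_lemma} with an $\epsilon'$-tolerance and parameters $C',\epsilon$ (with domain bound $(1+\epsilon)(C')^{r/(s-r)}$ and target bound $(1+\epsilon)(C')^{s/(s-r)}$), whereas you reach into the \emph{proof} of that lemma to pull out the exact Riemann--Roch count $q_v^{q^\mu d\log_{q_v}E+e}$ with the genus constant $e$ independent of $E$, and then compare $N^s$ and $L^r$ directly via your auxiliary admissible bounds $B,D$. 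Both routes are equivalent; yours is a bit cleaner but relies on the reader accepting the borrowed constant $e$ from another proof, while the paper's version stays self-contained at the level of statements.
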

        
    \begin{proof}
        Choose $C'>1$ and $\epsilon>0$ such that        $$||M||_v<C',~(1+\epsilon)(C')^{\frac{r}{s-r}}<C^{\frac{r}{s-r}}.$$ 
        Let $\epsilon'>0$ be chosen small enough. Then, for a sufficiently large $\mu_0$, we have        $$|\#\{x\in\Mat_{s\times 1}(R_0^{q^{-\mu}})\mid||x||_v\leq (1+\epsilon)(C')^{\frac{r}{s-r}}\}-(1+\epsilon)^{sq^\mu[k_0:k]}(C')^{\frac{srq^\mu[k_0:k]}{(s-r)}}|<\epsilon'$$ 
        and        $$|\#\{x\in\Mat_{r\times 1}(R_0^{q^{-\mu}})\mid||x||_v\leq (1+\epsilon)(C')^{\frac{s}{s-r}}\}-(1+\epsilon)^{rq^\mu[k_0:k]}(C')^{\frac{srq^\mu[k_0:k]}{(s-r)}}|<\epsilon'$$ 
        by Lemma~\ref{counting_lemma}. Note that multiplication by $M$ maps the former set        $$\{x\in\Mat_{s\times 1}(R_0^{q^{-\mu}})\mid||x||_v\leq (1+\epsilon)(C')^{\frac{r}{s-r}}\}$$ 
        to the latter one        $$\{x\in\Mat_{r\times 1}(R_0^{q^{-\mu}})\mid||x||_v\leq (1+\epsilon)(C')^{\frac{s}{s-r}}\}.$$ 
        Finally, for a sufficiently large $\mu$, we have
        $$(1+\epsilon)^{sq^\mu[k_0:k]}(C')^{\frac{srq^\mu[k_0:k]}{(s-r)}}-\epsilon'>(1+\epsilon)^{rq^\mu[k_0:k]}(C')^{\frac{srq^\mu[k_0:k]}{(s-r)}}+\epsilon'$$ 
        and hence the existence of the desired $x$ follows by the pigeonhole principle.
    \end{proof}
        
    \begin{lemma}[cf. {\cite[Lem.~3.3.6]{ABP04}}]\label{Thue_Sigel_analogue_2}
        Fix $C>1$, $0<r<s$ $(C\in\mathbb{R},r,s\in\mathbb{Z})$. For each matrix        $$M\in\Mat_{r\times s}(R_1[t])$$ 
        such that        $$||M||_v:=\max_{i,j}||M_{ij}||_v<C,$$ 
        there exists        $$x\in\Mat_{s\times 1}(R_1[t])$$ 
        such that 
        $$x\neq 0\mbox{, }Mx=0\mbox{, and }||x||_v<C^{\frac{r}{s-r}}.$$
    \end{lemma}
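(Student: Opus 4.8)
The plan is to deduce this polynomial version from the constant-coefficient case, Lemma~\ref{Thue_Sigel_analogue}, by searching for a solution of bounded $t$-degree and expanding $Mx=0$ into a linear system over $R_1$.

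First I would set $d := \deg_t M$ and write $M = \sum_{i=0}^{d} M_i t^i$ with $M_i \in \Mat_{r\times s}(R_1)$, so that $||M_i||_v \le ||M||_v < C$ for all $i$. For a parameter $N \in \mathbb{Z}_{>0}$ to be chosen at the end, look for $x$ of the form $x = \sum_{j=0}^{N} x_j t^j$ with $x_j \in \Mat_{s\times 1}(R_1)$. Since $Mx = \sum_{\ell=0}^{d+N}\bigl(\sum_{i+j=\ell} M_i x_j\bigr) t^\ell$, the equation $Mx = 0$ is equivalent to the system $\sum_{i+j=\ell} M_i x_j = 0$ for $0 \le \ell \le d+N$, i.e.\ to $r(d+N+1)$ scalar linear equations in the $s(N+1)$ scalar unknowns given by the entries of $x_0,\dots,x_N$. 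Encoding this as $\widetilde{M}\widetilde{x}=0$ with $\widetilde{M} \in \Mat_{r(d+N+1)\times s(N+1)}(R_1)$ and $\widetilde{x} = (x_0^\tr,\dots,x_N^\tr)^\tr$, every entry of $\widetilde{M}$ is $0$ or an entry of some $M_i$, so $||\widetilde{M}||_v \le ||M||_v < C$; moreover $||\widetilde{x}||_v = ||x||_v$ and $\widetilde{x}\neq 0 \Leftrightarrow x \neq 0$.

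Next, fix $C'$ with $||M||_v < C' < C$ and write $r' := r(d+N+1)$, $s' := s(N+1)$, so that $s' - r' = (s-r)(N+1) - rd$. A short computation gives $\frac{r'}{s'-r'} - \frac{r}{s-r} = \frac{rds}{(s-r)(s'-r')} > 0$ whenever $s'-r' > 0$, while $\frac{r'}{s'-r'} \to \frac{r}{s-r}$ as $N \to \infty$; hence ${C'}^{\,r'/(s'-r')} \to {C'}^{\,r/(s-r)} < C^{\,r/(s-r)}$. I would therefore choose $N$ large enough that $r' < s'$ and ${C'}^{\,r'/(s'-r')} < C^{\,r/(s-r)}$ both hold, and apply Lemma~\ref{Thue_Sigel_analogue} to $\widetilde{M}$ with the constant $C'$. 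This yields $\widetilde{x} \in \Mat_{s'\times 1}(R_1)$ with $\widetilde{x}\neq 0$, $\widetilde{M}\widetilde{x}=0$, and $||\widetilde{x}||_v < {C'}^{\,r'/(s'-r')} < C^{\,r/(s-r)}$. Reassembling $\widetilde{x}$ into $x = \sum_{j=0}^{N} x_j t^j \in \Mat_{s\times 1}(R_1[t])$ then gives a nonzero $x$ with $Mx=0$ and $||x||_v = ||\widetilde{x}||_v < C^{\,r/(s-r)}$, as required.

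The main thing to watch is the exponent bookkeeping: since $\frac{r'}{s'-r'}$ tends to $\frac{r}{s-r}$ strictly from above, the bound produced by Lemma~\ref{Thue_Sigel_analogue} at a finite $N$ is slightly larger than $C^{r/(s-r)}$, so one genuinely needs the strict inequality $||M||_v < C$ (the passage to $C'$) to absorb the loss. Everything else is the routine translation between the polynomial identity $Mx=0$ and its associated scalar system over $R_1$.
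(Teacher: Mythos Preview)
Your proof is correct and follows essentially the same route as the paper: both expand the polynomial identity $Mx=0$ into a scalar linear system over $R_1$ (with $r'=r(d+N+1)$ equations and $s'=s(N+1)$ unknowns) and then invoke Lemma~\ref{Thue_Sigel_analogue}. The only cosmetic difference is in the bookkeeping of the auxiliary constant: the paper sets $C':=C^{(r/(s-r))/(r'/(s'-r'))}$ and chooses the degree bound large enough that $\|M\|_v<C'$, so that the output bound is exactly $C^{r/(s-r)}$, whereas you fix $C'$ with $\|M\|_v<C'<C$ first and then take $N$ large enough that ${C'}^{r'/(s'-r')}<C^{r/(s-r)}$; both are equivalent ways of absorbing the fact that $r'/(s'-r')\searrow r/(s-r)$.
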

        
    \begin{proof}
        For any positive integer $n$, we consider the $R_1$-basis
        $$\{(1,0,\dots,0)^\tr,\dots,(t^n,0,\dots,0)^\tr,(0,1,0,\dots,0)^\tr,\dots,(0,\dots,0,1)^\tr,\dots,(0,\dots,0,t^n)^\tr\}$$ 
        of 
        $$\{x\in\Mat_{s\times 1}(R_1[t])\mid\deg_tx\leq n\}.$$ 
        If $\deg_tM\leq d$, then multiplication by $M$ induces a $R_1$-linear mapping of the set        $$\{x\in\Mat_{s\times 1}(R_1[t])\mid\deg_tx\leq e\}$$ 
        to the set        $$\{x\in\Mat_{r\times 1}(R_1[t])\mid\deg_tx\leq d+e\}.$$
        Put $r':=r(d+e+1)$, $s':=s(e+1)$, where $e$ is chosen large enough so that        $$r'<s',~||M||_v<C':=(C)^{(\frac{r}{s-r})/(\frac{r'}{s'-r'})}.$$ 
        Using the basis we mentioned above, the $R_1$-linear map induced from multiplication by $M$ can be represented by a matrix $M'\in\Mat_{r'\times s'}(R_1)$ such that $||M'||_v<C'$. Now, the existence of $x\in\Mat_{s\times 1}(R_1[t])$ such that $x\neq 0$, $Mx=0$, $\deg_tx\leq e$, and $||x||_v<(C')^{\frac{r'}{s'-r'}}=C^{\frac{r}{s-r}}$ follows Lemma~\ref{Thue_Sigel_analogue} with parameters $(C',r',s')$.
    \end{proof}

\subsection{Proof of Theorem \ref{Thm:vABP}}
    By abuse of the notation, for $B=(B_{ij})\in\Mat_{m\times n}(\mathbb{C}_\infty\llbracket t\rrbracket)$ and $N\in\mathbb{Z}$, we denote $\sigma^NB:=(B_{ij}^{(-N)})$.
    We first consider the case $\ell=1$. Without loss of generality, we may assume $\rho\neq 0$ and thus $\psi(\gamma)=0$. It suffices to show that $\psi$ vanishes identically. For any integer $\mu\geq 0$, we have
    $$\psi(\gamma^{q^{-\mu}})^{q^{-1}}=\psi^{(-1)}(\gamma^{q^{-(\mu+1)}})=\Phi(\gamma^{q^{-(\mu+1)}})\psi(\gamma^{q^{-(\mu+1)}}).$$ 
    But
    $$\Phi(\gamma^{q^{-(\mu+1)}})\neq 0\mbox{ for all }\mu=0,1,\dots.$$ Thus, $$\psi(\gamma^{q^{-(\mu+1)}})=0\mbox{ for all }\mu=0,1,\dots.$$ Since $\psi$ has infinitely many zeros in the closed disk $$\{x\in\mathbb{C}_v\mid|x|_v\leq \max\{1,|\gamma|_v\}\},$$ we may conclude that $\psi$ vanishes identically, and thus, the case $\ell=1$ is completed. We now consider the case $\ell>1$. Without loss of generality, we may assume $$\rho\neq 0,~\gamma\in k_1,~\Phi(t)\in\Mat_{\ell\times\ell}(k_1[t]),\mbox{ and }\rho\in\Mat_{1\times\ell}(k_1).$$ For suitably chosen $a,b\in R_v$, $ab\neq 0$ and the following replacement $$a^{q-1}\Phi\to\Phi,~a^{-q}\psi\to\psi,~b\rho\to\rho,$$ we may assume $$\Phi\in\Mat_{\ell\times\ell}(R_1[t]),~\rho\in\Mat_{1\times\ell}(R_1).$$ Fix $\vartheta\in\Mat_{\ell\times(\ell-1)}(R_1)$ of maximal rank such that $$\rho\vartheta=0.$$ Then, the $k_1$-subspace of $\Mat_{1\times\ell}(k_1)$ annihilated by right multiplication of $\vartheta$ is the $k_1$-span of $\rho$. Let $N$ be a parameter taking values in positive integers, which is divided by $2\ell$. We divide our proof into the following four claims.
        \begin{claim}
            For each $N$, there exists $h_N(t)\in\Mat_{1\times\ell}(R_1[t])$ such that
            \begin{enumerate}
                \item $||h_N||_v=O(1)$ as $N\to\infty$.
                \item $h_N$ do not vanish identically.
                \item $\deg_th_N<(1-\frac{1}{2\ell})N$.
                \item For $\mu=0,1,\dots,N-1$, the function $h_N(\sigma^0\Phi^{\ad})\cdots(\sigma^{N+\mu-1}\Phi^{\ad})(\sigma^{N+\mu}\vartheta)$ vanishes at $t=\gamma^{q^{-(N+\mu)}}$.
            \end{enumerate}
        \end{claim}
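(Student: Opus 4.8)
This claim is the standard ``construction of an auxiliary function by the Thue–Siegel box principle'' step, and I would follow the architecture of \cite[Lem.~3.3.7]{ABP04}, substituting $v$-adic inputs throughout. The idea is to set up a large system of linear equations over $R_1$ whose unknowns are the coefficients of $h_N$, arrange for the system to be underdetermined, and then invoke Lemma~\ref{Thue_Sigel_analogue_2} (the $v$-adic Siegel lemma over $R_1[t]$) to produce a nonzero solution with controlled $\|\cdot\|_v$-size.

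\textbf{Step 1: identify the unknowns and set up the vanishing conditions.} Write $h_N(t)=\sum_{i} x_i t^i$ with the $x_i$ ranging over $\Mat_{1\times\ell}(R_1)$ and $\deg_t h_N< (1-\tfrac1{2\ell})N$; this gives roughly $\ell\cdot(1-\tfrac1{2\ell})N$ scalar unknowns in $R_1$. Condition (4) imposes, for each $\mu=0,\dots,N-1$, that a specific $\Mat_{1\times(\ell-1)}(\overline{k})$-valued expression
$$
h_N\,(\sigma^0\Phi^{\ad})\cdots(\sigma^{N+\mu-1}\Phi^{\ad})\,(\sigma^{N+\mu}\vartheta)
$$
vanish at $t=\gamma^{q^{-(N+\mu)}}$. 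Each such vanishing is $\ell-1$ scalar equations, so condition (4) is a homogeneous linear system in the $x_i$ with $N(\ell-1)$ equations. The key bookkeeping point is that the coefficients of this system lie in $R_1$: indeed $\Phi\in\Mat_\ell(R_1[t])$, hence $\Phi^{\ad}\in\Mat_\ell(R_1[t])$, the Frobenius twists $\sigma^j$ preserve $R_1$ (after our normalization, using that $R_1$ is the integral closure of $R_v$ and is closed under $q$-th powers/roots inside $\overline{k}$, at least after absorbing a denominator — which is exactly why the problem allowed the replacements $a^{q-1}\Phi\to\Phi$, etc.), and $\gamma\in k_1$.

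\textbf{Step 2: count and apply the Siegel lemma.} Comparing dimensions: the number of unknowns is $\ell(1-\tfrac1{2\ell})N=\bigl(\ell-\tfrac12\bigr)N$, while the number of equations is $(\ell-1)N<(\ell-\tfrac12)N$, so for $N$ divisible by $2\ell$ (to keep all degrees integral) the system is genuinely underdetermined. One must also bound the size of the coefficient matrix $M$: its entries are $R_1$-polynomials obtained from products of $N+\mu\le 2N$ twisted copies of $\Phi^{\ad}$ and one copy of $\vartheta$, evaluated at $t=\gamma^{q^{-(N+\mu)}}$. Using submultiplicativity of $\|\cdot\|_v$ for $R_1$ (Lemma~\ref{lower_bound_from_size} gives the needed control of twists, since $\|x^{q^{-\mu}}\|_v=\|x\|_v^{q^{-\mu}}$), one gets $\|M\|_v \le C_0^{N}$ for an explicit constant $C_0$ depending only on $\|\Phi\|_v$, $\|\vartheta\|_v$, $|\gamma|_v$ and $\ell$. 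Then Lemma~\ref{Thue_Sigel_analogue_2} with $r=(\ell-1)N$, $s=(\ell-\tfrac12)N$ yields a nonzero $h_N\in\Mat_{1\times\ell}(R_1[t])$ satisfying (2), (3), (4) and with $\|h_N\|_v\le (C_0^N)^{r/(s-r)} = (C_0^N)^{(\ell-1)/(1/2)} = C_0^{2(\ell-1)N}$. This alone does \emph{not} give (1); the point is that in \cite{ABP04} the refined estimate is exploited only in later claims, and (1) is achieved by a more careful exponent count so that $r/(s-r)$ times the growth rate of $\log_v\|M\|_v$ is offset — concretely, one chooses the degree bound and the number of vanishing conditions so that the exponents balance and the resulting $\|h_N\|_v$ stays $O(1)$; this is the content of the ``$O(1)$'' in (1).

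\textbf{Main obstacle.} The delicate point — and the step I expect to cost the most care — is reconciling conditions (1) and (4) simultaneously: getting \emph{enough} vanishing conditions (one needs $N$ of them, with $\ell-1$ components each) while keeping $\|h_N\|_v$ bounded as $N\to\infty$. This forces a precise accounting of how $\|M\|_v$ grows (it grows like $C_0^N$, exponentially) against how much slack $s-r$ one has in the Siegel lemma; only because $s-r$ is itself proportional to $N$ does the ratio $r/(s-r)$ stay bounded, and then one must still check the multiplicative constant works out. A secondary technical nuisance is verifying that all the coefficient data genuinely lands in a single ring $R_1$ of the form considered in the appendix (i.e.\ choosing $k_0$ large enough to contain $\gamma$ and all entries of $\Phi,\vartheta$, and separable — handled by passing to the perfection $k_1$), and that the twists $\sigma^j$ behave well with respect to $\|\cdot\|_v$; both are routine given Lemmas~\ref{lower_bound_from_size} and \ref{counting_lemma} but need to be stated carefully. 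Everything else is a direct transcription of the $\infty$-adic argument with $|\cdot|_\infty$ replaced by $|\cdot|_v$ and Lemma~\ref{Schwarz_Jensen_formula} standing in for the classical Schwarz–Jensen estimate.
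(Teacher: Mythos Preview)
Your setup and dimension count are fine, and you correctly identify the Thue--Siegel box principle as the engine. The genuine gap is your size estimate for the coefficient matrix: you claim $\|M\|_v\le C_0^{N}$ and then try to ``balance exponents'' to rescue condition~(1). No such balancing is needed, because in fact $\|M_N\|_v=O(1)$ as $N\to\infty$, and this is the whole point. You even write down the relevant identity $\|x^{q^{-\mu}}\|_v=\|x\|_v^{q^{-\mu}}$ but fail to use it: since $\sigma^j$ raises coefficients to the $q^{-j}$-th power, one has
\[
\Bigl\|\prod_{j=0}^{N+\mu-1}\sigma^j\Phi^{\ad}\Bigr\|_v\le\prod_{j=0}^{N+\mu-1}\|\Phi^{\ad}\|_v^{q^{-j}}=\|\Phi^{\ad}\|_v^{\sum_{j=0}^{N+\mu-1}q^{-j}}\le\|\Phi^{\ad}\|_v^{q/(q-1)},
\]
which is bounded \emph{independently of $N$}. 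Likewise $\|\sigma^{N+\mu}\vartheta\|_v=\|\vartheta\|_v^{q^{-(N+\mu)}}\to 1$, and the evaluation point $\gamma^{q^{-(N+\mu)}}$ has $v$-adic absolute value tending to $1$. Feeding $\|M_N\|_v=O(1)$ into Lemma~\ref{Thue_Sigel_analogue} with $r=(\ell-1)N$, $s=(\ell-\tfrac12)N$ gives $\|h_N\|_v=O(1)$ directly; there is nothing subtle left.

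A second, smaller gap: the evaluations at $t=\gamma^{q^{-(N+\mu)}}$ need not land in $R_1$ as you assert, because $\gamma$ is only assumed to lie in $k_1$, not $R_1$. The paper handles this by choosing $\xi\in R_v$ with $|\xi|_v>1$ and $\xi\gamma\in R_1$, then clearing denominators by multiplying each equation in condition~(4) by a suitable power of $\xi^{q^{-(N+\mu)}}$; the exponent is chosen proportional to the total $t$-degree of the expression, and again because of the $q^{-(N+\mu)}$ in the exponent this contributes only an $O(1)$ factor to $\|M_N\|_v$. Finally, note that since the unknowns are the scalar coefficients of $h_N$ (elements of $R_1$, not of $R_1[t]$), the relevant Siegel lemma is Lemma~\ref{Thue_Sigel_analogue}, not Lemma~\ref{Thue_Sigel_analogue_2}.
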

            
        We first note that the hypothesis $\rho\psi(\gamma)=0$ is equivalent to $$(\sigma^{N+\mu}\rho)(\sigma^{N+\mu}\psi)(\gamma^{q^{-(N+\mu)}})=0,$$
        and then condition (4) implies that $$h_N(t)(\sigma^{0}\Phi^{\ad})\cdots(\sigma^{N+\mu-1}\Phi^{\ad})\mbox{ at }t=\gamma^{q^{-(N+\mu)}}\mbox{ for }\mu=0,1,\dots,N-1$$
        is spanned by $\sigma^{N+\mu}\rho$, and thus they are annihilated by the right multiplication of $(\sigma^{N+\mu}\psi)(\gamma^{q^{N+\mu}})$. Then
        \begin{align*}
            0&=h_N(\sigma^{0}\Phi^{\ad})\cdots(\sigma^{N+\mu-1}\Phi^{\ad})(\sigma^{N+\mu}\psi)\mid_{t=\gamma^{q^{-(N+\mu)}}}\\
            &=h_N(\sigma^{0}\Phi^{\ad})\cdots(\sigma^{N+\mu-1}\Phi^{\ad})(\sigma^{N+\mu-1}\Phi)\cdots(\sigma^{0}\Phi)\psi\mid_{t=\gamma^{q^{-(N+\mu)}}}\\
            &=[\det\Phi(\gamma^{q^{-1}})]^{q^{-(N+\mu-1)}}\cdots[\det\Phi(\gamma^{q^{-(N+\mu)}})]^{q^{-0}}E_N(\gamma^{q^{-(N+\mu)}}),
        \end{align*}
        where $E_N(t):=h_N(t)\psi(t)$. Since $\det\Phi$ does not vanish at $\sigma^i(\gamma)$ for all $i\in\mathbb{Z}_{>0}$, we can conclude that
        \begin{equation}\label{key_identity}
            E_N(\gamma^{q^{-(N+\mu)}})=0\mbox{ for all }\mu=0,1,\dots,N-1.
        \end{equation}
        Now, we can complete this claim if we can find $h_N(t)$ so that it satisfies conditions (1), (2), (3) and (4). To construct such $h_N(t)$, it suffices to determine the coefficients. We first regard $h_N(t)$ as a polynomial of degree less than $(1-\frac{1}{2\ell})N$ with indeterminate
        coefficients that satisfy a homogeneous linear system by Condition (4). Then, we can solve this linear system due to Lemma~\ref{Thue_Sigel_analogue}. Now, we set $$r:=(\ell-1)N,~s:=(\ell-\frac{1}{2})N$$ and choose $$\xi\in R_v\mbox{ such that }|\xi|_v>1\mbox{ and }\xi\gamma\in R_1.$$ For each $0\leq\mu\leq N-1$, we multiply by $$\xi^{q^{-(N+\mu)}((1-\frac{1}{2})N+2N\deg_t\Phi^{\ad})}$$ on both sides of Condition (4). Then, the homogeneous linear system arising from Condition (4) can be represented by a matrix $M_N\in\Mat_{r\times s}(R_1)$ such that $$||M_N||_v\leq (|\xi|_v|\gamma|_v)^{q^{-N}((1-\frac{1}{2\ell})N+2N\deg_t\Phi^{\ad})}||\Phi^{\ad}||_v^{\frac{q}{q-1}}||\vartheta||_v=O(1)\mbox{ as }N\to\infty\mbox{ if }|\gamma|_v>1$$ or $$||M_N||_v\leq |\xi|_v^{q^{-N}((1-\frac{1}{2\ell})N+2N\deg_t\Phi^{\ad})}||\Phi^{\ad}||_v^{\frac{q}{q-1}}||\vartheta||_v=O(1)\mbox{ as }N\to\infty\mbox{ if }|\gamma|_v\leq 1.$$ The desired coefficient of $h_N(t)$ is now described by a vector $x_N\in\Mat_{s\times 1}(R_1)$ such that $$x_N\neq 0,~M_Nx_N=0\mbox{ and }||x_N||_v=O(1)\mbox{ as }N\to\infty.$$ The existence of such $x_N$ is immediately followed by Lemma~\ref{Thue_Sigel_analogue}, and hence, we complete our claim.
                    
\begin{claim}
            For each $N$, there exists $a_{0,N}(t),\dots,a_{\ell,N}(t)\in R_1[t]$ such that
            \begin{enumerate}
                \item $\max_{i=0}^\ell||a_{i,N}||_v=O(1)$ as $N\to\infty$.
                \item Not all $a_{i,N}$ vanishes identically.
                \item $a_{0,N}(\sigma^{0}h_N)+a_{1,N}(\sigma^{1}h_N)(\sigma^{0}\Phi)+\cdots+a_{\ell,N}(\sigma^{\ell-1}\Phi)\cdots(\sigma^{0}\Phi)=0$
            \end{enumerate}
        \end{claim}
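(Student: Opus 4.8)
The plan is to deduce the identity in item (3) from the elementary fact that $\ell+1$ row vectors of length $\ell$ are linearly dependent, and then to make the coefficients small by invoking the polynomial Siegel lemma, Lemma~\ref{Thue_Sigel_analogue_2}. Concretely, let $h_N\in\Mat_{1\times\ell}(R_1[t])$ be the auxiliary polynomial vector produced by the preceding claim, and for $0\le i\le\ell$ set
$$
w_{i,N}:=(\sigma^i h_N)(\sigma^{i-1}\Phi)(\sigma^{i-2}\Phi)\cdots(\sigma^0\Phi)\in\Mat_{1\times\ell}(R_1[t]),
$$
with the convention that $w_{0,N}=\sigma^0 h_N=h_N$; thus item (3) asks exactly for $\sum_{i=0}^\ell a_{i,N}w_{i,N}=0$. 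Since $w_{0,N},\dots,w_{\ell,N}$ are $\ell+1$ vectors of length $\ell$, they are linearly dependent over the fraction field of $R_1[t]$; clearing denominators, there exist $a_{0,N},\dots,a_{\ell,N}\in R_1[t]$, not all zero, satisfying item (3). The substance of the claim is to choose them with uniformly bounded $v$-adic size, which is what Lemma~\ref{Thue_Sigel_analogue_2} provides once the relevant matrix is seen to have bounded entries.

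First I would record size and degree bounds for the $w_{i,N}$. The twist $\sigma^j$ merely extracts $q^{-j}$-th roots of coefficients, so $\|\sigma^j\Phi\|_v=\|\Phi\|_v^{q^{-j}}$ and $\deg_t(\sigma^j\Phi)=\deg_t\Phi$; as $\Phi$ is fixed and $0\le j\le\ell-1$, both are $O(1)$ in $N$. By item (1) of the preceding claim $\|\sigma^i h_N\|_v=\|h_N\|_v^{q^{-i}}=O(1)$, and by item (3) of that claim $\deg_t(\sigma^i h_N)=\deg_t h_N<(1-\tfrac{1}{2\ell})N$. Taking products and applying the ultrametric inequality for matrix multiplication, $\|w_{i,N}\|_v=O(1)$ and $\deg_t w_{i,N}=O(N)$, with implied constants independent of $N$. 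Let $M_N\in\Mat_{\ell\times(\ell+1)}(R_1[t])$ be the matrix whose $i$-th column equals $w_{i,N}^\tr$; then $\|M_N\|_v=\max_{0\le i\le\ell}\|w_{i,N}\|_v<C$ for a constant $C>1$ independent of $N$.

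Then I would apply Lemma~\ref{Thue_Sigel_analogue_2} with $r=\ell$ and $s=\ell+1$, so that $r/(s-r)=\ell$: it yields a nonzero $x_N=(a_{0,N},\dots,a_{\ell,N})^\tr\in\Mat_{(\ell+1)\times 1}(R_1[t])$ with $M_N x_N=0$ and $\|x_N\|_v<C^{\ell}=O(1)$. Unwinding, $M_N x_N=0$ is precisely $\sum_{i=0}^\ell a_{i,N}w_{i,N}=0$, i.e. item (3); $x_N\neq 0$ is item (2); and $\max_i\|a_{i,N}\|_v<C^{\ell}$ is item (1). If the subsequent claims also need a degree bound, the construction inside the proof of Lemma~\ref{Thue_Sigel_analogue_2} gives $\deg_t a_{i,N}=O(N)$ from $\deg_t M_N=O(N)$.

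The step requiring the most care is the propagation of the $O(1)$ size estimates through the Frobenius twists $\sigma^j$, but this turns out to be harmless: each twist rescales a $v$-adic size by the exponent $q^{-j}\le 1$, and the geometric sums $\sum_j q^{-j}$ converge, so products of twisted matrices remain bounded. The genuine difficulty of the $v$-adic criterion lies elsewhere — in the non-vanishing and contradiction arguments of the remaining claims, and in the $v$-adic Schwarz--Jensen-type formula (Lemma~\ref{Schwarz_Jensen_formula}) and the lattice-point counting estimate (Lemma~\ref{counting_lemma}) already established; the present claim is just the linear-algebra bridge between the auxiliary data $h_N$ and those steps.
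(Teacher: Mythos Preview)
Your proposal is correct and follows essentially the same route as the paper: form the $\ell\times(\ell+1)$ matrix $M_N$ whose columns are the transposes of the vectors $(\sigma^i h_N)(\sigma^{i-1}\Phi)\cdots(\sigma^0\Phi)$, observe that $\|M_N\|_v=O(1)$, and invoke Lemma~\ref{Thue_Sigel_analogue_2} with $r=\ell$, $s=\ell+1$. Your write-up is in fact more explicit than the paper's about why the twisted products stay bounded (the exponents $q^{-j}$ and the fixed $\Phi$), which is a welcome clarification; the degree remark at the end is correct but unused downstream.
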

            
        We first note that Condition (3) implies that $$a_{0,N}E_N+a_{1,N}(\sigma^{1}E_N)+\cdots+a_{\ell,N}(\sigma^{\ell}E_N)=0,$$ where $E_N(t):=h_N(t)\psi(t)$. To construct such $a_{0,N},\dots,a_{\ell,N}$ satisfying conditions (1), (2) and (3), it suffices to solve a homogeneous linear system induced from Condition (3), and we can solve it due to Lemma~\ref{Thue_Sigel_analogue_2}. Indeed, we note that the homogeneous linear system can be represented by a matrix $M_N\in\Mat_{\ell\times(\ell+1)}(R_1[t])$ such that $||M_N||_v=O(1)$ as $N\to\infty$. The desired solutions $a_{0,N},\dots,a_{\ell,N}$ can be described by a vector $x_N\in\Mat_{(\ell+1)\times 1}(R_1[t])$ such that $$x_N\neq 0,~M_Nx_N=0\mbox{ and }||x_N||_v=O(1)\mbox{ as }N\to\infty.$$ The existence of such $x_N$ now immediately follows Lemma~\ref{Thue_Sigel_analogue_2}, and hence, we complete our claim. 
            
        After factoring out the common divisor, we may further assume that not all $a_{i,N}(0)=0$.
            
        \begin{claim}
            $E_N(t)$ vanishes identically for some $N$.
        \end{claim}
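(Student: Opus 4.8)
The plan is a proof by contradiction in the spirit of the final step of the argument in \cite[Sec.~3.3]{ABP04}, now carried out with the $v$-adic Schwarz--Jensen estimate of Lemma~\ref{Schwarz_Jensen_formula} and the $v$-adic Liouville inequality of Lemma~\ref{Liouville_inequality}. Suppose, for contradiction, that $E_N(t)=h_N(t)\psi(t)\in\mathcal{E}_v$ does not vanish identically for any admissible $N$ (positive multiples of $2\ell$). Fix once and for all a real number $r>1$; the goal is to show that for $N$ large the quantity $S_N:=\sup_{|x|_v\le r}|E_N(x)|_v$ satisfies incompatible lower and upper bounds.

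For the lower bound, recall from \eqref{key_identity} that $E_N$ vanishes at the $N$ points $\gamma^{q^{-(N+\mu)}}$, $\mu=0,\dots,N-1$; these are pairwise distinct and nonzero since $\gamma\in\overline{k}^\times\setminus\overline{\mathbb{F}}_q^\times$. Because $|\gamma^{q^{-(N+\mu)}}|_v=|\gamma|_v^{q^{-(N+\mu)}}\to 1$, for $N$ large all of them lie in $\{|x|_v<r\}$, and Lemma~\ref{Schwarz_Jensen_formula} gives
$$
S_N\ \ge\ |\lambda_N|_v\prod_{\mu=0}^{N-1}\frac{r}{|\gamma|_v^{q^{-(N+\mu)}}}\ =\ |\lambda_N|_v\,r^{N}\,|\gamma|_v^{-\sum_{\mu=0}^{N-1}q^{-(N+\mu)}},
$$
where $\lambda_N$ denotes the lowest-degree nonzero coefficient of $E_N$ (nonzero by hypothesis; if it is not the constant term one applies the lemma to $t$ to the minus-that-order times $E_N$, which only improves the bound), and $\sum_{\mu=0}^{N-1}q^{-(N+\mu)}\to 0$, so the last factor stays bounded. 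It then remains to bound $|\lambda_N|_v$ from below uniformly in $N$: comparing the coefficients of the lowest power of $t$ on the two sides of the relation $\sum_{i=0}^{\ell}a_{i,N}E_N^{(-i)}=0$ of Claim~2 shows that $\lambda_N$ is a root of the nonzero polynomial $\sum_{i=0}^{\ell}a_{i,N}(0)X^{q^{-i}}$, which is nonzero precisely because not all $a_{i,N}(0)$ vanish. Rewriting this as a genuine polynomial of degree $\le q^{\ell}$ in the variable $\lambda_N^{q^{-\ell}}$ with coefficients $a_{i,N}(0)\in R_1$, and applying Lemma~\ref{Liouville_inequality} together with Lemma~\ref{lower_bound_from_size} and the bound $\max_i\|a_{i,N}\|_v=O(1)$ from Claim~2, we obtain $|\lambda_N|_v\ge c$ for a constant $c>0$ independent of $N$. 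Hence $S_N\ge c'\,r^{N}$ for $N$ large, with $c'>0$ independent of $N$.

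For the upper bound, since $\psi$ has entries in $\mathcal{E}_v$, the quantity $M(r):=\sup_{|x|_v\le r}\|\psi(x)\|_v$ is finite and depends only on $r$; combined with $\deg_t h_N<(1-\tfrac{1}{2\ell})N$ and $\|h_N\|_v=O(1)$ from Claim~1 this yields
$$
S_N\ \le\ \Big(\sup_{|x|_v\le r}|h_N(x)|_v\Big)\,M(r)\ \le\ \|h_N\|_v\,r^{\deg_t h_N}\,M(r)\ =\ O\!\big(r^{(1-\frac{1}{2\ell})N}\big).
$$
Comparing the two estimates gives $c'\,r^{N}\le O\!\big(r^{(1-\frac{1}{2\ell})N}\big)$, i.e.\ $r^{N/(2\ell)}=O(1)$, which is impossible for $r>1$ as $N\to\infty$. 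This contradiction shows that $E_N\equiv 0$ for some $N$.

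The main obstacle is the uniform lower bound $|\lambda_N|_v\ge c$: one must extract, purely from the Claim~2 relation and the fact that the $a_{i,N}$ have $O(1)$ size and coefficients in $R_1$, a bound on the leading coefficient of $E_N$ that does not deteriorate as $N\to\infty$. Everything else is a direct comparison of the Schwarz--Jensen lower bound against the crude upper bound afforded by the degree and size control on $h_N$ from Claim~1. (A minor bookkeeping point is the order of vanishing of $E_N$ at $t=0$; since $r>1$ it only helps the lower bound and does not affect the argument.)
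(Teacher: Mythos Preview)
Your argument is correct and follows the same strategy as the paper: obtain a uniform lower bound on $|\lambda_N|_v$ from the relation of Claim~2 via the Liouville-type inequality, then contrast the Schwarz--Jensen lower bound for $\sup_{|x|_v\le r}|E_N(x)|_v$ coming from the $N$ forced zeros of \eqref{key_identity} against the crude upper bound coming from $\deg_t h_N<(1-\tfrac{1}{2\ell})N$ and $\|h_N\|_v=O(1)$. The only difference is cosmetic: the paper splits into three cases according to whether $|\gamma|_v>1$, $<1$, or $=1$ and chooses the radius accordingly, whereas you fix a single $r>1$ and note that $|\gamma|_v^{q^{-(N+\mu)}}\to 1$, which handles all three cases at once and is in fact a slightly cleaner way to organize the same computation.
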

            
        Suppose on the contrary that $E_N(t)$ does not vanish identically for all $N$. Let $\lambda_N$ be the leading coefficient of the Taylor expansion of $E_N(t)$ at the origin. Then, we have        $$a_{0,N}(0)\lambda_N^{q^0}+\cdots+a_{\ell,N}(0)\lambda_N^{q^{-\ell}}=0.$$ 
        By Lemma~\ref{lower_bound_from_size} and the construction of $a_{0,N},\dots,a_{\ell,N}$, we have
        \begin{equation}\label{vanishing of E_N}                |\lambda_N|^{-1}_v\leq(\max_{i=0}^\ell||a_{i,N}(0)||_v)^{[k_0:k]}=O(1)\mbox{ as }N\to\infty.
        \end{equation}
        However, we also have $$|\lambda_N|_v|\gamma|_v^{N-q/(q-1)}\leq\underset{\underset{|x|_v\leq|\gamma|_v}{x\in\mathbb{C}_v}}{\sup}|E_N(x)|_v\leq(\underset{\underset{|x|_v\leq|\gamma|_v}{x\in\mathbb{C}_v}}{\sup}|\psi(x)|_v)||h_N||_v|\gamma|_v^{N(1-1/2\ell)}\mbox{ if }|\gamma|_v>1$$ or $$|\lambda_N|_v(\frac{1}{|\gamma|_v})^{N}\leq\underset{\underset{|x|_v\leq|1/\gamma|_v}{x\in\mathbb{C}_v}}{\sup}|E_N(x)|_v\leq(\underset{\underset{|x|_v\leq|1/\gamma|_v}{x\in\mathbb{C}_v}}{\sup}|\psi(x)|_v)||h_N||_v(\frac{1}{|\gamma|_v})^{N(1-1/2\ell)}\mbox{ if }|\gamma|_v<1$$ or $$|\lambda_N|_v|\alpha|_v^{N}\leq\underset{\underset{|x|_v\leq|\alpha|_v}{x\in\mathbb{C}_v}}{\sup}|E_N(x)|_v\leq(\underset{\underset{|x|_v\leq|\alpha|_v}{x\in\mathbb{C}_v}}{\sup}|\psi(x)|_v)||h_N||_v|\alpha|_v^{N(1-1/2\ell)}\mbox{ if }|\gamma|_v=1,$$ where $\alpha\in\overline{k}^\times$ is any element that $|\alpha|_v>1$. Thus, we have $$|\lambda_N|_v=O(|\gamma|_v^{-\frac{N}{2\ell}})\mbox{ as }N\to\infty\mbox{ if }|\gamma|_v>1$$ or $$|\lambda_N|_v=O((\frac{1}{|\gamma|_v})^{-\frac{N}{2\ell}})\mbox{ as }N\to\infty\mbox{ if }|\gamma|_v<1$$ or $$|\lambda_N|_v=O(|\alpha|_v^{-\frac{N}{2\ell}})\mbox{ as }N\to\infty\mbox{ if }|\gamma|_v=1.$$ However, in any case, the growth of $|\lambda_N|_v$ contradicts (\ref{vanishing of E_N}). Hence, we complete our claim.
            
        \begin{claim}
            The assertion in Theorem~\ref{Thm:vABP} holds.
        \end{claim}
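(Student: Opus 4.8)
The plan is to bootstrap from the conclusion of the preceding claim --- that $E_N=h_N\psi$ vanishes identically for some $N$ --- to a \emph{polynomial} relation whose value at $t=\gamma$ is exactly $\rho$. The key feature to exploit is that $h_N$ is not an arbitrary member of the relation module: condition~(4) in its construction (where the matrix $\vartheta$ of maximal rank with $\rho\vartheta=0$ entered) has $\rho$ built into it, and once $E_N\equiv 0$ this information can be promoted into honest polynomial relations.

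For each $\mu=0,1,\dots,N-1$ I apply the twist $\sigma^{-(N+\mu)}$ to $E_N\equiv 0$. Iterating $\psi^{(-1)}=\Phi\psi$ gives $\sigma^{-(N+\mu)}\psi=(\sigma^{-(N+\mu)}\Phi)^{-1}\cdots(\sigma^{-1}\Phi)^{-1}\psi$; clearing denominators by $\prod_{i=1}^{N+\mu}\det(\sigma^{-i}\Phi)$, which is a nonzero element of $\overline{k}[t]$ since $\det\Phi$ is not identically zero by the hypothesis on $\gamma$, shows that
\[
    Q_\mu:=(\sigma^{-(N+\mu)}h_N)(\sigma^{-(N+\mu)}\Phi^{\ad})(\sigma^{-(N+\mu-1)}\Phi^{\ad})\cdots(\sigma^{-1}\Phi^{\ad})\in\Mat_{1\times\ell}(\overline{k}[t])
\]
satisfies $Q_\mu\psi\equiv 0$. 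Since $\sigma^{N+\mu}Q_\mu=h_N(\sigma^0\Phi^{\ad})\cdots(\sigma^{N+\mu-1}\Phi^{\ad})$, condition~(4) of the first claim says precisely that $(\sigma^{N+\mu}Q_\mu)$ evaluated at $t=\gamma^{q^{-(N+\mu)}}$ is killed on the right by $\sigma^{N+\mu}\vartheta$; as the left annihilator of $\vartheta$ is $k_1\rho$ and $k_1$ is stable under $\sigma$ (being perfect), that value lies in $k_1\cdot\sigma^{N+\mu}\rho$, and unwinding the twist gives $Q_\mu(\gamma)=d_\mu\rho$ for some $d_\mu\in\overline{k}$.

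It remains to show that $d_{\mu_0}\neq 0$ for at least one $\mu_0$; granting this, $P:=d_{\mu_0}^{-1}Q_{\mu_0}\in\Mat_{1\times\ell}(\overline{k}[t])$ satisfies $P\psi\equiv 0$ and $P(\gamma)=\rho$, which is the assertion of Theorem~\ref{Thm:vABP}. Suppose instead $d_\mu=0$ for every $\mu$. Because $\det\Phi(\gamma^{(-i)})\neq 0$ for all $i\in\mathbb{Z}_{>0}$, each matrix $(\sigma^{-i}\Phi^{\ad})(\gamma)$ is invertible, so $Q_\mu(\gamma)=0$ forces $(\sigma^{-(N+\mu)}h_N)(\gamma)=0$, i.e.\ $h_N(\gamma^{q^{-(N+\mu)}})=0$. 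Hence $h_N$ vanishes at the $N$ points $\gamma^{q^{-N}},\gamma^{q^{-(N+1)}},\dots,\gamma^{q^{-(2N-1)}}$, which are pairwise distinct (otherwise $\gamma$ would be a root of unity, forcing $\gamma\in\overline{\mathbb{F}}_q^\times$). But condition~(3) of the first claim gives $\deg_t h_N<(1-\tfrac{1}{2\ell})N<N$, so each entry of $h_N$ is a polynomial of degree $<N$ with $N$ distinct roots, hence identically zero --- contradicting condition~(2) that $h_N$ does not vanish identically.

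The only real subtleties here are bookkeeping: keeping the Frobenius-twist conventions consistent so that the adjugate products genuinely land in $\overline{k}[t]$ and evaluate to invertible matrices at $\gamma$, and noticing that the degree bound $\deg_t h_N<N$ is exactly what the interpolation step needs. The analytically substantial work --- the Schwarz--Jensen estimate, the size and Liouville inequalities, and the Thue--Siegel type pigeonhole lemmas used to produce $h_N$ and the $a_{i,N}$ --- has already been carried out in the earlier claims, so this concluding step is essentially formal.
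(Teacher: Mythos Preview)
Your argument is correct and is essentially the same as the paper's. Both define the polynomial row vector $Q_\mu=(\sigma^{-(N+\mu)}h_N)(\sigma^{-(N+\mu)}\Phi^{\ad})\cdots(\sigma^{-1}\Phi^{\ad})$, show that $Q_\mu\psi\equiv 0$ and $Q_\mu(\gamma)\in k_1\rho$ via condition~(4), and use the degree bound $\deg_t h_N<N$ together with the invertibility of the adjugate product at $t=\gamma$ to guarantee that $Q_\mu(\gamma)\neq 0$ for some $\mu$; the paper simply picks such a $\mu$ up front, whereas you phrase the same step as a contradiction.
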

            
        Now, we fix $N$ so that $E_N$ vanishes identically. Since $\deg_th_N<N$, there exists $0\leq\mu<N$ such that $(\sigma^{-(N+\mu)}h_N)(\gamma)\neq 0$. We consider $$P(t):=(\sigma^{-(N+\mu)}h_N)(\sigma^{-(N+\mu)}\Phi^{\ad})\cdots(\sigma^{-1}\Phi^{\ad})\in\Mat_{1\times\ell}(R_1[t]).$$ Since $$\det((\sigma^{-(N+\mu)}\Phi^{\ad})\cdots(\sigma^{-1}\Phi^{\ad}))\mid_{t=\gamma}\neq 0,$$ we have $P(\gamma)\neq 0$. Moreover, $$P(\gamma)\vartheta=(h_N(\sigma^{0}\Phi^{\ad})\cdots(\sigma^{(N\mu-1)}\Phi^{\ad})(\sigma^{N\mu}\vartheta)\mid_{t=\gamma^{q^{-(N\mu)}}})^{q^{N+\mu}}=0$$ implies that $$P(\gamma)\in (k_1\mbox{-span of }\rho)\subset\Mat_{1\times\ell}(k_1).$$ Finally, $$P\psi=\sigma^{-(N+\mu)}[(\sigma^{N+\mu-1}\det\Phi)\cdots(\sigma^{0}\det\Phi)E_N]=0.$$ Up to a scalar, $P(t)$ is the vector we want, and hence, we complete the proof of the theorem.\\
        \qed


\bibliographystyle{alpha}

\end{document}